\documentclass[12pt]{amsart}

\setlength{\parskip}{6pt}
\setlength{\parindent}{0pt}

\usepackage{tgpagella}
\usepackage{euler}
\usepackage[T1]{fontenc}
\usepackage{amsmath, amssymb}
\usepackage[hidelinks]{hyperref}
\usepackage[english]{babel}
\usepackage{mathrsfs}
\usepackage{eucal}
\usepackage[all]{xy}
\usepackage{tikz}
\usepackage{stmaryrd}

\usepackage{todonotes}


\newtheorem{thm}{Theorem}[section]
\newtheorem*{thm*}{Theorem}
\newtheorem{lem}[thm]{Lemma}
\newtheorem{fact}[thm]{Fact}

\newtheorem{prop}[thm]{Proposition}
\newtheorem*{prop*}{Proposition}

\newtheorem{cor}[thm]{Corollary}
\newtheorem*{cor*}{Corollary}

\theoremstyle{definition}
\newtheorem{defn}[thm]{Definition}
\newtheorem*{defn*}{Definition}

\newtheorem{question}[thm]{Question}
\newtheorem{example}[thm]{Example}
\newtheorem{examples}[thm]{Examples}

\newtheorem*{question*}{Question}
\newtheorem*{Pquestion*}{Popa's question}

\newtheorem*{conv*}{Convention}

\newcommand{\dminus}{ 
\buildrel\textstyle\ .\over{\hbox{ 
\vrule height3pt depth0pt width0pt}{\smash-} 
}}

\def\bb{\mathbb}

\def\de{\delta}

\def\Om{\Omega}
\def\bb{\mathbb}

\def\cal{\mathcal}

\def\BorI{\mathcal{B}([0,1))}

\makeatletter

\def\dotminussym#1#2{%
  \setbox0=\hbox{$\m@th#1-$}%
  \kern.5\wd0%
  \hbox to 0pt{\hss\hbox{$\m@th#1-$}\hss}%
  \raise.6\ht0\hbox to 0pt{\hss$\m@th#1.$\hss}%
  \kern.5\wd0}

\def \mbor{\mathcal{M}^{[0,1)}}

\textwidth 5.75in
\oddsidemargin 0.375in
\evensidemargin 0.375in

\title{Computable presentations of randomizations}
\author{Nicol\'as Cuervo Ovalle and Isaac Goldbring}

\address{Department of Mathematics\\ Los Andes University, Bogotá, Colombia,
Cra. 1 \#18a-12}
\email{n.cuervo10@uniandes.edu.co}
\thanks{The first named author was partially supported by NSF grant DMS-2054477. He would also like to thank the UC Irvine Department of Mathematics for their hospitality.}

\address{Department of Mathematics\\University of California, Irvine, 340 Rowland Hall (Bldg.\# 400),
Irvine, CA 92697-3875}
\email{isaac@math.uci.edu}
\urladdr{http://www.math.uci.edu/~isaac}
\thanks{The second-named author was partially supported by NSF grant DMS-2054477.}

\begin{document}

 \begin{abstract}
We initiate the effective metric structure theory of Keisler randomizations.  We show that a classical countable structure $\cal M$ has a decidable presentation if and only if its Borel randomization $\mbor$ has a computable presentation for which the constant functions are uniformly computable points.  We determine a sufficient condition for which the uniform computability of the constant functions can be dropped.  We show that when $\cal M$ is effectively $\omega$-categorical, then $\mbor$ is computably categorical, that is, has a unique computable presentation up to computable isomorphism.  A special case of this result is that the unique separable atomless probability algebra is computably categorical.  Finally, we show that all randomizations admit effective quantifier elimination.
 \end{abstract}
\maketitle

\section{Introduction}

In \cite{keisler}, Keisler introduced the notion of randomizing a (classical) structure $\cal M$, which is a structure whose elements are viewed as $\cal M$-valued random variables.  Keisler's initial construction was presented in classical logic; the construction was later transported by Ben Yaacov and Keisler \cite{BYK} into the context of continuous logic.  In \cite{benyaacov}, Ben Yaacov showed how to further generalize the construction to allow for randomizations of continuous theories as well.  

Randomizations are known to preserve many important model-theoretic phenomena, such as countable categoricity, stability (but not simplicity), $\omega$-stability, NIP, etc... (see \cite{benyaacov2} for the last statement).  In this article, we investigate to what extent the randomization construction interacts nicely with ideas from \emph{computable structure theory}.  More precisely, we investigate the question as to whether or not there is a connection between a (countable, classical) structure $\mathcal{M}$ having a \emph{computable presentation} and its \emph{Borel randomization} $\mbor$ having a computable presentation.  We now roughly define the italicized terms in the previous sentence; precise definitions will be given in the next section.

Given a countable (classical) structure $\mathcal{M}$, a presentation of $\mathcal{M}$ is a function $\nu:\bb N\to \cal M$, which we think of as a labeling of the elements of $\mathcal{M}$ (perhaps with repetitions).  The presentation is computable if, roughly speaking, the operations and predicates on $\mathcal{M}$ lift to computable functions and subsets of $\bb N$.  An analogous definition of presentation can be given to a separable metric structure, with the difference being that the presentation labels a countable set that generates a dense subset of the structure; such points are said to be the \emph{special points} of the presentation while the points in the pre-substructure that they generate are said to be \emph{generated points}.  To say that this presentation is computable amounts to the fact that (again, roughly speaking) one can compute generated points that approximate operations applied to generated points (and a similar statement for predicates).

Given a classical countable structure $\mathcal{M}$, its \emph{Borel randomization} $\mbor$ is the set of function $f:[0,1)\to \mathcal{M}$ such that, for each $a\in \mathcal{M}$, we have that $f^{-1}(a)$ is a Borel subset of $[0,1)$.  We consider the Borel randomization  as a structure (technically speaking, pre-structure) in a two-sorted continuous language, with one sort for the $\mathcal{M}$-valued random variables in the previous sentence and another for the probability algebra of $[0,1)$; for each first-order formula $\varphi(\vec x)$ with $\vec x=(x_1,\ldots,x_n)$, there is a function symbol $\llbracket \varphi(\cdot)\rrbracket$ which maps a tuple $\vec f$ of random variables to the event $\llbracket \varphi(\vec f)\rrbracket:=\{\omega \in [0,1) \ : \ \mathcal{M}\models \varphi(\vec f(\omega))\}$.  The pseudometric on the random variable sorts is given by $d(f,g):=\mu\llbracket f\not=g\rrbracket$ and the pseudometric on the probability algebra is the usual one given by $d(A,B):=\mu(A\triangle B)$.  The completion of this pre-structure is then a metric structure, which we also refer to as $\mbor$.

The na\"ive guess might be that $\mathcal{M}$ has a computable presentation if and only if $\mbor$ has a computable presentation.\footnote{Throughout this introduction, we assume that $\cal M$ is a structure in a computable language; this assumption will be stated explicitly in the next section as well.}  However, such a na\"ive statement presents two issues.  First of all, since a computable presentation of $\mbor$ can approximate the measure of the event $\llbracket \varphi(\vec f)\rrbracket$, where $\varphi(\vec x)$ is any formula (which  may have quantifiers), it seems likely that any conclusion drawn about a presentation of $\mathcal{M}$ would show that it is not only computable, but rather \emph{decidable}, which means that one can effectively determine the truth of any formula with parameters in $\mathcal{M}$ (and not just the quantifier-free ones, whose truth can be verified relative to a computable presentation).  Since there are classical structures that admit a computable presentation but no decidable presentation, it seems more sensible to ask if $\mathcal{M}$ has a decidable presentation if and only if $\mbor$ has a computable presentation.  Another (but related) reason for changing the question is that any computable presentation of a randomization is automatically decidable.  The reason for this is that randomizations have effective quantifier elimination (see Section \ref{appendix1}), which essentially follows from the atomic formulae in the randomization language having classifical quantifiers embedded in them. 

Considering the modified question, in one direction, given a decidable presentation of $\mathcal{M}$, we show how to construct a natural ``induced'' presentation on $\mbor$, which we then show is computable.  However, in the converse direction, it is not clear to us how to extract a presentation (decidable or otherwise) of $\mathcal{M}$ from an arbitrary presentation of $\mbor$.  That being said, if we somehow have ``computable access'' to the constant functions in $\mbor$, then we can indeed recover a decidable presentation of $\mathcal{M}$.  Our main result can thus roughly be stated as follows:

\begin{thm*}
For any classical, countable structure $\mathcal{M}$, we have that $\mathcal{M}$ has a decidable presentation if and only if $\mbor$ has a computable presentation for which the constant functions are uniformly computable points.
\end{thm*}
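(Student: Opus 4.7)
The plan is to prove both directions by exhibiting the relevant presentations explicitly and then verifying computability.

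For the forward direction, fix a decidable presentation $\nu \colon \bb N \to \cal M$. I take as special points on the probability algebra sort the finite unions of half-open dyadic intervals $[p,q) \subseteq [0,1)$, and on the random-variable sort the step functions $f \colon [0,1) \to \cal M$ that are piecewise constant on some finite dyadic partition of $[0,1)$ with each value in $\nu(\bb N)$. Each such special point admits a finite encoding indexed by $\bb N$, and standard arguments yield density in $\mbor$. Computability of the resulting presentation then reduces to uniform computability of $\mu\llbracket \varphi(\vec f)\rrbracket$ over formulas $\varphi$ of the classical language and tuples $\vec f$ of special points. Passing to a common dyadic refinement $\{I_1,\dots,I_m\}$ on which each coordinate of $\vec f$ is constant with value $\nu(\vec n_k)$ on $I_k$, one has
\[
\llbracket \varphi(\vec f) \rrbracket \;=\; \bigsqcup_{k \,:\, \cal M \models \varphi(\nu(\vec n_k))} I_k,
\]
so decidability of $\nu$ lets us select the relevant indices effectively, after which the measure is a finite sum of lengths of dyadic intervals. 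The remaining computations --- the two pseudometrics, the Boolean operations on the probability algebra sort, and each symbol $\llbracket \varphi(\cdot) \rrbracket$ itself --- reduce to this same fact or are immediate. Finally, for each $n$ the constant function with value $\nu(n)$ is itself a special point, so the family of constants is trivially uniformly computable.

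For the converse, suppose we have a computable presentation of $\mbor$ together with a uniformly computable sequence $(c_n)_{n\in\bb N}$ of computable points realizing exactly the constant functions. Define $\nu \colon \bb N \to \cal M$ by sending $n$ to the value of $c_n$; by hypothesis this surjects onto $\cal M$. For any classical formula $\varphi(\vec x)$ and tuple $\vec n = (n_1,\dots,n_k)$, the event $\llbracket \varphi(c_{n_1},\dots,c_{n_k})\rrbracket$ is either all of $[0,1)$ or empty, according to whether $\cal M \models \varphi(\nu(\vec n))$, so its measure lies in $\{0,1\}$. Since $\llbracket \varphi(\cdot)\rrbracket$ is a function symbol of the randomization language and the given presentation of $\mbor$ is computable, this measure can be approximated to any prescribed precision uniformly in $(\varphi, \vec n)$; computing it to within $1/3$ resolves the dichotomy and yields decidability of $\nu$.

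The main delicacy lies in the forward direction, where a single effective procedure must handle formulas $\varphi$ of unbounded quantifier complexity. This is exactly where the hypothesis of \emph{decidable} (rather than merely computable) presentation of $\cal M$ is used, together with the running assumption that the language is computable, so that one can effectively enumerate the formulas $\varphi$ when indexing the function symbols $\llbracket \varphi(\cdot) \rrbracket$ and feeding them to the decision procedure for $\cal M$.
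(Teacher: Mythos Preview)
Your proposal is correct.  The forward direction is essentially the paper's construction of the \emph{induced presentation}: special points on $\mathcal B$ are finite Boolean combinations of rational (here, dyadic) intervals, special points on $\mathcal K$ are simple functions with values among the generated points of $\mathcal M^\#$, and the key computation of $\mu\llbracket\varphi(\vec f)\rrbracket$ proceeds exactly as in the paper's Proposition on quantifier-free formulae, upgraded to all formulae by decidability of $\mathcal M^\#$.

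Your converse, however, is genuinely more direct than the paper's primary argument.  The paper first reformulates ``the constant functions are uniformly computable'' as \emph{awareness}, then invokes a reduction (its Proposition~\ref{awarewlog}) showing that any aware computable presentation is computably isomorphic to one induced from a presentation of $\mathcal M$; this reduction in turn relies on the computable categoricity of $\BorI$.  Only after this reduction does the paper carry out the $0/1$ dichotomy on $\mu\llbracket\varphi(\vec a)\rrbracket$.  You bypass all of this by working directly with the uniformly computable sequence $(c_n)$ and the Lipschitz continuity of $\llbracket\varphi(\cdot)\rrbracket$.  The paper does sketch a comparable direct alternative immediately after its main proof (approximating constants by centers of small rational balls), and your version is in the same spirit but cleaner.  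What the paper's longer route buys is the extra structural conclusion that every aware computable presentation of $\mbor$ is computably isomorphic to an induced one; your argument yields only the theorem as stated, which is all that is asked.
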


In general, we do not know if the previous theorem is valid without the assumption that the constant functions are uniformly computable points.  However, we will present a condition on a structure $\mathcal{M}$, called being \emph{effectively recognizable}, for which we can remove this assumption.

\begin{cor*}
Suppose that $\mathcal{M}$ is an effectively recognizable structure.  Then $\mathcal{M}$ has a decidable presentation if and only if $\mbor$ has a computable presentation. 
\end{cor*}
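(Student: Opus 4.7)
The forward direction is immediate from the main theorem: given a decidable presentation of $\mathcal{M}$, the induced presentation on $\mbor$ furnished by the main theorem is already a computable presentation, so in particular $\mbor$ has one. The plan for the converse is to use effective recognizability as a tool that upgrades an arbitrary computable presentation of $\mbor$ to one in which the constant functions are uniformly computable points; once this upgrade is carried out, the main theorem applies directly to produce a decidable presentation of $\mathcal{M}$.

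More concretely, suppose $\mbor$ admits a computable presentation $\nu$. I would use effective recognizability to computably search, inside $\nu$, for fast Cauchy sequences of special points converging to each constant function $\hat{a}$. The idea is that effective recognizability should supply a computable family of formulas $\varphi_n(x)$ such that, for every $a \in \mathcal{M}$, specifying the approximate values of $\mu\llbracket \varphi_n(f)\rrbracket$ (to within a prescribed tolerance, on a sufficient finite initial segment of $n$) certifies both that $f$ is within any prescribed distance of some constant function $\hat{b}$ and pins down $b = a$. Since $\nu$ is computable, the predicates $\llbracket \varphi_n(\cdot)\rrbracket$ are uniformly computable on the special points, so this search is effective and will halt for every $a$. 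A standard interleaving and relabeling argument then enlarges the set of special points to include the approximations so produced, yielding a computable presentation of $\mbor$ in which the constant functions are indexed uniformly by indices for elements of $\mathcal{M}$.

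The main obstacle, as I see it, is precisely this location step: one needs effective recognizability to be strong enough that the measures $\mu\llbracket \varphi_n(f)\rrbracket$ at generated points really do distinguish constant functions from all other random variables, uniformly and in a way that can be detected from finite approximations. Verifying that the candidate definition of effectively recognizable has this feature, and that the search terminates correctly on every input, is where I expect the bulk of the work to lie. A secondary (and more routine) issue is that the uniformity in $a$ must be expressed relative to some fixed computable presentation of $\mathcal{M}$; this either comes built into the definition of effectively recognizable, or else must be extracted beforehand by reading off the classical quantifier-free theory realized by constant functions inside $\nu$ (using effective quantifier elimination for randomizations, cited in the introduction).
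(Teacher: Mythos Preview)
Your plan is essentially the paper's own argument. The paper's definition of \emph{effectively recognizable} is exactly what you anticipate: a presentation $\mathcal{M}^\#$ together with an algorithm that, given a generated point $a$, returns an $L$-formula $\varphi_a(x)$ with $\varphi_a(\mathcal{M})=\{a\}$. Since $\varphi_a$ isolates $a$, one has $\mu\llbracket\varphi_a(f)\rrbracket=\mu\{\omega:f(\omega)=a\}=1-d(f,a)$ for any random variable $f$; thus in any computable presentation of $\mbor$ one can approximate $d(f,a)$ directly, and the set of rational balls meeting $\mathcal{M}$ is c.e.\ (equivalently, the constants form a uniformly computable sequence). The main theorem then applies.

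One small simplification relative to your outline: no ``upgrade'' or relabeling of the presentation is needed. The argument shows that the \emph{given} computable presentation is already aware, so you can skip the interleaving step entirely. Your secondary worry about uniformity in $a$ is also resolved by the definition itself: the presentation $\mathcal{M}^\#$ witnessing recognizability is part of the data, so the indexing of the $\varphi_a$ is built in from the start.
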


For any structure $\mathcal{M}$, its expansion $\mathcal{M}_M$ obtained by naming the elements of $\mathcal{M}$ is automatically effectively recognizable.  Since $\mathcal{M}$ has a decidable presentation if and only if $\mathcal{M}_M$ has a decidable presentation, we have:

\begin{cor*}
For any classical, countable structure $\mathcal{M}$, we have that $\mathcal{M}$ has a decidable presentation if and only if $\mathcal{M}_M^{[0,1)}$ has a computable presentation.
\end{cor*}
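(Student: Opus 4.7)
The plan is to obtain this corollary as a formal consequence of the preceding one, applied to the expanded structure $\mathcal{M}_M$ obtained by adjoining to the language a constant symbol $c_a$ for each $a \in \mathcal{M}$. Since countability of $\mathcal{M}$ gives a computable indexing of these new constants, the language of $\mathcal{M}_M$ remains a computable language, so the hypotheses of the previous results are preserved. The argument then has two preparatory steps and a one-line appeal.

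First, I would verify that $\mathcal{M}_M$ is effectively recognizable. Effective recognizability amounts to having a uniformly computable way to identify each element of $\mathcal{M}$ from the presentation; but in $\mathcal{M}_M$ every element is already named by one of the constants $c_a$, and these constants are part of the language and hence uniformly computable in any presentation. So $\mathcal{M}_M$ is effectively recognizable essentially by construction, regardless of how the presentation is chosen. Second, I would check that $\mathcal{M}$ has a decidable presentation if and only if $\mathcal{M}_M$ does. This is essentially tautological: a formula in the language of $\mathcal{M}_M$ is exactly a formula in the original language with parameters from $\mathcal{M}$ (substituting $a$ for $c_a$), and decidability for $\mathcal{M}$ is defined precisely as the ability to compute truth of all such formulas with parameters. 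Thus decidability transfers in both directions.

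With these two facts in hand, applying the previous corollary to $\mathcal{M}_M$ gives that $\mathcal{M}_M$ has a decidable presentation if and only if $\mathcal{M}_M^{[0,1)}$ has a computable presentation, and combining with the decidability equivalence established above yields the claim.

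The only real obstacle is ensuring that naming all elements genuinely produces an effectively recognizable structure in the precise technical sense introduced earlier, but the intuition — that effective recognizability is exactly the property of having computable access to every element, and that naming each element supplies this data for free — makes the verification a matter of unfolding definitions rather than constructing anything. No analytic estimates, no randomization-theoretic constructions, and no approximation arguments are needed beyond those already packaged into the preceding corollary.
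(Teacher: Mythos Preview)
Your proposal is correct and follows exactly the paper's approach: the paper deduces this corollary from the preceding one by noting that $\mathcal{M}_M$ is effectively recognizable (this is listed as Example (2)) and that $\mathcal{M}$ has a decidable presentation if and only if $\mathcal{M}_M$ does. Your elaboration of why these two facts hold is accurate and matches the intended reasoning.
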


As mentioned above, the notions of ``computable presentation'' and ``decidable presenation'' coincide for $\mbor$, whence one may obtain more symmetric statements in the previous two results by replacing ``computable'' by ``decidable.''

We also consider the question as to when $\mbor$ is \emph{computably categorical}, that is, when $\mbor$ has a unique computable presentation (up to computable isomorphism).  In Section \ref{appendix2}, we establish the following:

\begin{thm*}
If $\cal M$ is \emph{effectively $\omega$-categorical, then $\mbor$ is computably categorical.  }
\end{thm*}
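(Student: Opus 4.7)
The plan is to carry out an effective back-and-forth construction in the continuous setting of $\mbor$. Given two computable presentations $\nu_1$ and $\nu_2$ of $\mbor$, I aim to produce a computable isomorphism between them by constructing, at each stage $n$, a partial finite correspondence between special points whose tuples match in type up to error $2^{-n}$, and then passing to the limit.

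The key technical prerequisite is to transfer effective $\omega$-categoricity from $\mathcal{M}$ to the continuous theory of $\mbor$. Ben Yaacov has shown that $\omega$-categoricity is preserved under randomization, so every complete type in $\mbor$ is principal in the continuous sense of admitting $\varepsilon$-isolating formulas. Effectively, I would combine effective isolation of types in $\mathcal{M}$ with the effective quantifier elimination for randomizations (Section \ref{appendix1}) to produce a uniformly computable family of $\varepsilon$-isolating formulas for the principal types of $\mbor$. The point is that atomic randomization formulas reduce to classical formulas whose $\mathcal{M}$-truth can be decided (an effectively $\omega$-categorical structure has a decidable presentation), and the description of randomization types as probability measures on $S_n(\mathrm{Th}(\mathcal{M}))$ allows one to approximate any such type by rational-valued measures supported on the finitely many isolated $n$-types of $\mathcal{M}$.

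With effective isolation in hand, the back-and-forth proceeds along fairly standard lines. At stage $n$, given matching tuples $\bar a$ from $\nu_1$ and $\bar b$ from $\nu_2$, in order to extend by the next special point $c$ of $\nu_1$ I effectively enumerate special points of $\nu_2$ and test, using computable approximations to distances and to values of the $\varepsilon$-isolating formulas, for a $d$ whose type over $\bar b$ matches that of $c$ over $\bar a$ within $2^{-(n+1)}$. Such a $d$ must exist because the special points are dense and the relevant type is approximately isolated. Alternating forth and back steps, and similarly matching special events in the probability algebra sort via computable measure approximations, yields a correspondence on all special points which extends by uniform continuity to the desired computable isomorphism. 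The main obstacle is the effective transfer of $\omega$-categoricity to $\mbor$: one must verify, uniformly in $n$, that the parameterization of randomization types by measures is effective and that a single algorithm produces approximate isolating formulas together with moduli quantifying their quality of isolation. Once this uniformity is established, the remainder of the argument is a continuous adaptation of the classical effective back-and-forth.
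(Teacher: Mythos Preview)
Your overall plan---an effective back-and-forth between two computable presentations---is the right one, and your instinct to exploit effective quantifier elimination together with the finiteness of type spaces in $\mathcal{M}$ is on target. However, the specific mechanism you propose has a coherence gap, and it differs from the paper's cleaner approach in a way worth noting.

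\textbf{The gap.} You propose that at stage $n$ the tuples $\bar a$ and $\bar b$ match in type only up to error $2^{-n}$, and at stage $n+1$ you extend by one point on each side so that the extended tuples match to $2^{-(n+1)}$. But the old portion of the tuple only matched to $2^{-n}$, so there is no reason the extended tuple matches to $2^{-(n+1)}$; you cannot improve the quality of a match already committed to. If instead you allow yourself to revise earlier choices, the partial correspondences are no longer nested, and it is unclear that the limit is a well-defined function, let alone an isometry. Approximate back-and-forth \emph{can} be made to work in continuous logic, but it requires substantially more bookkeeping than ``test for a $d$ matching within $2^{-(n+1)}$ and pass to the limit by uniform continuity.''

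\textbf{What the paper does instead.} The paper avoids approximate matching entirely by producing, at each stage, an \emph{exact} computable realization of the pushed-forward type. The point is that when $\mathcal{M}$ is $\omega$-categorical, a $1$-type $p$ over a tuple $\vec f$ in $\mbor$ is isolated by a single explicit formula
\[
\psi_p(X,\vec f)=\max_{i=1,\ldots,m}\bigl|\mu\llbracket\theta_i(X,\vec f)\rrbracket - r_i\bigr|,
\]
where $\theta_1,\ldots,\theta_m$ isolate the finitely many $(n{+}1)$-types of $\operatorname{Th}(\mathcal{M})$. Effective $\omega$-categoricity is exactly what lets you compute these $\theta_i$ from $n$. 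One then checks (a short direct argument, generalizing the probability-algebra case) that whenever $\psi_p(g,\vec f)<\epsilon/m$ there is a genuine realization $g'$ with $d(g,g')<\epsilon$. This makes the zero set of $\psi_p$ a c.e.\ closed subset of the target presentation, and a general fact from effective metric structure theory says every nonempty c.e.\ closed set contains a computable point whose index is uniformly computable. Feeding that computable point back into the construction gives a genuine partial elementary map at each stage, the maps are nested, and the union extends to an honest computable isomorphism.

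So your high-level plan is right, but the decisive technical move is not ``approximate isolation plus search'' but rather ``exact isolation by a single computable formula, realizations form a c.e.\ closed set, c.e.\ closed sets contain computable points.'' This sidesteps the coherence problem entirely.
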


For the definition of effectively $\omega$-categorical structure, see Definition \ref{effomega}.  The previous theorem includes the special case that $\cal M$ is the two-element set with no further structure, whence we conclude that the unique atomless probability algebra is computably categorical, a result of interest in its own right.\footnote{In fact, we prove this special case first as motivation for the more general argument.}

In the final section, we show that any randomization admits effective quantifier-elimination.  Quantifier-elimination for randomizations was first proven by Keisler in \cite{keisler} in his original presentation of randomizations as structures in first-order logic.  In \cite{BYK}, where randomizations were treated as metric structures, quantifier-elimination was simply claimed without proof (see \cite[Theorem 2.9]{BYK}), and a reference to Keisler's original paper \cite{keisler} was quoted.  Here, we elaborate on how one might carry out Keisler's proof of quantifier-elimination in the continuous setting, while paying attention to effective matters.  Our proof also seems to necessitate the appropriate notion of a definable family of definable subsets of a metric structure, the basics of which we elaborate in the course of proving the theorem.  Since effective quantifier-elimination in continuous logic seems to not have appeared in the literature before, we begin Section \ref{appendix1} by developing some basics facts about the notion.

We assume that the reader is familiar with basic continuous logic; the canonical reference is \cite{BBHU}.  It would help if the reader was familiar with basic effective metric structure theory, although we develop the amount of this material that we need in the following section.

\section{Preliminaries}

\subsection{Randomizations}

Fix a countable first-order language $L$ and a countable $L$-structure $\mathcal{M}$; for simplicity, let us assume that $\mathcal{M}$ is one-sorted.  We define a two-sorted, continuous pre-structure $\mbor$ as follows:
\begin{itemize}
    \item The first sort, which we denote by $\mathcal{K}$, is the set of functions $f:[0,1)\to \mathcal{M}$ such that, for each $a\in \mathcal{M}$, we have that $f^{-1}(\{a\})$ is a Borel subset of $[0,1)$.  We think of such a function as an $\mathcal{M}$-valued random variable.
    \item The second sort is the probability algebra $\mathcal{B}$ associated to the probability space $[0,1)$ equipped with its Lebesgue measure.  We treat this sort as a structure in the usual continuous language for probability algebras, that is, with function symbols for the Boolean operations and a predicate symbol for Lebesgue measure.
    \item For each $L$-formula $\varphi(x_1,\ldots,x_n)$, we include a distinguished function $\mathcal{K}^n\to \mathcal{B}$ given by $$\llbracket\varphi(f_1,\ldots,f_n)\rrbracket:=\{\omega\in [0,1) \ : \ \mathcal{M}\models \varphi(f_1(\omega),\ldots,f_n(\omega)\}.$$
\end{itemize}

We let $L^R$ denote the language for which $\mbor$ is a $L^R$-pre-structure.  We refer to $\mbor$ as the \textbf{Borel randomization of $\mathcal{M}$.}  We will also use $\mbor$ to denote the completed structure and $\cal K$ to denote the corresponding sort of the completion.  We hope that this does not cause the reader any confusion.  \emph{If we have the need to specify that an element of sort $\cal K$ from $\mbor$ belongs to the pre-structure described above, we will say that that element is a \textbf{random variable} from $\mbor$.}  

We will need to recall one important fact about $\mbor$:

\begin{fact}\label{fullness}
    For every $L$-formula $\varphi(x_1,\ldots,x_n,y)$, every $f_1,\ldots,f_n\in \mathcal{K}$, and every $\epsilon>0$, there is $g\in \mathcal{K}$ such that $$\mu(\llbracket \exists y\varphi(f_1,\ldots,f_n,y)\rrbracket\triangle \llbracket\varphi(f_1,\ldots,f_n,g)\rrbracket)<\epsilon.$$
\end{fact}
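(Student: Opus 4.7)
The plan is to construct $g$ pointwise by choosing, for each $\omega\in[0,1)$, an $\mathcal{M}$-valued witness to $\varphi(f_1(\omega),\ldots,f_n(\omega),\cdot)$ whenever one exists, and assigning an arbitrary default value elsewhere. Since $\mathcal{M}$ is countable, a careful selection of these witnesses will yield a Borel function, and in fact will produce exact equality of the two events, so the $\varepsilon$ tolerance in the statement is automatic.

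Concretely, fix an enumeration $\mathcal{M}=\{a_0,a_1,a_2,\ldots\}$ and write $A:=\llbracket \exists y\,\varphi(f_1,\ldots,f_n,y)\rrbracket$. For each $k$ set
$$A_k:=\{\omega\in[0,1):\mathcal{M}\models\varphi(f_1(\omega),\ldots,f_n(\omega),a_k)\}.$$
Each $A_k$ is Borel: it equals the countable union, over tuples $(b_1,\ldots,b_n)\in\mathcal{M}^n$ with $\mathcal{M}\models\varphi(b_1,\ldots,b_n,a_k)$, of the Borel cylinder sets $\bigcap_{i=1}^n f_i^{-1}(\{b_i\})$. By definition, $A=\bigcup_k A_k$. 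Put $B_k:=A_k\setminus\bigcup_{j<k}A_j$, a Borel partition of $A$, and define $g:[0,1)\to\mathcal{M}$ by $g(\omega)=a_k$ on $B_k$ and $g(\omega)=a_0$ on $[0,1)\setminus A$.

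Then $g^{-1}(\{a_k\})$ is Borel for each $k$, so $g\in\mathcal{K}$. For $\omega\in B_k$ we have $\mathcal{M}\models\varphi(\vec f(\omega),g(\omega))$ by construction, so $\omega\in\llbracket\varphi(\vec f,g)\rrbracket$; for $\omega\notin A$, the sentence $\exists y\,\varphi(\vec f(\omega),y)$ fails in $\mathcal{M}$, so in particular $\varphi(\vec f(\omega),a_0)=\varphi(\vec f(\omega),g(\omega))$ fails, hence $\omega\notin\llbracket\varphi(\vec f,g)\rrbracket$. Thus the two events coincide exactly, giving the desired bound (with $0$ in place of $\varepsilon$). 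There is essentially no serious obstacle; the one point to take care with is Borel measurability of the $A_k$, which is precisely where countability of $\mathcal{M}$ is used in an essential way. If one preferred to genuinely use the $\varepsilon$ slack, one could instead truncate to $\bigcup_{k\leq N}A_k$ for $N$ large enough that $\mu\bigl(A\setminus\bigcup_{k\leq N}A_k\bigr)<\varepsilon$ and assign $a_0$ outside, but the construction above is both simpler and stronger.
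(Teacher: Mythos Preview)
The paper does not actually prove this statement: it is recorded as a known \emph{Fact} about Borel randomizations (the property usually called \emph{fullness}) and no argument is given. Your proof is correct and is the standard one: enumerate $\mathcal{M}$, let $g(\omega)$ be the first witness $a_k$ with $\mathcal{M}\models\varphi(\vec f(\omega),a_k)$ (and a default value off $A$), and observe that this yields a Borel function with $\llbracket\varphi(\vec f,g)\rrbracket=\llbracket\exists y\,\varphi(\vec f,y)\rrbracket$ exactly.

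One small point worth making explicit: you treat the $f_i$ as genuine Borel functions and evaluate $f_i(\omega)$ pointwise, whereas the paper uses $\mathcal{K}$ for both the pre-structure of random variables \emph{and} its completion. This is harmless here---every element of the completed sort is represented (modulo a null set) by an actual Borel $\mathcal{M}$-valued function, so one may choose representatives and run your construction; alternatively, the general case follows from the random-variable case by continuity of $\llbracket\cdot\rrbracket$ and a density argument. Your remark that exact equality (rather than $\varepsilon$-closeness) is achievable is correct and is indeed how fullness is typically stated for Borel randomizations.
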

In the sequel, we abuse terminology and view $\mathcal{M}$ as a subset of $\mbor$ by identifying each element of $\mathcal{M}$ with the associated constant random variable in $\mathcal{K}$.

We borrow terminology from measure theory and write $\sum_{i=1}^n a_i\chi_{I_i}$ for the random variable which, for each $i=1,\ldots,n$, is constantly $a_i$ on the Borel set $I_i$ (and where we implicitly assume that the $I_i$'s are pairwise disjoint); we call such an element of $\mathcal{K}$ a \textbf{simple function}.

\subsection{Presentations of structures}

Suppose that $\mathcal{N}$ is a separable metric structure, which, for simplicity, we momentarily assume is one-sorted and of diameter at most $1$.  By a \textbf{presentation} of  $\mathcal{N}$ we mean a countable sequence $(a_j)_{j\in \mathbb{N}}$ from $\mathcal{N}$ that generates $\mathcal{N}$ (that is, the closure of the sequence under the interpretations of function symbols is dense in $\mathcal{N}$).  We may denote a presentation of $\mathcal{N}$ as a pair $\mathcal{N}^{\#}:=(\mathcal{N},(a_j)_{j\in \mathbb{N}})$.  The points $a_j$ are called \textbf{special points of the presentation} and the points in the closure of the special points under the function symbols are called \textbf{generated points of the presentation}.  The definition in the many-sorted case is analogous.  Moreover, we extend the terminology to cover the case of pre-structures (where the metric need not be complete).

Note that the previous definition applies also to the case that $\mathcal{N}$ is a countable classical structure treated as a continuous structure by equipping it with the discrete metric.  In this case, the generated points of the presentation comprise all of $\mathcal{N}$.  

\emph{From now on, when considering presentations of metric structures, we implicitly assume that the signature for $\mathcal{N}$ is computable, whence one can produce an effective enumeration of the generated points of a presentation $\mathcal{N}^{\#}$ of $\mathcal{N}$.}

In connection with the convention just established, note that if $L$ is a computable classical language, then the corresponding randomization language $L^R$ is also computable.

A presentation $\mathcal{N}^\#$ of $\mathcal{N}$ is called \textbf{computable} if there is an algorithm for which, upon input an $n$-ary predicate symbol $P$ of the language, an $n$-tuple $\vec a$ of generated points of $\mathcal{N}^\#$, and rational $\epsilon>0$, returns a real number $r$ such that $|P^{\mathcal{N}}(\vec a)-r|<\epsilon$.

In connection with presentations of randomizations in the next section, we note that the probability algebra associated to $[0,1)$, 
which we denote by $\BorI$, has a ``standard presentation'' which consists of all of open intervals with rational endpoints\footnote{We include an interval of the form $[0,\epsilon)$ as an open interval in $[0,1)$.}; it is clear that this presentation is computable.

\begin{defn}
    Suppose that $\mathcal{N}_1$ and $\mathcal{N}_2$ are two structures with presentations $\mathcal{N}_1^\#$ and $\mathcal{N}_2^\#$ respectively.  An embedding $f:\mathcal{N}_1\to \mathcal{N}_2$ is called \textbf{a computable map from $\mathcal{N}_1^\#$ to $\mathcal{N}_2^\#$} if there is an algorithm such that, upon input a generated point $p$ of $\mathcal{N}_1^\#$ and rational $\epsilon>0$, returns a generated point $q$ of $\mathcal{N}_2^\#$ such that $d(f(p),q)<\epsilon$.\footnote{This is not the official definition of a computable map between presented structures, but is equivalent to the usual definition in our context; see \cite[Section 3.5]{EMST}.}  If, in addition, $f$ is invertible and $f^{-1}$ is a computable map from $\mathcal{N}_2^\#$ to $\mathcal{N}_1^\#$, then we say that $f$ is a \textbf{computable isomorphism}; when a computable isomorphism exists between $\mathcal{N}_1^\#$ and $\mathcal{N}_2^\#$ we say that $\mathcal{N}_1^\#$ and $\mathcal{N}_2^\#$ are \textbf{computably isomorphic}.  
\end{defn}

\begin{defn}
    Suppose that $\mathcal{N}^\#$ is a presentation of $\mathcal{N}$. 
    \begin{enumerate}
        \item A sequence $(p_n)_{n\in \mathbb{N}}$ from $\mathcal{N}$ is called a \textbf{computable sequence of $\mathcal{N}^\#$ } if there is an algorithm such that, upon input $n\in \mathbb{N}$ and rational $\epsilon>0$, returns a generated point $q$ of $\mathcal{N}^\#$ such that $d(p_n,q)<\epsilon$.
        \item A closed subset $X\subseteq \mathcal{N}$ is a \textbf{c.e. closed subset of $\mathcal{N}^\#$} if the set of open balls of the form $B(p;\epsilon)$, with $p$ a generated point and $\epsilon>0$ a rational number, that intersect $X$ is c.e.
    \end{enumerate}
\end{defn}
An open ball as in the second item of the previous definition is called a \textbf{rational open ball of $\mathcal{N}^\#$}.  If $U=B(p;\epsilon)$ and $V=B(q;\delta)$ are two such rational open balls of $\mathcal{N}^\#$, we say that $U$ is \textbf{formally included in} $V$ if $d(p,q)+\epsilon<\delta$.  

Let us call a presentation $\mathcal{N}^\#$ of $\mathcal{N}$ \textbf{weakly computable} if there is an algorithm such that, upon inputs generated points $p$ and $q$ of $\mathcal{N}^\#$and rational $\epsilon>0$, returns a rational number $r$ such that $|d(p,q)-r|<\epsilon$.  Note that in case that $\mathcal{N}$ is a classical structure equipped with the discrete metric, saying that $\mathcal{N}^\#$ is weakly computable is equivalent to saying that the equality relation on the generated points is computable.  Returning to our discussion of rational open balls, we note that if $\mathcal{N}^\#$ is weakly computable, then the formal inclusion relation between rational open balls is c.e.

Following \cite[Section 6]{BBHU}, the continuous functions generated by the unary functions $0$, $1$, and $\frac{x}{2}$ and the binary function $x\dminus y$ by closing under composition are called \textbf{restricted}.  A formula is called restricted if all connectives used in its formation are restricted.

If $\mathcal{N}^\#$ is a computable presentation, then it follows that there is an algorithm such that, upon input a restricted quantifier-free formula $\varphi(\vec x)$, a tuple $\vec f$ of generated points of $\mathcal{N}^\#$, and rational $\epsilon>0$, returns a rational number $r$ such that $|\varphi(\vec f)^{\mathcal{N}}-r|<\epsilon$.  If $\mathcal{N}^\#$ satisfies the assertion of the previous sentence for all restricted formulae (and not just the quantifier-free ones), then we say that $\mathcal{N}^\#$ is a \textbf{decidable presentation of $\mathcal{N}$}.  In case that $\mathcal{N}$ is a classical structure with the discrete metric, this is equivalent to saying that one may decide whether or not $\mathcal{N}\models \varphi(\vec f)$ for any formula $\varphi(\vec x)$ and any generated tuple $\vec f$ of $\mathcal{N}^\#$.

\subsection{Presentations of randomizations}

Fix a presentation $\mathcal M^\#$ of $\mathcal M$.  We obtain an \textbf{induced presentation} $(\mbor)^\#$ of $\mbor$  whose $i^{\text{th}}$ special point is the random variable which is the simple function $\sum_{j=1}^n a_j\chi_{I_j}$, where $i$ is the code for the finite sequence $(i_1,\ldots,i_n,J_1,\ldots,J_n)$, $a_j$ is the $i_j^{\text{th}}$ generated point of $\mathcal{M}^\#$, and where $J_k$ is the $k^{\text{th}}$ generated point of the standard presentation of $\BorI$.  Note that this is indeed a presentation of $\mbor$ as such simple functions are dense in $\mathcal{K}$ and all generated elements of the standard presentation of $\BorI$ are easily seen to be generated elements of $(\mbor)^\#$; in fact, the presentation on $\BorI$ induced by $(\mbor)^\#$ is (computably isomorphic to) the standard presentation of $\BorI$. Note also that, since there are no function symbols with co-domain $\mathcal{K}$, the special points and generated points in $\mathcal{K}$ coincide; in fact, this comment holds for any presentation of $\mbor$.  Finally note that the notions of computable presentation and weakly computable presentation coincide for randomizations as the only predicate symbol $\mu$ is defined in terms of the distance to the bottom element of the probability algebra.


\begin{defn}
We say that a presentation $(\mbor)^\#$ of $\mbor$ is \textbf{aware} if $\cal M$ is a c.e. closed set of $(\mbor)^\#$.
\end{defn}

\begin{lem}\label{inducedaware}
If $\cal M^\#$ is a weakly computable presentation of $\cal M$, then the induced presentation $(\mbor)^\#$ of $\mbor$ is aware.
\end{lem}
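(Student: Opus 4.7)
The plan is to show directly that the set of rational open balls of $(\mbor)^\#$ meeting $\mathcal{M}$ (identified with the constant random variables inside $\mathcal{K}$) is c.e., indeed decidable. Since $L^R$ contains no function symbol with codomain $\mathcal{K}$, every generated point of $(\mbor)^\#$ in the $\mathcal{K}$-sort is one of the prescribed special points, namely a simple function $p = \sum_{j=1}^{n} a_j \chi_{I_j}$ with $a_j$ a generated point of $\mathcal{M}^\#$ and $I_j$ a generated point of the standard presentation of $\BorI$.  The latter generated points are Boolean combinations of rational open intervals, so by a routine refinement we may (and will) normalize so that the $I_j$'s are pairwise disjoint and partition $[0,1)$; each $\mu(I_j)$ is then a rational number that is computable exactly from the code of $p$.

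The first key observation is that, for such a $p$ and any $a \in \mathcal{M}$, the distance from $p$ to the constant random variable $c_a$ is
$$d(p,c_a) \;=\; \mu\llbracket p \neq c_a \rrbracket \;=\; 1 - \sum_{j:\, a_j = a} \mu(I_j).$$
Hence $B(p;\epsilon) \cap \mathcal{M} \neq \emptyset$ iff there is some $a \in \mathcal{M}$ with $\sum_{j:\,a_j=a} \mu(I_j) > 1 - \epsilon$, and since this sum is zero unless $a$ agrees with one of the $a_j$'s, it suffices to consider $a \in \{a_1,\dots,a_n\}$.

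The second ingredient is the observation in the preliminaries that, when $\mathcal{M}$ is classical, weak computability of $\mathcal{M}^\#$ is equivalent to decidability of equality on generated points of $\mathcal{M}^\#$.  Using this, we can partition $\{1,\dots,n\}$ into the equality classes of the $a_j$'s; for each class $C$ we compute the rational number $s_C = \sum_{j \in C} \mu(I_j)$ and check whether $s_C > 1 - \epsilon$.  If some $s_C$ exceeds $1-\epsilon$, then $B(p;\epsilon)$ meets $\mathcal{M}$; if not, it does not.  This yields an actual decision procedure, which a fortiori shows the awareness condition is c.e.

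The only subtlety in the argument is the normalization of $p$ to a simple function whose supporting pieces are pairwise disjoint with exactly computable rational measures, and this is easy because the standard presentation of $\BorI$ was chosen with rational-interval special points; after that, the verification is a finite rational arithmetic calculation carried out using decidable equality on $\mathcal{M}^\#$.
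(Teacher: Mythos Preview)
Your proof is correct and follows essentially the same approach as the paper: determine which of the finitely many values $a_j$ coincide (using decidable equality from weak computability of $\mathcal{M}^\#$), sum the measures of the corresponding pieces, and compare against the threshold $1-\epsilon$. Your version is somewhat more explicit---spelling out the distance formula $d(p,c_a)=1-\sum_{j:a_j=a}\mu(I_j)$, the normalization to a disjoint partition, and the fact that the resulting comparison is between rationals and hence decidable rather than merely c.e.---but the underlying idea is identical.
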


\begin{proof}
Given a special point $f=\sum_{j=1}^n a_j \chi_{I_j}$ of $(\mbor)^\#$ and rational $\epsilon>0$, to see if $B(f;\epsilon)$ intersects $\cal M$, one needs to check if there are indices $j_1,\ldots,j_t$ such that $a_{j_1},\ldots,a_{j_t}$ coincide and for which $\sum_{p=1}^t \mu(I_{j_p})>\epsilon$; this can be done since the presentation $\cal M^\#$ is weakly computable.
\end{proof}

\begin{prop}
    For an arbitrary computable presentation $(\mbor)^\#$ of $\mbor$, the following are equivalent:
    \begin{enumerate}
    \item  $(\mbor)^\#$ is aware.
        \item There is an enumeration $(a_n)_{n\in \bb N}$ of $\mathcal{M}$ that is a computable sequence of $(\mbor)^\#$.
    \end{enumerate}
\end{prop}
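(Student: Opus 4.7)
The plan rests on one geometric observation about $\mbor$: any two distinct constants $a,b \in \cal M$, viewed as constant random variables in $\mathcal{K}$, satisfy $d(a,b)=\mu\llbracket a\not= b\rrbracket =1$. Consequently, every rational open ball $B(p;\epsilon)$ of $(\mbor)^\#$ with $\epsilon<\tfrac{1}{2}$ contains \emph{at most one} element of $\cal M$. This lets us use individual small rational balls to ``pin down'' individual constants.

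For the direction $(2)\Rightarrow(1)$, suppose $(a_n)_{n\in\bb N}$ is a computable sequence of $(\mbor)^\#$ enumerating $\cal M$. A rational ball $B(p;\epsilon)$ meets $\cal M$ iff $d(p,a_n)<\epsilon$ for some $n$. I would loop over tuples $(p,\epsilon,n,k)$: the computability of $(a_n)$ produces a generated point $q$ with $d(a_n,q)<1/k$, and the computability of $(\mbor)^\#$ lets us test whether an approximation of $d(p,q)$ certifies $d(p,q)+1/k<\epsilon$; if so, enumerate $B(p;\epsilon)$. Completeness follows because whenever $d(p,a_n)<\epsilon$ there is room to spare, so the certification succeeds for all sufficiently large $k$.

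For the direction $(1)\Rightarrow(2)$, use awareness to c.e.\ enumerate the rational balls $B_n=B(p_n;\epsilon_n)$ of $(\mbor)^\#$ with $\epsilon_n<\tfrac{1}{2}$ that meet $\cal M$. By the key observation each such $B_n$ contains a unique element $a_n\in\cal M$, and every element of $\cal M$ appears in some $B_n$ by density of generated points; hence $(a_n)_n$ enumerates $\cal M$. To show $(a_n)_n$ is a computable sequence of $(\mbor)^\#$, given $n$ and rational $\delta>0$ I would search in parallel over pairs $(q,\delta')$ with $q$ a generated point of $(\mbor)^\#$ and rational $\delta'\in(0,\delta)$ for one satisfying both: (i) $B(q;\delta')$ is formally included in $B_n$, which is c.e.\ since $(\mbor)^\#$ is computable (hence weakly computable, as remarked in the text); and (ii) $B(q;\delta')$ intersects $\cal M$, which is c.e.\ by awareness. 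Such a pair exists because generated points are dense around $a_n$. For any such $q$, the set-level inclusion $B(q;\delta')\subseteq B_n$ combined with the at-most-one property forces the unique element of $\cal M$ in $B(q;\delta')$ to be $a_n$ itself, giving $d(a_n,q)<\delta'<\delta$.

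The main obstacle is the direction $(1)\Rightarrow(2)$: converting the ``bulk'' awareness data (which rational balls meet $\cal M$) into \emph{pointwise} computable approximations of specific elements. The device that makes this work is the interaction between (a) the discreteness of $\cal M$ inside $\mbor$, which isolates elements within sub-$\tfrac{1}{2}$-radius balls, and (b) the formal-inclusion mechanism, which provides a c.e.\ way to shrink these balls arbitrarily around the unique captured element.
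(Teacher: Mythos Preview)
Your proposal is correct and follows essentially the same approach as the paper: in both directions the argument hinges on the discreteness of $\cal M$ inside $\cal K$ (so that sub-$\tfrac12$-radius balls isolate constants) together with the formal-inclusion mechanism to shrink balls. The only cosmetic difference is that in $(1)\Rightarrow(2)$ the paper searches for a smaller ball \emph{within} the c.e.\ enumeration of balls meeting $\cal M$ (so condition (ii) is automatic), whereas you search over all rational balls and impose (ii) separately; the two procedures are equivalent.
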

\begin{proof}
First suppose that $\mathcal{M}$ is a c.e. closed set of $(\mbor)^\#$.   Since $\mathcal{M}$ is a discrete subset of $\mathcal{K}$, it is closed.  Now suppose that $(U_n)_{n\in \mathbb{N}}$ is a c.e. enumeration of all of the rational open balls of $(\mbor)^\#$ of radius less than $1/2$ that intersect $\mathcal{M}$.  Let $a_n$ be the unique element of $\mathcal{M}$ in $U_n$.  Then $(a_n)_{n\in \mathbb{N}}$ is an enumeration of $\mathcal{M}$.  To see that $(a_n)_{n\in \mathbb{N}}$ is a computable sequence of $(\mbor)^\#$, it suffices to note that, given $n\in \mathbb{N}$ and $\epsilon>0$, one may search for the first $m$ such that $U_m$ is formally included in $U_n$ and whose radius is less than $\epsilon$; it follows that the center $q$ of $U_m$ is a generated point of $(\mbor)^\#$ with $d(a_n,q)<\epsilon$. 

Conversely, suppose that there is an enumeration $(a_n)_{n\in \mathbb{N}}$ of $\mathcal{M}$ that is a computable sequence of $(\mbor)^\#$.  To computably enumerate the rational open balls that intersect $\mathcal{M}$, it suffices to list, for each $n\in \mathbb{N}$ and rational $\epsilon>0$, a generated point $p_{n,\epsilon}$ of $(\mbor)^\#$ such that $d(a_n,p_{n,\epsilon})<\epsilon$, and then to include in the enumeration all rational open balls that formally include some $B(p_{n,\epsilon};\epsilon)$.
\end{proof}



\begin{defn}
     If $(\mbor)^\#$ is an aware presentation of $\mbor$, we define the \textbf{induced presentation} $\mathcal M^{(\#)}$ of $\mathcal M$ as follows:  let $(B(f_i;r_i))_{i\in \bb N}$ be an effective listing of the rational open balls of $(\mbor)^\#$ which intersect the constant functions and for which $r_i<1/2$.  In this case, $B(f_i;r_i)$ contains a unique constant random variable, which we declare to be the $i^{\text{th}}$ special point of $\mathcal{M}^{(\#)}$.
\end{defn}

In the context of the previous definition, note that the induced presentation $\cal M^{(\#)}$ of $\cal M$ is such that the presentation is an actual enumeration of $\cal M$, whence it is automatically weakly computable.

If one starts with a weakly computable presentation $\mathcal{M}^\#$ of $\mathcal M$, then passes to the induced presentation $(\mbor)^\#$ of $\mbor$, which is aware by Lemma \ref{inducedaware}, and then passes to the induced presentation $\mathcal M^{(\#)}$ on $\mathcal M$, it is natural to compare $\mathcal{M}^\#$ and $\mathcal{M}^{(\#)}$.  The proof of the following lemma is left to the reader.

\begin{lem}
    In the above notation, the presentations $\mathcal{M}^\#$ and $\mathcal{M}^{(\#)}$ are computably isomorphic.
\end{lem}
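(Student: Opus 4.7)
The plan is to verify that the identity map $\id_{\mathcal{M}}\colon \mathcal{M}\to \mathcal{M}$, which is tautologically a bijection, is a computable map in both directions between the presentations $\mathcal{M}^\#$ (with special points $(a_n)_{n\in \mathbb{N}}$) and $\mathcal{M}^{(\#)}$ (with special points $(b_i)_{i\in \mathbb{N}}$). Since $\mathcal{M}$ inherits the discrete metric from $\mbor$ (distinct constant functions are at distance $1$ in $\mathcal{K}$), a computable map here amounts to an algorithm that, given an index in one presentation, returns an index of the \emph{same} element of $\mathcal{M}$ in the other presentation.

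For the direction $\mathcal{M}^\#\to \mathcal{M}^{(\#)}$, I would fix $n\in \mathbb{N}$ and note that the rational open ball $B(a_n\chi_{[0,1)};1/4)$ is a ball of $(\mbor)^\#$ (since the simple function $a_n\chi_{[0,1)}$ is a special point of the induced presentation), has radius less than $1/2$, and meets $\mathcal{M}$ at $a_n$; hence it occurs in the effective enumeration $(B(f_i;r_i))_{i\in \mathbb{N}}$ at some index $i$, and for that $i$ we have $b_i=a_n$. To locate such an $i$, search through $i=0,1,2,\ldots$ testing the condition $d(a_n,f_i)<r_i$. Because $f_i$ is a simple function $\sum_k c_k\chi_{I_k}$ whose values $c_k$ are presented as indices of special points of $\mathcal{M}^\#$, the distance $d(a_n,f_i)=\sum_{c_k\neq a_n}\mu(I_k)$ is an exactly computable rational, using weak computability of $\mathcal{M}^\#$ to decide the equalities $c_k=a_n$; thus the test is decidable, the search must terminate, and outputting the first witness yields the desired index.

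The reverse direction $\mathcal{M}^{(\#)}\to \mathcal{M}^\#$ is a matter of reading off $b_i$ from the data describing $f_i$. Given $i$, compute the representation $f_i=\sum_{k=1}^m c_k\chi_{I_k}$, compute each of the finitely many rational distances $d(c_k,f_i)$ exactly as above, and return the $\mathcal{M}^\#$-index of any $c_k$ satisfying $d(c_k,f_i)<r_i$. Any element of $\mathcal{M}$ not appearing among the $c_k$'s sits at distance $1$ from $f_i$, so the unique constant in $B(f_i;r_i)$ is necessarily one of the $c_k$'s; the bound $r_i<1/2$ together with the discrete metric ensures that any two $c_k$ witnessing $d(c_k,f_i)<r_i$ must in fact equal the same element of $\mathcal{M}$, so the output is well-defined and coincides with $b_i$.

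Rather than a substantive obstacle, the only delicate point is conceptual: weak computability of $\mathcal{M}^\#$ is exactly what turns the distances between constants and the simple functions $f_i$ into exactly computable rationals, and the cutoff $r_i<1/2$ is exactly what forces the uniqueness needed to read off $b_i$ from the presentation data. With those observations in place, both algorithms are effective, so the identity is a computable isomorphism between $\mathcal{M}^\#$ and $\mathcal{M}^{(\#)}$.
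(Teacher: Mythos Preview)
Your argument is correct. The paper explicitly leaves this proof to the reader, so there is nothing to compare against; what you have written is precisely the routine verification the authors had in mind. The two key observations you isolate---that weak computability of $\mathcal{M}^\#$ makes the distance between a constant and a special simple function an \emph{exactly} computable rational, and that the cutoff $r_i<1/2$ forces uniqueness of the constant in each listed ball---are exactly what is needed, and both directions of the identity map are then handled by straightforward searches. One tiny cosmetic point: your algorithms treat only special points of $\mathcal{M}^\#$ and $\mathcal{M}^{(\#)}$, but since the two presentations share the same language $L$, extending to arbitrary generated points is immediate by applying the same $L$-term to the images of its special-point arguments.
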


Now suppose that one starts with a computable aware presentation $(\mbor)^\#$ of $\mathcal{M}$, then passes to the induced presentation $\mathcal{M}^{(\#)}$ of $\mathcal{M}$, and then to the corresponding induced presentation $(\mbor)^{(\#)}$ of $\mbor$; it is once again natural to compare $(\mbor)^\#$ and $(\mbor)^{(\#)}$ and we will again see that these presentation are computably isomorphic.

First, note that, for any automorphism $\sigma$ of $\BorI$ there is an induced automorphism, also denoted by $\sigma$, on $\mbor$, given on random variables $f$ by $\sigma(f)(\omega):=f(\sigma^{-1}(\omega))$, where $\sigma$ is any measure-preserving transformation of $[0,1)$ that induces $\sigma$ on $\BorI$.  If $\varphi(x_1,\ldots,x_n)$ is an $L$-formula and $f_1,\ldots,f_n$ are elements of $\mathcal{K}$,  we clearly have $$\llbracket \varphi(\sigma(f_1),\ldots,\sigma(f_n))\rrbracket =\sigma \llbracket \varphi(f_1,\ldots,f_n)\rrbracket.$$  If $(\mbor)^\#$ is a presentation of $\mbor$, then we obtain a new presentation $(\mbor)^{\#,\sigma}$ of $\mbor$ whose special points are those obtained by applying $\sigma$ to the special points of $(\mbor)^\#$.   Note that $\sigma$ is a computable isomorphism from $(\mbor)^\#$ to $(\mbor)^{\#,\sigma}$ if the original automorphism $\sigma$ on the probability algebra was computable.  The presentation $\BorI^{\#,\sigma}$ of $\BorI$ induced by $(\mbor)^{\#,\sigma}$ is that obtained by applying $\sigma$ to the special points of $\BorI^\#$.  By computable categoricity of $\BorI$ (see Section \ref{appendix2}), we obtain:

\begin{prop}\label{wlog}
  Any aware presentation of $\mbor$ is computably isomorphic to one that induces the standard presentation on $\BorI$.
\end{prop}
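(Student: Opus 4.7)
The plan is to reduce to the computable categoricity of $\BorI$ established in Section \ref{appendix2}. Suppose $(\mbor)^\#$ is an aware (computable) presentation of $\mbor$, and let $\BorI^\#$ denote the presentation it induces on the probability algebra sort. Since $(\mbor)^\#$ is computable and the generated points of $\BorI$ arising from $(\mbor)^\#$ are built from values $\llbracket \varphi(\vec f)\rrbracket$ on tuples $\vec f$ of special points from $\mathcal K$, the presentation $\BorI^\#$ is itself a computable presentation of $\BorI$.

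By computable categoricity of $\BorI$, there is a computable isomorphism $\sigma\colon \BorI^\#\to \BorI^{\text{std}}$, where $\BorI^{\text{std}}$ denotes the standard presentation of $\BorI$ by rational open intervals. As both sides present the same metric structure, $\sigma$ is an automorphism of $\BorI$. I then invoke the machinery set up in the paragraph preceding the proposition, which lifts $\sigma$ to an automorphism (also named $\sigma$) of $\mbor$ and defines the presentation $(\mbor)^{\#,\sigma}$ whose special points are the $\sigma$-images of those of $(\mbor)^\#$. Because the original $\sigma$ on $\BorI$ was computable, that same discussion tells us $\sigma$ is a computable isomorphism from $(\mbor)^\#$ to $(\mbor)^{\#,\sigma}$.

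It remains to identify the presentation that $(\mbor)^{\#,\sigma}$ induces on $\BorI$. Using the identity $\llbracket \varphi(\sigma f_1,\ldots,\sigma f_n)\rrbracket = \sigma\llbracket \varphi(f_1,\ldots,f_n)\rrbracket$ noted in the text, the induced presentation is exactly $\BorI^{\#,\sigma}$, obtained by applying $\sigma$ to the special points of $\BorI^\#$. Since $\sigma\colon \BorI^\#\to \BorI^{\text{std}}$ is a computable isomorphism, $\BorI^{\#,\sigma}$ is computably isomorphic to $\BorI^{\text{std}}$, so $(\mbor)^{\#,\sigma}$ induces the standard presentation on $\BorI$ in the same sense (up to computable isomorphism of presentations of $\BorI$) in which this terminology is used earlier in the subsection. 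The only real content is the invocation of computable categoricity of $\BorI$; the potential obstacle is matching the conventions so that the lifted $\sigma$ interacts correctly with both the distance on $\mathcal K$ and the function symbols $\llbracket\varphi(\cdot)\rrbracket$, but this is exactly what the preparatory paragraph was designed to provide.
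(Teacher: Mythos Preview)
Your proof is correct and follows essentially the same approach as the paper: the paper simply states the proposition as an immediate consequence of the preceding paragraph together with the computable categoricity of $\BorI$, and you have filled in exactly those details. Your parenthetical remark that the presentation should be computable (so that the induced $\BorI^\#$ is computable and computable categoricity applies) and your observation that ``induces the standard presentation'' is meant up to computable isomorphism are both in line with how the paper uses these notions; note also that neither your argument nor the paper's sketch actually uses the awareness hypothesis here---it is only needed in the subsequent proposition.
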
    \begin{prop}\label{awarewlog}
        Suppose that $(\mbor)^\#$ is a computable aware presentation of  $\mbor$.  Let $\cal M^{(\#)}$ be the induced presentation of $\cal M$ which then induces the presentation $(\mbor)^{(\#)}$ on $\mbor$.  Then $(\mbor)^\#$ is computably isomorphic to $(\mbor)^{(\#)}$. Consequently, any computable aware  presentation of $\mbor$ is computably isomorphic to the induced presentation on $\mbor$ from a presentation of $\mathcal{M}$ (which will later be proven to be a decidable presentation of $\mathcal{M})$.
         \end{prop}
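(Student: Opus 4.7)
The plan is to show that the identity map on $\mbor$ is a computable isomorphism between $(\mbor)^\#$ and $(\mbor)^{(\#)}$. By Proposition \ref{wlog}, we may assume without loss of generality that $(\mbor)^\#$ induces the standard presentation on $\BorI$, so that each standard rational interval is a uniformly computable point of the $\cal B$-sort of $(\mbor)^\#$. Since $(\mbor)^\#$ is aware, the preceding proposition yields an enumeration $(a_n)_{n \in \bb N}$ of $\cal M$ that is a computable sequence of $(\mbor)^\#$, and we may take this to be the enumeration underlying $\cal M^{(\#)}$. Consequently, each special point $p = \sum_{j=1}^n a_{i_j}\chi_{I_j}$ of $(\mbor)^{(\#)}$ is built from constants $a_{i_j}$ and intervals $I_j$ that are uniformly computable in $(\mbor)^\#$.

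The main technical step is to observe that, for such a $p$ and any special point $q$ of $(\mbor)^\#$, the distance $d(p,q)$ is uniformly approximable. Since the $I_j$ are pairwise disjoint,
$$\llbracket p \neq q \rrbracket \;=\; \bigsqcup_{j=1}^n \bigl(I_j \cap \llbracket a_{i_j} \neq q \rrbracket\bigr),$$
so that $d(p,q) = \sum_{j=1}^n \mu\bigl(I_j \cap \llbracket a_{i_j} \neq q\rrbracket\bigr)$. Each event $\llbracket a_{i_j} \neq q\rrbracket$ arises from the $L^R$ function symbol $\llbracket x = y\rrbracket$ composed with Boolean complementation and applied to the uniformly computable constant $a_{i_j}$ and the special point $q$, and is therefore a uniformly computable point of $\BorI$ in $(\mbor)^\#$. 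Intersections with the $I_j$ and computation of measures are then uniformly computable, and since the sum is finite one obtains an algorithm approximating $d(p,q)$ to within any prescribed rational tolerance, uniformly in the indices of $p$ and $q$.

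Granting this, both directions of the identity are handled symmetrically. To see that the identity gives a computable map $(\mbor)^{(\#)} \to (\mbor)^\#$, given a special point $p$ of $(\mbor)^{(\#)}$ and rational $\epsilon > 0$, we enumerate the special points $q_k$ of $(\mbor)^\#$, uniformly approximate each $d(p,q_k)$, and halt at the first $k$ witnessing $d(p,q_k) < \epsilon$; such a $k$ exists because the special points of $(\mbor)^\#$ are dense in $\cal K$ by definition of a presentation. Conversely, for the identity as a map $(\mbor)^\# \to (\mbor)^{(\#)}$, we enumerate the special points $p_k$ of $(\mbor)^{(\#)}$ and search similarly; existence is guaranteed here because simple functions whose values lie in $\cal M$ and whose level sets are finite unions of standard rational intervals are dense in $\cal K$. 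The ``consequently'' clause is then immediate, since $(\mbor)^{(\#)}$ is by definition the presentation induced from $\cal M^{(\#)}$.

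The main obstacle I foresee is the uniform distance computation across two \emph{a priori} unrelated presentations of $\mbor$; this is resolved by noting that $p$ is a \emph{finite} combination of constants and rational intervals, so $d(p,q)$ decomposes into a finite sum of measures of intersections of events that are each uniformly computable in $(\mbor)^\#$ under the WLOG assumption afforded by Proposition \ref{wlog}.
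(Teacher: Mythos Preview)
Your proof is correct and follows essentially the same strategy as the paper's: reduce via Proposition~\ref{wlog} to the case where $(\mbor)^\#$ induces the standard presentation on $\BorI$, and then verify that the identity map is a computable isomorphism by showing the special points of $(\mbor)^{(\#)}$ are uniformly computable in $(\mbor)^\#$. The paper's execution of this last step is slightly different---it approximates each constant $a_i$ by a special point $f_i$ of $(\mbor)^\#$ and then searches for a special point $g$ with $\llbracket g=f_i\rrbracket$ close to $I_i$ for each $i$, rather than computing $d(p,q)$ directly via your decomposition $\llbracket p\neq q\rrbracket=\bigsqcup_j\bigl(I_j\cap\llbracket a_{i_j}\neq q\rrbracket\bigr)$---but the two arguments are interchangeable, and your version has the minor advantage of handling both directions of the isomorphism symmetrically.
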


    \begin{proof}
        By Proposition \ref{wlog}, we may assume that the induced presentation on $\BorI$ is the standard presentation.  Since the metric on the special points of $(\mbor)^\#$ is computable, it suffices to show that each special point of $(\mbor)^{(\#)}$ is uniformly a computable point of $(\mbor)^\#$ (for then the identiy map on $\mbor$ will be a computable isomorphism from $(\mbor)^{(\#)}$ to $(\mbor)^\#$).  So consider a special point $f$ of $(\mbor)^{(\#)}$, which is a simple function $\sum_i a_i \chi_{I_i}$, where $I_i$ is an interval in $[0,1)$ with rational endpoints.  By the definition of $\mathcal{M}^{(\#)}$, we may search for special points $f_i$ of $(\mbor)^\#$ such that $d(a_i,f_i)$ is sufficiently small.  We then search for a special point $g$ of $(\mbor)^\#$ for which $d(\llbracket g=f_i\rrbracket, I_i)$ is sufficiently close to $\mu(I_i)$ for each $i=1,\ldots,n$.  It follows that $g$ is a special point of $(\mbor)^\#$ close to $f$, as desired.
    \end{proof}

We suspect that the following question has a negative answer, but we have been unable to provide a counterexample:
\begin{question}\label{awarequestion}
    Are all computable presentations of $\mbor$ aware?
    \end{question}

\section{Computable presentations of randomizations}

Until further notice, we fix a presentation $\cal M^\#$ of $\cal M$ and let $(\mbor)^\#$ be the induced randomization of $\mbor$.  If $\varphi(\vec x)$ is an $L$-formula and $\mathbf{d}$ is a Turing degree, let us say that $\varphi(\vec x)$ is \textbf{$\mathbf{d}$-computable with respect to $(\mbor)^\#$} if there is an algorithm with  oracle $\mathbf{d}$ such that, upon inputs generated points $\vec f$ of $(\mbor)^\#$ and $\epsilon>0$, returns a rational number $q$ such that $|\mu\llbracket \varphi(\vec f)\rrbracket-q|<\epsilon$.  If the algorithm instead merely returns a non-decreasing sequence of rational lower bounds that converge to $\mu\llbracket \varphi(\vec f)\rrbracket$, we say that $\varphi(\vec x)$ is \textbf{$\mathbf{d}$-left c.e. with respect to $(\mbor)^\#$}.  Note that if $\varphi(\vec x)$ is $\mathbf{d}$-left c.e. with respect to $(\mbor)^\#$, then it is $\mathbf{d}'$-computable with respect to $(\mbor)^\#$.

\begin{prop}\label{fullnesslemma}
Suppose that $\varphi(x,\vec y)$ is an $L$-formula that  is $\bf d$-computable with respect to $(\mbor)^\#$.  Then $\exists x\varphi(x,\vec y)$ is $\mathbf{d}$-left c.e. with respect to $(\mbor)^\#$.
\end{prop}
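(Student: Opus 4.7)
The plan is to enumerate candidate witnesses from among the special points of sort $\mathcal{K}$ and use Fact \ref{fullness} to argue that some such witness comes within any prescribed error of the truth value of the existential. The key observation is that for any $g, g' \in \mathcal{K}$, the events $\llbracket \varphi(g, \vec f)\rrbracket$ and $\llbracket \varphi(g', \vec f)\rrbracket$ can differ only on the set $\llbracket g \neq g'\rrbracket$, so the map $g \mapsto \llbracket \varphi(g, \vec f)\rrbracket$ is $1$-Lipschitz. Combined with fullness, this shows that $\mu\llbracket \exists x\, \varphi(x, \vec f)\rrbracket$ equals the supremum of $\mu\llbracket \varphi(g, \vec f)\rrbracket$ as $g$ ranges over any dense subset of $\mathcal{K}$, in particular over the effectively enumerable set of special points of $(\mbor)^\#$.

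The algorithm is then as follows. Let $(g_n)_{n \in \mathbb{N}}$ be the effective enumeration of the special points of $(\mbor)^\#$ in sort $\mathcal{K}$. Given $\vec f$, use the hypothesized $\mathbf{d}$-oracle algorithm to compute, for each $n$, a rational $q_n$ with $|\mu\llbracket \varphi(g_n, \vec f)\rrbracket - q_n| < 2^{-n}$. At stage $n$, output $r_n := \max_{0 \leq i \leq n}(q_i - 2^{-i})$, which is non-decreasing by construction. Since $\llbracket \varphi(g_i, \vec f)\rrbracket \subseteq \llbracket \exists x\, \varphi(x, \vec f)\rrbracket$, each $q_i - 2^{-i} < \mu\llbracket \varphi(g_i, \vec f)\rrbracket \leq \mu\llbracket \exists x\, \varphi(x, \vec f)\rrbracket$, so $r_n$ is always a valid lower bound for the target.

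For convergence, fix $\epsilon > 0$. By Fact \ref{fullness}, pick $g \in \mathcal{K}$ with $\mu\llbracket \varphi(g, \vec f)\rrbracket > \mu\llbracket \exists x\, \varphi(x, \vec f)\rrbracket - \epsilon/3$. Since the special points are dense in $\mathcal{K}$ and, moreover, any given simple function admits arbitrarily many distinct codings in our enumeration (for instance by trivially refining the interval partition), one can locate an index $k$ satisfying both $d(g, g_k) < \epsilon/3$ and $2^{-k} < \epsilon/3$. The $1$-Lipschitz property then yields $\mu\llbracket \varphi(g_k, \vec f)\rrbracket > \mu\llbracket \exists x\, \varphi(x, \vec f)\rrbracket - 2\epsilon/3$, whence $r_k \geq q_k - 2^{-k} > \mu\llbracket \exists x\, \varphi(x, \vec f)\rrbracket - \epsilon$. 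I do not foresee any real obstacle beyond this last approximation step, namely pairing the density of special points with the Lipschitz bound so that special witnesses of large index approximate the fullness witness well enough to overcome the $2^{-k}$ error produced by the oracle.
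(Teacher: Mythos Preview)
Your proof is correct and follows the same approach as the paper: use Fact~\ref{fullness} to see that $\mu\llbracket\exists x\,\varphi(x,\vec f)\rrbracket$ is the supremum of $\mu\llbracket\varphi(g,\vec f)\rrbracket$ over witnesses $g$, then enumerate lower approximations using the $\mathbf{d}$-computability of $\varphi$. The paper's proof is a two-line sketch that leaves the algorithmic details implicit; you have made explicit both the algorithm (the running maximum of $q_i-2^{-i}$) and the passage from an arbitrary fullness witness in $\mathcal{K}$ to a special point via the $1$-Lipschitz property of $g\mapsto\llbracket\varphi(g,\vec f)\rrbracket$, together with the observation that each simple function has infinitely many codes so that the index $k$ can be taken large.
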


\begin{proof}
By Fact \ref{fullness}, the events $\llbracket \varphi(f,\vec g)\rrbracket$ approximate $\llbracket\exists x\varphi(x,\vec g)\rrbracket$ arbitrarily well from within; the result now follows from the assumption that $\varphi(x,\vec y)$ is $\mathbf{d}$-computable with respect to $(\mbor)^\#$.
\end{proof}

\begin{prop}\label{computingformulae}
If $\cal M^\#$ is a computable presentation of $\mathcal{M}$, then any  quantifier-free $L$-formula $\varphi(\vec x)$ is computable with respect to $(\mbor)^\#$.  Moreover, an algorithm witnessing that $\varphi(\vec x)$ is computable with respect to $(\mbor)^\#$ can be obtained from an algorithm witnessing that $\mathcal{M}^\#$ is a computable presentation of $\mathcal{M}$.
\end{prop}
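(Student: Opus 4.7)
The plan is to exploit that every generated point of $(\mbor)^\#$ in the sort $\mathcal{K}$ is, by construction, a simple function $\sum_j a_j\chi_{I_j}$ where each $a_j$ is a generated point of $\mathcal{M}^\#$ and each $I_j$ is a generated point of the standard presentation of $\BorI$ (in particular, a finite Boolean combination of rational intervals in $[0,1)$, having rational measure). Given a tuple $\vec f=(f_1,\dots,f_n)$ of such generated points, I would first compute, uniformly from the codes of the $f_k$, a common refinement: a finite partition $\{J_l\}_{l=1}^N$ of $[0,1)$ into rational events on each of which every $f_k$ is constant with some value $b_{k,l}$, a generated point of $\mathcal{M}^\#$.

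Because $\vec f(\omega)=(b_{1,l},\dots,b_{n,l})=:\vec b_l$ whenever $\omega\in J_l$, we have the exact identity $\llbracket\varphi(\vec f)\rrbracket=\bigsqcup\{J_l:\mathcal{M}\models\varphi(\vec b_l)\}$ in $\BorI$, so $\mu\llbracket\varphi(\vec f)\rrbracket=\sum\{\mu(J_l):\mathcal{M}\models\varphi(\vec b_l)\}$ is in fact a rational number that the algorithm can output exactly. The task therefore reduces to deciding the finitely many classical truth values $\mathcal{M}\models\varphi(\vec b_l)$. Since $\varphi$ is quantifier-free and $\mathcal{M}$ carries the discrete metric, the remark immediately preceding the proposition already guarantees that a computable presentation $\mathcal{M}^\#$ can approximate the value of any quantifier-free formula on a generated tuple to within any rational tolerance; as these values lie in $\{0,1\}$, any tolerance below $1/2$ pins them down exactly. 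The whole procedure, including the common-refinement bookkeeping, is uniform in the algorithm witnessing that $\mathcal{M}^\#$ is computable, which yields the ``moreover'' clause for free.

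I do not expect any genuine obstacle here: a simple function takes only finitely many values, so a quantifier-free computation in $\mbor$ splits into finitely many classical quantifier-free decisions in $\mathcal{M}$ weighted by rational measures. The only points demanding even modest care are articulating the common refinement from the codes of the $f_k$ and noting that the decidability of quantifier-free truth on generated tuples of $\mathcal{M}^\#$ inherits uniformity from the defining algorithm.
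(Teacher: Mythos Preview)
Your proposal is correct and follows essentially the same approach as the paper's proof: the paper writes each $f_i=\sum_j a_{ij}\chi_{I_{ij}}$, computes the set $X$ of index tuples $(j(1),\ldots,j(n))$ for which $\mathcal{M}\models\varphi(a_{1j(1)},\ldots,a_{nj(n)})$ using computability of $\mathcal{M}^\#$, and then sums the measures $\mu(I_{1j(1)}\cap\cdots\cap I_{nj(n)})$ over $X$. Your common-refinement partition $\{J_l\}$ is just a repackaging of these intersections, and your observation that the resulting measure is an exactly computable rational (rather than merely effectively approximable, as the paper phrases it) is a harmless sharpening.
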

\begin{proof}
  Suppose $\vec x=(x_1,\ldots,x_n)$ and consider a tuple $\vec f=(f_1,\ldots,f_n)$ of special points of $(\mbor)^\#$.    Write $f_i=\sum_j a_{ij}\chi_{I_{ij}}$.  Set $$X:=\{(j(1),\ldots,j(n)) \ : \ \mathcal M\models \varphi(a_{1j(1)},\ldots,a_{nj(n)}\};$$ since $\mathcal M^\#$ is computable, we see that $X$ can be computed from $\varphi$ and $\vec f$; it then follows that $\mu\llbracket\varphi(\vec f)\rrbracket=\sum_{(j(1),\ldots,j(n))\in X}\mu(I_{1j(1)}\cap \cdots\cap I_{nj(n)})$, which can then be effectively approximated.
\end{proof}

\begin{cor}\label{maincor}
If $\cal M^\#$ is computable, then $(\mbor)^\#$ is $\mathbf{0}^{(\omega)}$-computable.
\end{cor}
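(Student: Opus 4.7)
The plan is to first show by induction on quantifier depth that each $L$-formula of depth $k$ is $\mathbf{0}^{(k)}$-computable with respect to $(\mbor)^\#$, and then to reduce measure-computation of an arbitrary generated point in $\mathcal{B}$ to finitely many such formula computations.

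For the formula induction, I would put $\varphi$ in prenex form $Q_1 x_1 \cdots Q_k x_k\,\psi$ with $\psi$ quantifier-free; Proposition \ref{computingformulae} handles the base case, giving $\mathbf{0}$-computability of $\psi$. Since the inner formulas are classical, $\llbracket \neg\theta\rrbracket = [0,1)\setminus\llbracket \theta\rrbracket$, so $\mu\llbracket \neg\theta\rrbracket = 1-\mu\llbracket \theta\rrbracket$, and negation preserves $\mathbf{d}$-computability at no oracle cost. I would then add quantifiers one at a time from the innermost outward: Proposition \ref{fullnesslemma} advances $\mathbf{d}$-computability to $\mathbf{d}$-left c.e., hence $\mathbf{d}'$-computability, when prepending $\exists$, while $\forall x\theta$ is handled as $\neg\exists x\neg\theta$. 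After $k$ such steps we reach $\mathbf{0}^{(k)}$-computability of $\varphi$.

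It suffices now to show that $\mu$ on arbitrary generated points of $\mathcal{B}$ is $\mathbf{0}^{(\omega)}$-computable, since the remaining predicates reduce to $\mu$. Such a point $b$ is a term built from rational-interval special points of $\mathcal{B}$ and simple-function special points in $\mathcal{K}$ via Boolean operations and finitely many function symbols $\llbracket\varphi_i(\cdot)\rrbracket$. Let $k$ bound the quantifier depth of the $\varphi_i$'s involved, and take a common rational refinement $(J_\alpha)_\alpha$ of $[0,1)$ on which each simple function is constant and such that each involved rational-interval special point is a union of $J_\alpha$'s. On each $J_\alpha$, a constituent event $\llbracket\varphi_i(\vec f)\rrbracket$ restricted to $J_\alpha$ equals either $J_\alpha$ or $\emptyset$, depending on whether $\mathcal{M}\models \varphi_i(\vec a_\alpha)$, where $\vec a_\alpha$ is the constant value of $\vec f$ on $J_\alpha$; and this ``$J_\alpha$-or-$\emptyset$ on $J_\alpha$'' property is preserved by union, intersection, and Boolean complement with respect to $[0,1)$. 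Hence $b\cap J_\alpha\in\{J_\alpha,\emptyset\}$, and
\[
\mu(b)\;=\;\sum_{\alpha:\, b\cap J_\alpha = J_\alpha}\mu(J_\alpha),
\]
where the indicator for each $\alpha$ is determined by finitely many classical truth queries ``$\mathcal{M}\models \varphi_i(\vec a_\alpha)$,'' each decidable from $\mathbf{0}^{(k)}$ since $\mathcal{M}^\#$ is computable. Uniformly over generated points, the required oracle ranges over all finite levels of the arithmetical hierarchy, yielding $\mathbf{0}^{(\omega)}$-computability.

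The main obstacle is bridging between Proposition \ref{fullnesslemma}, which only yields computability of single-formula measures $\mu\llbracket\varphi(\vec f)\rrbracket$ on simple functions, and the potentially intricate term structure of a generated point of $\mathcal{B}$, which may interleave Boolean-algebra operations with nested $\llbracket\cdot\rrbracket$-subterms. The rational-refinement device above sidesteps this by collapsing the Boolean-algebra-level computation on each $J_\alpha$ to classical-logic decidability in $\mathcal{M}^\#$ at constants, which is precisely where the arithmetical hierarchy enters.
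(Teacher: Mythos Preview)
Your argument is correct, and your first half---the prenex induction using Propositions~\ref{fullnesslemma} and~\ref{computingformulae} to show that every depth-$k$ $L$-formula is $\mathbf 0^{(k)}$-computable with respect to $(\mbor)^\#$---is exactly the content of the paper's one-line proof.

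Your second half goes beyond the paper and explicitly addresses a step the paper leaves implicit: those two propositions literally only yield $\mu\llbracket\varphi(\vec f)\rrbracket$ for special (simple) tuples $\vec f$, whereas computability of the presentation requires approximating $\mu(b)$ for an \emph{arbitrary} generated point $b$ of sort $\mathcal B$, which may be a Boolean combination of rational intervals and several events $\llbracket\varphi_i(\vec f_i)\rrbracket$. Your common-refinement reduction to finitely many classical truth queries in $\mathcal M$ handles this cleanly. An alternative, closer in spirit to the cited propositions, is to effectively rewrite any such $b$ as a single event $\llbracket\theta(\vec h)\rrbracket$: Boolean operations are absorbed via $\llbracket\varphi\rrbracket\cap\llbracket\psi\rrbracket=\llbracket\varphi\wedge\psi\rrbracket$ and so on, and a rational interval $I$ is realized as $\llbracket x=y\rrbracket(g_1,g_2)$ for simple functions agreeing exactly on $I$ (the degenerate case $|\mathcal M|=1$ being trivial). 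Either route closes the gap. One minor redundancy worth noting: your refinement argument is in fact self-contained and does not need the first half, since deciding $\mathcal M\models\varphi_i(\vec a_\alpha)$ at depth $k$ from a computable $\mathcal M^\#$ is directly $\mathbf 0^{(k)}$-decidable by the usual arithmetical-hierarchy count, without passing through Fact~\ref{fullness} or Proposition~\ref{fullnesslemma} at all.
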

\begin{proof}
    This follows immediately from Propositions \ref{fullnesslemma} and \ref{computingformulae}. 
\end{proof}
In Proposition \ref{computingformulae}, if we assume that $\mathcal{M}^\#$ is a decidable presentation of $\mathcal{M}$, then the same proof shows that all $L$-formulae are computable with respect to $(\mbor)^\#$ and in a uniform manner.  In other words, we have the following improvement of Corollary \ref{maincor}:

\begin{cor}\label{maincor2}
If $\cal M^\#$ is decidable, then $(\mbor)^\#$ is computable.
\end{cor}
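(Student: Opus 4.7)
The plan is to observe that the proof of Proposition \ref{computingformulae} never really uses quantifier-freeness, only the ability to decide membership in the set
\[
X_{\varphi,\vec f}:=\{(j(1),\ldots,j(n))\ :\ \mathcal{M}\models \varphi(a_{1j(1)},\ldots,a_{nj(n)})\}
\]
when $\vec f=(f_1,\ldots,f_n)$ is a tuple of special points of $(\mbor)^\#$ with $f_i=\sum_j a_{ij}\chi_{I_{ij}}$. If $\mathcal{M}^\#$ is decidable, then this set can be computed (uniformly in $\varphi$) for \emph{any} $L$-formula $\varphi$, and the same identity $\mu\llbracket \varphi(\vec f)\rrbracket=\sum_{(j(1),\ldots,j(n))\in X_{\varphi,\vec f}}\mu(I_{1j(1)}\cap\cdots\cap I_{nj(n)})$ then yields, in fact exactly (not merely approximately), the measure of $\llbracket\varphi(\vec f)\rrbracket$. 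Hence every $L$-formula is computable with respect to $(\mbor)^\#$, uniformly in the formula.

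From this I need to deduce that $(\mbor)^\#$ is a computable presentation, i.e.\ that the measure predicate $\mu$ on the $\mathcal{B}$-sort can be approximated uniformly on generated points (the distances on $\mathcal{K}$ and on $\mathcal{B}$ are themselves instances of $\mu$ applied to an event of the quantifier-free formula $x\neq y$, respectively to a Boolean combination of generated points of $\mathcal{B}$). A generated point of the $\mathcal{B}$-sort of $(\mbor)^\#$ is a Boolean combination of rational intervals and events of the form $\llbracket\varphi_k(\vec f^{(k)})\rrbracket$, where each $\vec f^{(k)}$ is a tuple of simple functions.

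To handle this, I would collect all the simple functions appearing in any such event into a single tuple $\vec f$ (padding the formulas $\varphi_k$ with dummy variables as needed so that each uses $\vec f$), and take the common refinement $\{J_1,\ldots,J_N\}$ of all the intervals appearing in the components of $\vec f$ together with all the rational intervals that appear in the Boolean combination. Each $f_i$ is constant on each $J_\ell$, with value $b_{i\ell}$, and the decidability of $\mathcal{M}^\#$ lets me determine, for each $\ell$, which of the formulas $\varphi_k(\vec b_\ell)$ hold; hence I learn exactly which of the $J_\ell$'s belong to the given Boolean combination. Summing the (rational) lengths of those $J_\ell$'s gives the exact measure.

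The main obstacle, such as it is, is conceptual rather than technical: one must notice that decidability of $\mathcal{M}^\#$ makes the quantifier-free restriction in Proposition \ref{computingformulae} inessential, and that the common-refinement trick reduces the computation of $\mu$ on an arbitrary generated point of $\mathcal{B}$ in $(\mbor)^\#$ to finitely many membership questions in $\mathcal{M}$ of the kind that decidability resolves. Once these two observations are in place, the argument is a direct reprise of the proof of Proposition \ref{computingformulae}.
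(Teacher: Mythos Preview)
Your proof is correct and follows the same approach as the paper: the paper's argument is simply the one-sentence observation that, under decidability of $\mathcal{M}^\#$, the proof of Proposition \ref{computingformulae} goes through for \emph{all} $L$-formulae uniformly, which is exactly your first paragraph. Your second and third paragraphs spell out the passage from ``all $L$-formulae are uniformly computable with respect to $(\mbor)^\#$'' to ``$(\mbor)^\#$ is computable'' via the common-refinement trick, a step the paper treats as immediate (both here and in Corollary \ref{maincor}); this is a welcome elaboration rather than a different route.
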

As stated in the introduction, since $\mbor$ has effective quantifier-elimination (see Section \ref{appendix1}), there is no difference between a computable presentation of $\mbor$ and a decidable one.  Consequently, the previous corollary could be restated in a more symmetric fashion:  if $\mathcal{M}^\#$ is decidable, then $(\mbor)^\#$ is decidable.

We can now obtain a partial converse to the previous corollary, which is one of the main results of this paper:

\begin{thm}
$\cal M$ has a decidable presentation if and only if $\mbor$ has an aware computable presentation.
\end{thm}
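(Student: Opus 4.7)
The plan is to prove each direction separately by leveraging the machinery already set up. For the forward direction, suppose $\cal M$ has a decidable presentation $\cal M^\#$. I would first invoke Corollary \ref{maincor2} to obtain that the induced presentation $(\mbor)^\#$ is computable, and since any decidable presentation is in particular weakly computable, Lemma \ref{inducedaware} then ensures that $(\mbor)^\#$ is aware. This half assembles directly from pieces already in hand.

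For the reverse direction, suppose $(\mbor)^\#$ is an aware computable presentation of $\mbor$. Using Proposition \ref{awarewlog}, I would replace $(\mbor)^\#$ by the computably isomorphic induced presentation $(\mbor)^{(\#)}$ coming from $\cal M^{(\#)}$; in particular $(\mbor)^{(\#)}$ is itself computable, and $\cal M^{(\#)}$ is a presentation of $\cal M$.  The key observation is that every special point $a$ of $\cal M^{(\#)}$, interpreted as the constant random variable on $[0,1)$ with value $a$, is literally a special point of $(\mbor)^{(\#)}$, namely the simple function $a\chi_{[0,1)}$. Now for any $L$-formula $\psi(\vec x)$ and any tuple $\vec a$ of special points of $\cal M^{(\#)}$, the event $\llbracket \psi(\vec a)\rrbracket$ equals all of $[0,1)$ or is empty according to whether $\cal M\models \psi(\vec a)$, so $\mu\llbracket \psi(\vec a)\rrbracket\in\{0,1\}$. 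Since $\llbracket \psi(\cdot)\rrbracket$ is a function symbol of $L^R$ regardless of the quantifier complexity of $\psi$ and $(\mbor)^{(\#)}$ is computable, I can approximate $\mu\llbracket \psi(\vec a)\rrbracket$ to within $1/3$ and thereby decide whether $\cal M\models \psi(\vec a)$. A generated point of $\cal M^{(\#)}$ has the form $t(\vec a)$ for an $L$-term $t$, and one reduces to the case of special points by replacing $\psi(\vec x)$ with $\psi(t(\vec x))$.

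I do not anticipate a substantial obstacle, since the genuine work has already been done earlier in the paper. The main conceptual point is that a computable presentation of $\mbor$ tacitly carries full information about arbitrary $L$-formulas, not merely the quantifier-free ones, through the function symbols $\llbracket \psi(\cdot)\rrbracket$ of $L^R$; awareness then supplies the missing ingredient of recognizing the constants, so once we normalize the presentation via Proposition \ref{awarewlog}, the decidability of $\cal M^{(\#)}$ falls out mechanically from the $\{0,1\}$ dichotomy for $\mu\llbracket \psi(\vec a)\rrbracket$ on constants.
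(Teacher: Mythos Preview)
Your proposal is correct and follows essentially the same approach as the paper: both directions invoke exactly the same earlier results (Lemma~\ref{inducedaware} and Corollary~\ref{maincor2} for the forward direction, Proposition~\ref{awarewlog} followed by the $\{0,1\}$ dichotomy for $\mu\llbracket\psi(\vec a)\rrbracket$ on constants for the converse). Your explicit reduction from generated points to special points via term substitution is a harmless extra step---in fact the induced presentation already uses \emph{generated} points of $\mathcal{M}^{(\#)}$ as values in its simple functions, so every generated point $a$ of $\mathcal{M}^{(\#)}$ is directly a special point $a\chi_{[0,1)}$ of $(\mbor)^{(\#)}$.
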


\begin{proof}
If $\mathcal M$ has a decidable presentation, then the induced presentation on $\mbor$ is aware by Lemma \ref{inducedaware} and computable by Corollary \ref{maincor2}.

Conversely, suppose that $(\mbor)^\#$ is a computable aware presentation of $\mbor$.  By Proposition \ref{awarewlog}, we may assume that $(\mbor)^\#$ is the presentation induced by a presentation $\mathcal{M}^\#$ of $\mathcal{M}$.  In order to show that $\mathcal{M}^\#$ is decidable, fix an arbitrary formula $\varphi(\vec x)$ and tuple $\vec a$ of generated points of $\mathcal{M}^\#$.  Since $\vec a$ can also be construed as a tuple of generated points of $(\mbor)^\#$ (and whose $(\mbor)^\#$-indices can be computed from their $\mathcal{M}^\#$-indices), we can approximate $\mu\llbracket\varphi(\vec a)\rrbracket$ to within $1/2$; since the value of $\mu\llbracket\varphi(\vec a)\rrbracket$ is either $1$ or $0$, depending on whether or not $\mathcal{M}\models \varphi(\vec a)$, this allows us to decide the latter statement.
\end{proof}

The proof of the second half of the preceding corollary ultimately relied on the computable categoricity of $\BorI$.  The reader might appreciate an argument which avoids this result, which goes as follows.  Maintain the notation in the previous proof.  Recall that we have a c.e. enumeration $B(f_k;r_k)$ of all of the rational open balls intersecting $\mathcal{M}$.  Suppose that each constant element $a_k$ was first found as being contained in the rational open ball $B(f_{i_k};r_{i_k})$.  Now for each $k=1,\ldots,n$, find $j_k\in \bb N$ such that $r_{j_k}$ is sufficiently small and for which $B(f_{j_k};r_{j_k})$ is formally included in $B(f_{i_k},r_{i_k})$; note that $a_k$ is the unique constant function in $B(f_{j_k};r_{j_k})$.  By assumption, one can approximate $\mu\llbracket \varphi(f_{j_1},\ldots,f_{j_k})\rrbracket$ arbitrarily well; if each $r_{j_1},\ldots,r_{j_n}$ is sufficiently small, then there is a large measure set of $\omega\in [0,1)$ for which $f_{j_k}(\omega)=a_k$ for all $k=1,\ldots,n$; consequently, $\mu\llbracket \varphi(f_{j_1},\ldots,f_{j_k})\rrbracket$ should be close to $1$ or $0$, allowing one to decide whether or not $\mathcal{M}\models \varphi(\vec a)$.

The following question is related to Question \ref{awarequestion} above:

\begin{question}
Can one drop the ``awareness'' assumption in the previous theorem?
\end{question}

One idea we had concerning the previous question was to find structures $\mathcal M$ and $\mathcal N$ such that $\mathcal M$ has a decidable presentation, $\mathcal N$ has a computable presentation but no decidable presentation, and satisfying $\mbor\cong \mathcal{N}^{[0,1)}$.  In this case, $\mathcal{N}^{[0,1)}$ would have a computable presentation, showing that one could not drop the awareness assumption in the previous theorem.  However, this attempt is doomed to fail as one can show that $\mbor\cong \mathcal{N}^{[0,1)}$ implies that $\mathcal{M}\cong \mathcal{N}$.  Indeed, if $\varphi$ is the Scott sentence for $\mathcal{M}$, then given that Borel randomizations and $L_{\omega_1,\omega}$ formulae interact properly (see \cite{keisler2}), from $\mbor\cong \mathcal{N}^{[0,1)}$ and the fact that $\mbor\models \llbracket\varphi\rrbracket$, one concludes that $\mathcal{N}^{[0,1)}\models \llbracket\varphi\rrbracket$ and thus $\mathcal{N}\models \varphi$, whence $\mathcal{M}\cong \mathcal{N}$.

In certain circumstances, any computable presentation of $\mbor$ is automatically aware.  We isolate one such situation here:

\begin{defn}
    We say that $\mathcal{M}$ is \textbf{effectively recognizable} if there is a presentation $\mathcal{M}^\#$ of $\mathcal{M}$ for which there is an algorithm such that, upon input a generated point $a$ of $\mathcal{M}^\#$ returns an $L$-formula $\varphi_a(x)$ such that $\varphi_a(\mathcal{M})=\{a\}$.
\end{defn}
\begin{examples}

\

    \begin{enumerate}
        \item Any computably presentable real closed field is effectively recognizable.
        \item For any $L$-structure $\mathcal{M}$, its expansion $\mathcal{M}_M$ obtained by naming all constants in $\mathcal{M}$ is effectively recognizable.
    \end{enumerate}
\end{examples}
\begin{lem}
    If $\mathcal{M}$ is effectively recognizable as witnessed by the presentation $\mathcal{M}^\#$, then the presentation $(\mbor)^\#$ is aware if it is computable.
\end{lem}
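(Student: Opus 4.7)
The plan is to fix an arbitrary computable presentation $(\mbor)^\#$ of $\mbor$ and produce a c.e. enumeration of the rational open balls of $(\mbor)^\#$ (in the sort $\mathcal{K}$) that meet $\mathcal{M}$.  Since no function symbol of $L^R$ has codomain $\mathcal{K}$, the generated points of $(\mbor)^\#$ in that sort coincide with the special points, so it suffices to enumerate pairs $(f_k,\epsilon)$ with $f_k$ a special point and $\epsilon>0$ rational.

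The central observation is that effective recognizability lets us substitute access to a constant $a\in\mathcal{M}$---which may not be a computable point of $(\mbor)^\#$---by access to the $L$-formula $\varphi_a(x)$ isolating $a$.  Since $\varphi_a(\mathcal{M})=\{a\}$, the definition of the randomization yields, in the pre-structure,
$$\llbracket\varphi_a(f)\rrbracket=\{\omega\in[0,1)\ :\ f(\omega)=a\},$$
and hence $\mu\llbracket\varphi_a(f)\rrbracket=1-d(f,a)$; this identity extends by continuity to all $f$ in the completion.  Crucially, because $L^R$ is a computable language, the algorithm of effective recognizability which produces $\varphi_{a}(x)$ from an $\mathcal{M}^\#$-index of $a$ also produces an index for the function symbol $\llbracket\varphi_{a}(\cdot)\rrbracket$ of $L^R$, and computability of $(\mbor)^\#$ then lets us approximate $\mu\llbracket\varphi_a(f_k)\rrbracket$ uniformly from the indices of $a$ and $f_k$.

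The algorithm is then immediate: enumerate triples $(n,k,\epsilon)$ with $a_n$ the $n$-th generated point of $\mathcal{M}^\#$ (note that $\{a_n\}_n$ exhausts $\mathcal{M}$), $f_k$ the $k$-th special point of $(\mbor)^\#$, and $\epsilon>0$ rational; using the recognizability algorithm and the computability of $(\mbor)^\#$, approximate $\mu\llbracket\varphi_{a_n}(f_k)\rrbracket$ from below, and enumerate $B(f_k;\epsilon)$ as soon as a lower bound exceeding $1-\epsilon$ appears.  The identity $\mu\llbracket\varphi_{a_n}(f_k)\rrbracket=1-d(f_k,a_n)$ makes correctness immediate: $B(f_k;\epsilon)\cap\mathcal{M}\neq\emptyset$ iff there is some $n$ with $d(f_k,a_n)<\epsilon$ iff for that $n$ the lower approximations eventually cross $1-\epsilon$, and the condition cannot be witnessed spuriously.

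The only nontrivial point---really the entire content of the lemma---is the conceptual move of using the recognition formula $\varphi_a$ as an effective surrogate for $a$ inside the randomization, via the function symbol $\llbracket\varphi_a(\cdot)\rrbracket$ of the already-computable language $L^R$.  Once this is in place the verification is routine, and no appeal to the induced presentation or to computable categoricity of $\BorI$ is required.
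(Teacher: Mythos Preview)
Your proposal is correct and follows essentially the same approach as the paper: both use the recognizing formula $\varphi_a$ to replace the constant $a$, observe that $\mu\llbracket\varphi_a(f)\rrbracket>1-\epsilon$ is equivalent to $d(f,a)<\epsilon$, and enumerate the rational balls meeting $\mathcal{M}$ by searching over all generated points $a$ of $\mathcal{M}^\#$ for a witness to this inequality. Your write-up is somewhat more explicit than the paper's---you spell out the identity $\mu\llbracket\varphi_a(f)\rrbracket=1-d(f,a)$ and its extension by continuity to the completion, and you note that the effective recognizability algorithm produces an index for the $L^R$-symbol $\llbracket\varphi_a(\cdot)\rrbracket$---but the underlying argument is the same.
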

\begin{proof}
    Suppose that $B(f;\epsilon)$ is a rational open ball and $a$ is a generated point of $\mathcal{M}$ with corresponding formula $\varphi_a(x)$.  One then proceeds to approximate $\mu\llbracket \varphi_a(\vec f)\rrbracket$; if one finds that this measure exceeds $1-\epsilon$, this implies that $f(\omega)=a$ for a set of $\omega$'s of measure at least $1-\epsilon$, that is, $d(f,a)<\epsilon$, whence one returns this ball $B(f;\epsilon)$ as intersecting $\mathcal{M}$.
\end{proof}

\section{Computable categoricity of certain randomizations}\label{appendix2}

In this section, we prove that $\mbor$ is computably categorical when $\cal M$ is ``effectively $\omega$-categorical'', a notion that we define below.  As a warm-up, we treat the special case that $\cal M$ is the two-element set with no extra structure, which yields that $\BorI$, is computably categorical, a result of interest in its own right.  We note that this result can be proven by modifying the techniques in \cite{CMS}; here we give a direct proof that will generalize to a larger class of randomizations.   

Suppose that $\vec A=(A_1,...,A_m)$ is a tuple from $\BorI$ and $p(x)\in S_1(\vec A)$.  By quantifier-elimination for atomless probability algebras \cite[Proposition 16.6]{BBHU}, setting $n=2^m$, there are $r_1,...,r_n\in[0,1]$, such that, for any $C\in \BorI$, we have that $C$ realizes $p(x)$ if and only if: 
    $$\max_{i=1,...,n}\{|\mu(C\cap A_1^{i(1)}\cap...\cap A_m^{i(m)})-r_i|\}=0.$$  Here, we interpret $A^1=A$ and $A^0=[0,1)\setminus A$.

Set $\psi_p(x,\vec A)$ to be the formula
$$\max_{i=1,...,n}\{|\mu(x\cap A_1^{i(1)}\cap...\cap A_m^{i(m)})-r_i|\},$$ so $C\in \BorI$ realizes $p(x)$ if and only if $\psi_p(C,\vec A)=0$; in other words, $\psi_p$ isolates $p$.  We first show that $\psi_p(x,\vec A)$ isolates $p$ in an effective way.

\begin{lem}\label{NearRealization}
   For any $\epsilon>0$ and $B\in \BorI$ for which $\psi_p(B, \vec A)<\dfrac{\epsilon}{n}$, there is $B'\in\BorI$ such that $B'$ realizes $p$ and $d(B,B')<\epsilon$. 
\end{lem}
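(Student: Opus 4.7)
The plan is to adjust $B$ atom-by-atom on the finite Boolean subalgebra generated by $\vec{A}$, using the atomlessness of $\BorI$ to tune each atom's contribution to exactly $r_i$, and then sum the local discrepancies to bound $d(B,B')$.

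More precisely, for each $i\in\{0,1\}^m$ let $E_i := A_1^{i(1)}\cap\cdots\cap A_m^{i(m)}$ and $s_i:=\mu(E_i)$, so the $E_i$ partition $[0,1)$. Since $p(x)$ is consistent, any realization $C$ satisfies $\mu(C\cap E_i)=r_i$ and $C\cap E_i\subseteq E_i$, so $0\le r_i\le s_i$ for every $i$. For each $i$, set $\delta_i := \mu(B\cap E_i)-r_i$; by hypothesis $|\delta_i|<\epsilon/n$. I would then modify $B$ only inside $E_i$ so as to kill $\delta_i$: if $\delta_i>0$, use atomlessness to choose $F_i\subseteq B\cap E_i$ with $\mu(F_i)=\delta_i$ and put $B_i':=(B\cap E_i)\setminus F_i$; if $\delta_i<0$, note
\[
  \mu(E_i\setminus B) \;=\; s_i-(r_i+\delta_i) \;=\; (s_i-r_i)+|\delta_i| \;\ge\; |\delta_i|,
\]
so atomlessness again yields $G_i\subseteq E_i\setminus B$ with $\mu(G_i)=|\delta_i|$, and we set $B_i':=(B\cap E_i)\cup G_i$; if $\delta_i=0$ take $B_i':=B\cap E_i$.

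Define $B':=\bigcup_{i=1}^n B_i'$. By construction $\mu(B'\cap E_i)=r_i$ for every $i$, whence $\psi_p(B',\vec A)=0$, so $B'$ realizes $p$. Furthermore, since the $E_i$ are disjoint and the modification inside each $E_i$ has symmetric difference of measure exactly $|\delta_i|$ with $B\cap E_i$, we get
\[
  d(B,B') \;=\; \sum_{i=1}^n \mu\bigl((B\cap E_i)\triangle B_i'\bigr) \;=\; \sum_{i=1}^n |\delta_i| \;<\; n\cdot\tfrac{\epsilon}{n} \;=\; \epsilon.
\]

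The only subtle point is the inequality $r_i\le s_i$, which must be extracted from the consistency of $p$ (i.e.\ from the fact that $p$ is realized in some elementary extension of $\BorI$); everything else is a routine application of atomlessness. The factor of $n$ in the hypothesis is exactly what pays for summing the $n$ local errors, so the constant is sharp for this argument.
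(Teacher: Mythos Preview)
Your argument is correct and is essentially identical to the paper's proof: both partition $[0,1)$ into the atoms $E_i=A_1^{i(1)}\cap\cdots\cap A_m^{i(m)}$, adjust $B\cap E_i$ up or down by a set of measure $|\mu(B\cap E_i)-r_i|$ using atomlessness, and sum the $n$ local errors. If anything, your write-up is slightly more careful than the paper's, since you explicitly invoke the consistency of $p$ to obtain $r_i\le s_i$ and thereby justify that $E_i\setminus B$ has enough room in the case $\delta_i<0$; the paper leaves this implicit.
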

\begin{proof}
        Suppose that $\psi_p(B, \vec A)<\dfrac{\epsilon}{n}$.
    For each $i=1,...,n$, do the following:
    \begin{itemize}
        \item[i)] If $\mu(B\cap A_1^{i(1)}\cap...\cap A_m^{i(m)})-r_i>0$, set 
        $$B_i'=(B\cap A_1^{i(1)}\cap...\cap A_m^{i(m)})\backslash D_i,$$
        where $D_i\subseteq B\cap A_1^{i(1)}\cap...\cap A_m^{i(m)}$ and $\mu(D_i)=\mu(B\cap A_1^{i(1)}\cap...\cap A_m^{i(m)})-r_i.$ 
        \item[ii)] If $\mu(B\cap A_1^{k_1(i)}\cap...\cap A_m^{k_m(i)})-r_i<0$, set 
        $$B_i'=(B\cap A_1^{i(1)}\cap...\cap A_m^{i(m)})\cup D_i,$$
        where $D_i\subseteq  A_1^{i(1)}\cap...\cap A_m^{i(m)}$, $D_i\cap(B\cap A_1^{i(1)}\cap...\cap A_m^{i(m)})=\emptyset$ and $\mu(D_i)=r_i-\mu(B\cap A_1^{i(1)}\cap...\cap A_m^{i(m)}).$
        \item[iii)] If $\mu(B\cap A_1^{i(1)}\cap...\cap A_m^{i(m)})-r_i=0$, set
        $$B_i'=B\cap A_1^{i(1)}\cap...\cap A_m^{i(m)}.$$
    \end{itemize}
    Set $B'=\displaystyle\bigcup_{i=1}^nB_i'\in\BorI$.  By construction, $B'$ realizes $p$. Furthermore, note that
    $$ d(B,B')=\sum_{i=1}^nd\left(B\cap A_1^{i(1)}\cap...\cap A_m^{i(m)}, B_i'\right)<\epsilon.\qedhere$$
    \end{proof}
\begin{lem}\label{realizationsce}
    Suppose that $\BorI^\#$ is a computable presentation of $\BorI$, $\vec A$ a tuple of $\BorI^\#$-computable elements of $\BorI$, and $p(x)\in S_1(\vec A)$.  Further suppose that the corresponding real numbers $r_1,\ldots,r_n$ are computable real numbers.  Then the set of realizations of $p$ in $\BorI$ is a c.e. closed subset of $\BorI^\#$.  Moreover, an index for the set of realizations of $p$ can be computed from indices for $A_1,\ldots,A_m$ and $r_1,\ldots,r_n$.
\end{lem}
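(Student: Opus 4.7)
The plan is to enumerate directly the rational open balls of $\BorI^\#$ that meet the realization set $R$ of $p$, using Lemma \ref{NearRealization} as the engine. The essential observation is that $\psi_p(\cdot, \vec A)$ is a $1$-Lipschitz function on $\BorI$: each map $x \mapsto \mu(x \cap A_1^{i(1)} \cap \cdots \cap A_m^{i(m)})$ is $1$-Lipschitz in the probability algebra metric, and subtracting a constant, taking $|\cdot|$, and taking $\max$ all preserve this Lipschitz constant. In particular, $\psi_p(f, \vec A) \le d(f, B')$ whenever $B' \in R$, which is the bound that will drive the completeness argument.

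First I would observe that $\psi_p(f, \vec A)$ is uniformly computable from a generated point $f$ of $\BorI^\#$ together with indices for the $A_j$ and the $r_i$: the values $\mu(f \cap A_1^{i(1)} \cap \cdots \cap A_m^{i(m)})$ are uniformly computable because the Boolean operations are function symbols, $\vec A$ is computable, and $\mu$ is computable in $\BorI^\#$; then $\psi_p$ is produced by a fixed composition of computable real operations. Consequently, the set of pairs $(f, \epsilon)$ with $f$ a generated point, $\epsilon$ a positive rational, and $\psi_p(f, \vec A) < \epsilon/n$ is c.e.; I call each such $B(f;\epsilon)$ a \emph{witnessing ball}. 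The algorithm will enumerate every rational open ball $B(g;\delta)$ that formally includes at least one witnessing ball; this set is c.e.\ since formal inclusion is c.e.\ in the (weakly) computable presentation $\BorI^\#$.

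For soundness, if $B(g;\delta)$ formally includes a witnessing ball $B(f;\epsilon)$, then Lemma \ref{NearRealization} produces $B' \in R$ with $d(f, B') < \epsilon$, and $d(f, g) + \epsilon < \delta$ gives $B' \in B(g;\delta)$. For completeness, suppose $B(g;\delta)$ meets $R$ via some $B'$, and pick $\eta > 0$ with $d(g, B') < \delta - \eta$. Choose $\alpha > 0$ with $(n+1)\alpha < \eta$, find a generated point $f$ with $d(f, B') < \alpha$, and pick a rational $\epsilon$ with $n\alpha < \epsilon < \eta - \alpha$. The Lipschitz bound yields $\psi_p(f, \vec A) \le d(f, B') < \alpha < \epsilon/n$, so $B(f;\epsilon)$ is witnessing, while $d(f,g) + \epsilon \le d(f, B') + d(B', g) + \epsilon < \alpha + (\delta - \eta) + \epsilon < \delta$ gives formal inclusion in $B(g;\delta)$. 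The uniformity claim is immediate since the algorithm takes the indices for $\vec A$ and $r_1, \ldots, r_n$ only as parameters. The most delicate step is this completeness calibration: one must choose $\alpha$ and $\epsilon$ so that the witnessing ball simultaneously satisfies $\psi_p(f, \vec A) < \epsilon/n$ and slots formally inside $B(g;\delta)$, which is exactly what the constraints $(n+1)\alpha < \eta$ and $n\alpha < \epsilon < \eta - \alpha$ accomplish.
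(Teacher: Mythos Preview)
Your proof is correct and follows essentially the same strategy as the paper: both use Lemma~\ref{NearRealization} for soundness and continuity of $\psi_p$ for completeness, enumerating balls whose center has small $\psi_p$-value and propagating outward. Your packaging is slightly cleaner---you make the $1$-Lipschitz bound explicit and absorb the approximation of $\vec A$ by generated points into the single assertion that $\psi_p(f,\vec A)$ is a uniformly computable real, whereas the paper carries the approximating tuple $\vec A_\delta$ and its error term $\delta$ through the estimates by hand---but the underlying argument is the same.
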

\begin{proof}
Let $\mathcal{G}$ denote the set of generated points of $\BorI^\#$. Let $\Omega$ denote the set of realizations of $p$, so $\Omega=\{E\in\BorI\ : \ \psi_p(E,\vec A)=0\}.$  We now describe an algorithm which enumerates all pairs $(C,\epsilon)$, where $C\in\mathcal G$ and $\epsilon>0$ is rational, for which $B(C,\epsilon)\cap\Om\neq\emptyset$.

We first note that, since $\vec A$ is computable, given rational $\delta>0$, one can compute a tuple of generated points $\vec A_\delta$ such that $d(\vec A,\vec A_\de)<\Delta_{\psi_p}(\de)$, and thus, for every $C\in\mathcal G$, we have $|\psi_p(C,\vec A)-\psi_p(C,\vec A_\de)|<\de.$

Here is the desired algorithm:  for each pair $(C,\epsilon)$ as above, we infinitely often return to considering this pair by searching for $D\in \mathcal G$ and rational $\delta>0$ such that $d(C,D)<\epsilon$ and $\psi_p(D,\vec A_\de)+\de<(\epsilon-d(C,D))/2^{|\vec A|}.$  Note that we can indeed effectively approximate $\psi_p(D,\vec A_\delta)$ since the presentation is computable.

First suppose we find such $D$ and $\delta$; we need to know that $B(C,\epsilon)\cap \Omega\not=\emptyset$.  We know that $\psi_p(D,\vec A)<(\epsilon-d(C,D))/2^{|\vec A|}$ and thus, by Lemma \ref{NearRealization}, there is $D'\in \Omega$ such that $d(D,D')<\epsilon-d(C,D)$; it follows that $$d(C,D')\leq d(C,D)+d(D,D')<\epsilon$$ and so $D'\in B(C,\epsilon)\cap \Omega$.

We now need to verify that if $B(C,\epsilon)\cap \Omega\not=\emptyset$, then this algorithm will eventually let us know this fact.  Take $E\in B(C,\epsilon)\cap \Omega$.  Since $\psi_p(E,\vec A)=0$, there is $D\in\mathcal G$ such that $d(C,D)<\epsilon$ and $\psi_p(D,\vec A)<\eta/2^{|\vec A|}$ for rational $\eta<\epsilon-d(C,D)$; it follows that, for sufficientlly small $\delta$, that $\psi_p(D,\vec A_\delta)+\delta<\epsilon-d(C,D)$ and our search will succeed.

Note finally that the uniformity statement holds by the description of the algorithm.
\end{proof}

The following is a general fact from effective metric structure theory (see \cite[Lemma 1.10]{UHF}):

\begin{fact}\label{cefact}
Suppose that $\mathcal N^\#$ is a computable presentation of a metric structure $\mathcal N$.  Then every nonempty c.e. closed subset $X$ of $\mathcal N^\#$ contains a computable point; moreover, an index for the computable point can be computed from an index for $X$. 
\end{fact}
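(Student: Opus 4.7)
The plan is to build a computable point in $X$ as the limit of a rapidly Cauchy sequence of generated points of $\mathcal{N}^\#$, constructed effectively from an index for $X$. Let $(U_k)_{k \in \mathbb{N}}$ denote the c.e.\ list of rational open balls of $\mathcal{N}^\#$ that intersect $X$; an index for this list is exactly what it means to have an index for $X$. Recall also that on a computable presentation the formal inclusion relation between two rational open balls is c.e., since the condition $d(p,q) + \epsilon < \delta$ is $\Sigma^0_1$ on generated points (as $d(p,q)$ is a computable real).

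The construction proceeds in stages. At stage $n$, I produce an index $k_n$ for a ball $U_{k_n} = B(p_n; r_n)$ with $r_n < 2^{-n}$ such that $U_{k_{n+1}}$ is formally included in $U_{k_n}$. Stage $0$ just searches the enumeration $(U_k)$ for any ball of radius less than $1$. At stage $n+1$, I search in parallel through indices $k$, waiting for some $U_k$ of radius less than $2^{-(n+1)}$ to appear both on the c.e.\ list of balls intersecting $X$ and on the c.e.\ list of balls formally included in $U_{k_n}$; when such a $k$ is found, set $k_{n+1} := k$. The procedure is uniform in an index for $X$.

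The key thing to verify is that the stage $n+1$ search always terminates. Fix any $x \in B(p_n; r_n) \cap X$, set $\delta := r_n - d(p_n, x) > 0$, and pick a generated point $q$ with $d(q, x) < \min(\delta/3, 2^{-(n+2)})$. Then the ball $B(q; \min(\delta/2, 2^{-(n+1)}))$ contains $x$, so intersects $X$, and a direct triangle inequality check shows that it is formally included in $B(p_n; r_n)$. Such a ball must therefore eventually appear on both enumerations, after which the search halts.

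Finally, formal inclusion combined with the radius bounds gives $d(p_n, p_{n+1}) < r_n < 2^{-n}$, so $(p_n)$ is Cauchy and converges to some $x^* \in \mathcal{N}$ with $d(x^*, p_n) \leq r_n$ for every $n$; this already witnesses that $x^*$ is a computable point of $\mathcal{N}^\#$, with an index obtained uniformly from the index for $X$. To see that $x^* \in X$, observe that the sets $\overline{B(p_n; r_n)} \cap X$ form a decreasing sequence of nonempty closed subsets of the complete metric space $X$ with diameters tending to zero; Cantor's intersection theorem furnishes a common point, which must equal $x^*$. The main subtlety is the termination argument, where one must be careful that the slack $\delta$ really does leave room for a suitably small generated ball inside $B(p_n; r_n)$; everything else is routine bookkeeping.
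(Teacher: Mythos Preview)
The paper does not supply its own proof of this fact; it is quoted as a known result from effective metric structure theory, with a citation to \cite[Lemma~1.10]{UHF}. Your argument is the standard one and is correct in outline.

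One small wrinkle in the termination step: the witness ball you produce, $B\bigl(q;\min(\delta/2,2^{-(n+1)})\bigr)$, need not be a \emph{rational} open ball of $\mathcal{N}^\#$ (the number $\delta/2$ may be irrational), and when $\delta/2\geq 2^{-(n+1)}$ its radius equals $2^{-(n+1)}$ rather than being strictly smaller, so it would not be picked up by your search. The repair is immediate: choose any rational $\rho$ with $d(q,x)<\rho<\min(\delta/2,2^{-(n+1)})$ and use $B(q;\rho)$ instead. This ball is a rational open ball, contains $x$, has radius strictly below $2^{-(n+1)}$, and the same triangle-inequality computation shows it is formally included in $B(p_n;r_n)$. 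With that adjustment the argument is complete.
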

\begin{thm}\label{BorIcompcat}
      $\BorI$ is computably categorical.
\end{thm}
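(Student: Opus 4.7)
The plan is to prove that any two computable presentations $\BorI^{\#_1}$ and $\BorI^{\#_2}$ of $\BorI$ are computably isomorphic by an effective back-and-forth construction, leveraging the effective isolation of types from Lemma \ref{NearRealization}, the uniform c.e.\ closedness of realization sets from Lemma \ref{realizationsce}, and Fact \ref{cefact} to extract computable witnesses at each stage.

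Fix effective enumerations $(A_n)_{n \in \bb N}$ and $(B_n)_{n \in \bb N}$ of the generated points of $\BorI^{\#_1}$ and $\BorI^{\#_2}$, respectively. I will construct, uniformly in the stage $k$, tuples $\vec a^{(k)}$ in $\BorI^{\#_1}$ and $\vec b^{(k)}$ in $\BorI^{\#_2}$ of length $k$ satisfying: (i) $\vec a^{(k)}$ and $\vec b^{(k)}$ realize the same complete type in the theory of atomless probability algebras; (ii) the entries of $\vec a^{(k)}$ are uniformly $\BorI^{\#_1}$-computable and those of $\vec b^{(k)}$ are uniformly $\BorI^{\#_2}$-computable; and (iii) every $A_n$ eventually appears in some $\vec a^{(k)}$ and every $B_n$ eventually appears in some $\vec b^{(k)}$.

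At an even stage $k = 2n$, given tuples $\vec a^{(k)}$ and $\vec b^{(k)}$ as above, I adjoin $A_n$ on the left. Using computability of $\BorI^{\#_1}$, compute the reals $r_i = \mu\!\left(A_n \cap (a_1^{(k)})^{i(1)} \cap \cdots \cap (a_k^{(k)})^{i(k)}\right)$ for each $i \in \{0,1\}^k$; by quantifier-elimination for atomless probability algebras these determine the complete type $p(x)$ of $A_n$ over $\vec a^{(k)}$, and hence a corresponding type $p'(x)$ over $\vec b^{(k)}$ specified by the same numerical data. By Lemma \ref{realizationsce} applied inside $\BorI^{\#_2}$, one obtains a $\BorI^{\#_2}$-index for the c.e.\ closed set $\Omega$ of realizations of $p'$, which is nonempty since $\BorI$ realizes every consistent type over a finite tuple; Fact \ref{cefact} then yields a $\BorI^{\#_2}$-computable point to take as the new entry of $\vec b^{(k+1)}$, while the new entry of $\vec a^{(k+1)}$ is $A_n$ itself. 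Odd stages adjoin $B_n$ on the right by the symmetric procedure.

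The assignment $a_i^{(k)} \mapsto b_i^{(k)}$ then determines a well-defined, measure-preserving Boolean map on a dense subset of $\BorI^{\#_1}$: density follows from (iii), and the matching of quantifier-free types guarantees that the value on any Boolean combination is independent of which stage witnesses it. It is computable by (ii) and the construction, and its inverse is computable by symmetry; its continuous extension is the desired computable isomorphism from $\BorI^{\#_1}$ to $\BorI^{\#_2}$. The main obstacle is really absorbed by Lemma \ref{realizationsce}: once the set of realizations of a type with computable numerical data is known to be c.e.\ closed with a uniformly computable index, Fact \ref{cefact} mechanically furnishes the computable witness needed at each back-and-forth step.
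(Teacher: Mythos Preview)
Your proposal is correct and follows essentially the same approach as the paper: an effective back-and-forth in which, at each stage, quantifier elimination reduces the type to finitely many computable reals, Lemma~\ref{realizationsce} turns the realization set into a c.e.\ closed set, and Fact~\ref{cefact} extracts a computable realization to extend the partial elementary map. The paper phrases the construction in terms of an increasing chain of partial elementary maps $\phi_n$ rather than paired tuples, and invokes countable saturation explicitly for nonemptiness of the realization set, but these are purely cosmetic differences.
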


\begin{proof} 
Fix computable presentations $\BorI^\#$ and $\BorI^\dagger$ of $\BorI$.  Let $(A_n)$ and $(B_n)$ be effective enumerations of the generated points of $\BorI^\#$ and $\BorI^\dagger$ respectively.  We effectively build a sequence $(\phi_n)$ of partial elementary maps from $\BorI$ to $\BorI$, where each domain is finite and such that:
\begin{itemize}
    \item For each $n$, $\phi_n\subseteq \phi_{n+1}$.
    \item If $n=2s$, then $A_s$ is in the domain of $\phi_n$.
    \item If $n=2s+1$, then $B_s$ is in the range of $\phi_n$.
    \item The domain of $\phi_n$ is a finite tuple of $\BorI^\#$-computable points, and the codes of these points can be computed from $n$.
    \item The range of $\phi_n$ is a finite tuple of $\BorI^\dagger$-computable points, and the indices of these points can be computed from $n$.
\end{itemize}
Suppose that $\phi_{n-1}$ has been defined.  We treat only the case $n=2s$, the other case being handled similarly.  Let $\{C_1,\ldots,C_{n-1}\}$ denote the domain of $\phi_{n-1}$.  Let $p_0(x)=\operatorname{tp}(A_n/\vec C)$ and let $p(x):=\phi_{n-1}(p_0)$; in other words, $p\in S_1(\phi_{n-1}(\vec C))$ is the type determined by the formula $\psi_p(x)$ obtained by replacing each occurrence of $C_i$ with $\phi_{n-1}(C_i)$; since $\phi_{n-1}$ is partial elementary, $p$ is indeed a consistent type.  Note also that the real numbers in $\psi_p$ are computable, and their indices can be computed from $n$ and indices for $C_1,\ldots,C_{n-1}$, which, by induction, are computable from $n-1$.  By countable saturation, $p$ is realized in $\BorI$.  By Lemma \ref{realizationsce} and Fact \ref{cefact}, we can compute the index of a $\BorI^\dagger$-computable point $D$ that realizes $p$, and we define $\phi_n:=\phi_{n-1}\cup\{(A_n,D)\}$.

Set $\Phi_0:=\bigcup_n \phi_n$.  Then $\Phi_0$ is a partial elementary map from $\BorI$ to $\BorI$ with dense domain and range.  It follows that $\Phi_0$ extends to an automorphism $\Phi$ of $\BorI$.  Moreover, by construction, the $\Phi$-image of each $\BorI^\#$-rational point is uniformly $\BorI^\dagger$-computable, whence $\Phi$ is a computable isomorphism.
\end{proof}


We now proceed to generalize this argument to certain randomizations $\mbor$.  Fix a Borel randomization $\mbor$, a tuple $\vec f=(f_1,\ldots,f_n)\in \cal K$, and a 1-type $p\in S_1(\vec f)$.  By quantifier-elimination for randomizations (see the next section for an effective version), the type $p(\vec X)$ is determined by the values of the formulae $\mu\llbracket \varphi(X,\vec f)\rrbracket$ as $\varphi(x,y_1,\ldots,y_n)$ ranges over all $L$-formulae.  In general, this is an infinite amount of information and an argument as in the case of $\BorI$ is not possible.

However, if we assume that $\cal M$ is $\omega$-categorical, then we can reduce this amount of information to a single formula.  Indeed, let $\theta_i(x,\vec y)$, $i=1,\ldots,m$, isolate the finitely many elements of $S_{n+1}(\operatorname{Th}(\cal M))$.  Then $p$ is determined by the formulae $\mu\llbracket \theta_i(X,\vec f)\rrbracket$ for $i=1,\ldots,m$.\footnote{This is clear for actual random variables, and the case of a general tuple $\vec f$ from $\cal K$ follows by continuity.}  In other words, there are real numbers $r_1,\ldots,r_m$ such that, setting $\psi_p(X,\vec Y)$ to be $$\max_{i=1,\ldots,m}\mu |\llbracket\theta_i(X,\vec Y)\rrbracket-r_i|,$$ we have that $g\in \cal K$ realizes $p$ if and only if $\psi_p(g,\vec f)=0$.  Moreover, just as in the case of $\BorI$, we have:

\begin{lem}
For any $\epsilon>0$ and $g\in \cal K$, if $\psi_p(g,\vec f)<\frac{\epsilon}{m}$, then there is $g'\in \cal K$ such that $g'$ realizes $p$ and $d(g,g')<\epsilon$.\footnote{The proof in the case that $g$ is a random variable mimics that of the proof of Lemma \ref{NearRealization}; the proof of the general case follows by approximating an arbitrary element of $\cal K$ by a random variable.}
\end{lem}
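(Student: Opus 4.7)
Following the footnote, the plan is to treat first the case in which $g$ is a random variable, and then to obtain the general case by approximating arbitrary elements of $\mathcal{K}$. So I would momentarily assume $g$ and $\vec f$ are all random variables. Because $\mathcal{M}$ is $\omega$-categorical, $S_n(\Th(\mathcal{M}))$ is finite; let $\tau_1,\ldots,\tau_k$ isolate its elements and set $E_j:=\llbracket\tau_j(\vec f)\rrbracket$, obtaining a Borel partition of $[0,1)$. Each $\theta_i$ extends a unique $\tau_{j(i)}$, so the sets $F_i:=\llbracket\theta_i(g,\vec f)\rrbracket$ refine each $E_j$, and $\sum_{i:j(i)=j}\mu(F_i)=\mu(E_j)=\sum_{i:j(i)=j}r_i$; the second equality follows from the fact that $p$, being consistent, has some realization.

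Within each $E_j$, I would mimic the balancing step from the proof of Lemma~\ref{NearRealization}. For each $i$ with $j(i)=j$ and $\mu(F_i)>r_i$, pick a Borel $D_i\subseteq F_i$ of measure $\mu(F_i)-r_i$, and set $D^{(j)}:=\bigcup_i D_i$. For each $i$ with $j(i)=j$ and $\mu(F_i)<r_i$, pick pairwise disjoint Borel $C_i\subseteq D^{(j)}$ of measure $r_i-\mu(F_i)$, which exhaust $D^{(j)}$ by the balance identity above. Then define $g'$ to equal $g$ off the donor sets and to equal $a_{i,\omega}$ on each $C_i$, where $a_{i,\omega}\in\mathcal{M}$ is the first element (in any fixed enumeration) with $\mathcal{M}\models\theta_i(a_{i,\omega},\vec f(\omega))$; such an element exists because $\theta_i$ extends $\tau_{j(i)}$, and $\omega\mapsto a_{i,\omega}$ is Borel since its level sets are Boolean combinations of preimages under $\vec f$. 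By construction $\mu\llbracket\theta_i(g',\vec f)\rrbracket=r_i$ for every $i$, so $g'$ realizes $p$, and a mass-balance count will give
$$d(g,g')\leq\sum_j\mu(D^{(j)})=\tfrac{1}{2}\sum_{i=1}^m|\mu(F_i)-r_i|\leq\tfrac{m}{2}\psi_p(g,\vec f)<\tfrac{\epsilon}{2}.$$

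For the general case, the plan is to approximate $g$ by a random variable $h$ with $d(g,h)<\delta$ for some $\delta$ to be chosen (e.g.\ $\delta:=\epsilon/(2m+4)$); since $X\mapsto\mu\llbracket\theta_i(X,\vec f)\rrbracket$ is $1$-Lipschitz, we have $\psi_p(h,\vec f)<\epsilon/m+\delta$. Applying the random-variable case to $h$ will produce $g'\in\mathcal{K}$ realizing $p$ with $d(h,g')<\tfrac{m}{2}(\epsilon/m+\delta)$, and the triangle inequality will then give $d(g,g')<\epsilon$. The main obstacle, relative to Lemma~\ref{NearRealization}, is the measurable-selection step: instead of the purely Boolean adjustments available in the probability-algebra setting, one must actively reassign $\mathcal{M}$-values along Borel subsets of $[0,1)$ so that the new tuple $(g'(\omega),\vec f(\omega))$ realizes a prescribed $(n+1)$-type. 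This is harmless only because $\mathcal{M}$ is countable and discrete and $\vec f$ is Borel measurable.
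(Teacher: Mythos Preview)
Your random-variable argument is correct and supplies the details the paper's footnote omits; the key ingredient beyond Lemma~\ref{NearRealization}---organising the redistribution within the blocks $E_j=\llbracket\tau_j(\vec f)\rrbracket$ and exploiting the balance identity $\sum_{i:j(i)=j}r_i=\mu(E_j)$---is exactly what is needed to make the reassignment well-defined, and your distance estimate $d(g,g')\le\frac{m}{2}\psi_p(g,\vec f)$ is in fact sharper than required.

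There is, however, a gap in your passage to the general case. You open by assuming \emph{both} $g$ and $\vec f$ are random variables, yet in the final paragraph you only approximate $g$ by a random variable $h$ and then invoke ``the random-variable case''; that case is unavailable unless $\vec f$ is also a random variable, because your construction of $g'$ explicitly evaluates $\vec f(\omega)$ when selecting $a_{i,\omega}$. Approximating $\vec f$ as well does not immediately rescue the argument: the balance identity $\sum_{i:j(i)=j}r_i=\mu\llbracket\tau_j(\vec f')\rrbracket$ can fail for an approximant $\vec f'$, so the fixed targets $r_i$ need not correspond to any realizable type over $\vec f'$, and hence your construction cannot produce a $g'$ exactly realizing $p$. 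The cleanest repair is to drop pointwise selection and work in the completion throughout: by $\omega$-categoricity one has $E_{j(i)}\subseteq\llbracket\exists x\,\theta_i(x,\vec f)\rrbracket$, so fullness (Fact~\ref{fullness}, taken to the limit) yields $g_i\in\mathcal K$ with $\llbracket\theta_i(g_i,\vec f)\rrbracket\supseteq E_{j(i)}$, and one then defines $g'$ by gluing $g$ outside $\bigcup_jD^{(j)}$ with $g_i$ on each $C_i$. All of these operations make sense for arbitrary $g,\vec f\in\mathcal K$, and with this modification no approximation step is needed at all.
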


\begin{lem}
    Suppose that $(\mbor)^\#$ is a computable presentation of $\mbor$, $\vec f$ a tuple of $(\mbor)^\#$-computable elements of $\mbor$, and $p(X)\in S_1(\vec f)$.  Further suppose that the corresponding real numbers $r_1,\ldots,r_m$ are computable real numbers.  Then the set of realizations of $p$ in $\mbor$ is a c.e. closed subset of $(\mbor)^\#$.  Moreover, an index for the set of realizations of $p$ can be computed from indices for $f_1,\ldots,f_m$ and $r_1,\ldots,r_m$.
\end{lem}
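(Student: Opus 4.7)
The plan is to mirror the proof of Lemma \ref{realizationsce} step-by-step, using the preceding ``near-realization'' lemma for randomizations in place of Lemma \ref{NearRealization}. Let $\mathcal{G}$ denote the set of generated points of $(\mbor)^\#$ and set $\Omega := \{g \in \cal K : \psi_p(g, \vec f) = 0\}$; since $\psi_p$ is continuous, $\Omega$ is automatically closed. I aim to c.e.\ enumerate those pairs $(C, \epsilon)$ with $C \in \mathcal{G}$ and rational $\epsilon > 0$ for which $B(C, \epsilon) \cap \Omega \neq \emptyset$.

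First I would observe that $\psi_p$ is computable with respect to $(\mbor)^\#$ on generated inputs: the presentation is computable and each atomic term $\mu\llbracket\theta_i(\cdot,\cdot)\rrbracket$ appearing in $\psi_p$ is a predicate symbol of $L^R$; combined with the computability of $r_1,\ldots,r_m$, this lets us approximate $\psi_p(D,\vec g)$ arbitrarily well on generated tuples. Since $\vec f$ is $(\mbor)^\#$-computable, for each rational $\delta > 0$ I can effectively produce $\vec f_\delta \in \mathcal{G}$ with $d(\vec f, \vec f_\delta)$ smaller than a modulus of uniform continuity for $\psi_p$, so that $|\psi_p(D,\vec f) - \psi_p(D,\vec f_\delta)| < \delta$ uniformly in $D \in \mathcal{G}$.

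The algorithm then runs exactly as in Lemma \ref{realizationsce}: for each candidate $(C,\epsilon)$, dovetail a search for $D \in \mathcal{G}$ and rational $\delta > 0$ with $d(C,D) < \epsilon$ and $\psi_p(D,\vec f_\delta) + \delta < (\epsilon - d(C,D))/m$, enumerating $(C,\epsilon)$ as soon as such $(D,\delta)$ is found. For soundness: when the search succeeds we have $\psi_p(D,\vec f) < (\epsilon - d(C,D))/m$, so the preceding near-realization lemma yields $D' \in \Omega$ with $d(D,D') < \epsilon - d(C,D)$, hence $d(C,D') < \epsilon$. For completeness: if $E \in B(C,\epsilon) \cap \Omega$, I would pick rational $\eta \in (0, \epsilon - d(C,E))$, then a generated $D$ close enough to $E$ that both $d(C,D) < \epsilon$ and $\psi_p(D,\vec f) < \eta/m$ (using continuity of $\psi_p$ in its first coordinate, whose modulus is computable), and finally $\delta > 0$ small enough that $\psi_p(D,\vec f_\delta) + \delta < (\epsilon - d(C,D))/m$; the search then succeeds. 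Uniformity is visible from the description, which depends only on indices for $\vec f$ and the $r_i$.

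The main point to watch, and the chief potential obstacle, is that a realization $E \in \Omega$ need not itself be a random variable, let alone a generated point, whereas $\psi_p$ has only been argued to be directly computable on generated inputs. This is sidestepped by never evaluating $\psi_p$ at $E$ itself: the completeness half of the argument uses only continuity of $\psi_p$ in its first argument to replace $E$ by a nearby generated $D$ on which $\psi_p$ is simultaneously small and effectively approximable.
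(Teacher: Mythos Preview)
Your proposal is correct and takes exactly the approach the paper intends: the paper states this lemma without proof, relying on the reader to carry over the argument of Lemma \ref{realizationsce} with the randomization near-realization lemma substituted for Lemma \ref{NearRealization} and $m$ in place of $2^{|\vec A|}$. Your write-up does precisely this, and your final paragraph isolating the one genuine subtlety (that realizations $E$ need not be generated points, handled by passing to a nearby generated $D$ via continuity) is a useful clarification that the paper leaves implicit.
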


In order to carry out the back-and-forth argument as in the proof of Theorem \ref{BorIcompcat}, we need to add one extra condition to $\cal M$:

\begin{defn}\label{effomega}
Let $\cal M$ be a classical $\omega$-categorical structure.  We say that $\cal M$ is \textbf{effectively $\omega$-categorical} if there is an algorithm such that, upon input $n\in \mathbb N$, returns a finite list of formulae $\theta_1,\ldots,\theta_m$ that isolate the finitely many elements of $S_{n+1}(T)$.
\end{defn}

\begin{example}
If $\cal M$ is a structure in a finite relational language with quantifier elimination, then $\cal M$ is effectively $\omega$-categorical.
\end{example}

The reason we need to assume that $\cal M$ is effectively $\omega$-categorical is the following.  Work in the notation of the proof of Theorem \ref{BorIcompcat} (but adapted to the context of randomizations).  We have at this point constructed a partial elementary map $\phi_{n-1}$ with domain $\{f_1,\ldots,f_{n-1}\}$, a finite set of $(\mbor)^\#$-computable points whose indices can be computed from $n-1$; we wish to add a $(\mbor)^\#$-generated point $g$ to the domain.  We consider $p_0(X):=\operatorname{tp}(g/\vec f)$ and set $p(X):=\phi_{n-1}(p_0)$.  In order to find a $(\mbor)^\dagger$ code for the set of realizations of $p$, we need to compute the formula that isolates $p_0$, that is, we need to be able to compute the finite set of formulae isolating the relevant set of types in $\operatorname{Th}(\cal M)$.  This is precisely what effective $\omega$-categoricity affords us.  Thus, we may repeat the proof of Theorem \ref{BorIcompcat} mutatis mutandis, obtaining:

\begin{thm}
Suppose that $\cal M$ is an effectively $\omega$-categorical theory.  Then $\mbor$ is computably categorical.
\end{thm}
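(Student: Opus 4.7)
The plan is to adapt the back-and-forth construction from Theorem \ref{BorIcompcat} \emph{mutatis mutandis}, with the real-parameter data governing a type now coming from the finite list of $\omega$-categorical isolating formulas rather than from the quantifier-free Boolean data available in $\BorI$. Fix two computable presentations $(\mbor)^\#$ and $(\mbor)^\dagger$ of $\mbor$, together with effective enumerations $(f_n)$ and $(g_n)$ of their respective generated points. I would build an effective sequence $(\phi_n)$ of partial elementary maps between them with finite domain and range, each extending the previous, arranged by the usual parity alternation so that $f_s$ enters the domain at stage $2s$ and $g_s$ enters the range at stage $2s+1$. At each stage the domain will consist of uniformly $(\mbor)^\#$-computable points whose indices can be computed from the stage number, and symmetrically for the range.

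The key step is extending $\phi_{n-1}$, with current domain $\{h_1,\ldots,h_{n-1}\}$, by adjoining a new point $h$ from the appropriate enumeration; say we are at an even stage. Set $p_0(X) := \operatorname{tp}(h/\vec h)$ and $p(X) := \phi_{n-1}(p_0)$, a 1-type over $\phi_{n-1}(\vec h)$ in $\mbor$. Applying the algorithm provided by effective $\omega$-categoricity to the input $n$, I obtain the finite list of formulas $\theta_1,\ldots,\theta_m$ isolating the elements of $S_{n+1}(\Th(\cal M))$, so that $p$ is pinned down by the values $r_i := \mu\llbracket \theta_i(h,\vec h)\rrbracket$ through the isolating formula $\psi_p$ introduced above. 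Because any computable presentation of $\mbor$ is automatically decidable via the effective quantifier-elimination of Section \ref{appendix1}, each $r_i$ is uniformly computable from $n-1$ and the indices of $\vec h$, and the indices for $\phi_{n-1}(\vec h)$ in $(\mbor)^\dagger$ are available by induction.

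With $\psi_p$ and the computable reals $r_1,\ldots,r_m$ in hand, the randomization version of Lemma \ref{realizationsce} stated just above the theorem yields a $(\mbor)^\dagger$-index for the c.e.\ closed set of realizations of $p$; Fact \ref{cefact} then extracts a $(\mbor)^\dagger$-computable realization $h'$, and I set $\phi_n := \phi_{n-1}\cup\{(h,h')\}$. Odd stages are handled symmetrically. The union $\Phi_0 := \bigcup_n \phi_n$ is a partial elementary map with dense domain and range, hence extends to an automorphism $\Phi$ of $\mbor$; the construction guarantees that $\Phi$ is a computable isomorphism from $(\mbor)^\#$ to $(\mbor)^\dagger$.

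The principal obstacle, and the very reason effective $\omega$-categoricity (rather than plain $\omega$-categoricity) is assumed, is maintaining uniformity across all stages: the finitely many formulas $\theta_1,\ldots,\theta_m$ isolating the types in $S_{n+1}(\Th(\cal M))$ must be produced from $n$ by an algorithm, or else at the critical stage one could not compute the parameters $r_i$, the formula $\psi_p$, or ultimately a $(\mbor)^\dagger$-index for a realization of $p$, and the resulting $\Phi_0$ would fail to be a computable map. Everything else — computability of the $r_i$, c.e.\ approximability of the realization set, and extraction of a computable point via Fact \ref{cefact} — is already in place from the preparatory material preceding the theorem.
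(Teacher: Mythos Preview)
Your proposal is correct and matches the paper's approach essentially verbatim; the paper itself merely says to repeat the proof of Theorem \ref{BorIcompcat} \emph{mutatis mutandis}, after explaining (just as you do) that effective $\omega$-categoricity is needed precisely so that the isolating formulae $\theta_1,\ldots,\theta_m$ can be produced uniformly in the stage number. One minor remark: invoking effective quantifier-elimination to compute the reals $r_i=\mu\llbracket\theta_i(h,\vec h)\rrbracket$ is harmless but unnecessary, since these are values of atomic $L^R$-formulae and are therefore computable directly from the hypothesis that $(\mbor)^\#$ is a computable presentation.
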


We should point out that, in the context of the previous theorem, $\mbor$ does indeed have a computable presentation, whence the theorem is not vacuous.  Indeed, if $\cal M$ is $\omega$-categorical, then it has a decidable presentation, whence the induced presentation on $\mbor$ is computable by Corollary \ref{maincor} above.

\section{Effective quantifier-elimination for randomizations}\label{appendix1}

In this section, we prove that every theory of randomizations (in a computable language) admits effective quantifier elimination.  Since effective quantifier elimination seems not to have been considered before in continuous logic, we begin by defining the notion precisely and establishing some preliminary lemmas.

\begin{defn}
Suppose that $T$ is a continuous theory in a computable language.  We say that $T$ has \textbf{effective quantifier-elimination} if there is an algorithm such that, upon input a restricted formula $\varphi(\vec x)$ and rational $\epsilon>0$, returns a quantifier-free restricted formula $\psi(\vec x)$ such that $\|\varphi-\psi\|_T<\epsilon$.
\end{defn}

Here, given arbitrary formulae $\varphi$ and $\psi$ in the language for $T$, we define $$\|\varphi-\psi\|_T:=\sup\{|\varphi(\vec a)^{\cal N}-\psi(\vec a)^{\cal N}| \ : \ \cal N\models T, \ \vec a\in \cal N\}.$$

Our goal in this section is to prove the following theorem:

\begin{thm}\label{randomQE}
Suppose that $L$ is a classical computable language.  Then for any $L$-structure $\cal M$, the complete theory of $\mbor$ has effective quantifier-elimination via an algorithm that depends only on $L$ (and not on $\cal M$). 
\end{thm}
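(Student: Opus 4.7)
The plan is to induct on the syntactic complexity of the restricted $L^R$-formula $\varphi(\vec x)$ and, at each stage, produce the quantifier-free approximant effectively. Atomic formulae are already quantifier-free, and the restricted connectives (compositions of $0$, $1$, $x/2$, and $x\dminus y$) propagate $\epsilon$-approximations by their Lipschitz constants, so the real work is in eliminating a single continuous quantifier $\sup_z$ (equivalently $\inf_z$), where $z$ ranges over either the sort $\cal K$ or the sort $\cal B$.

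For $z$ a variable of sort $\cal B$, I would invoke effective quantifier-elimination for the theory of atomless probability algebras. The classical QE proof in \cite[Proposition 16.6]{BBHU} proceeds by a finite manipulation of the measures of the atoms of the Boolean algebra generated by the parameters; a careful accounting shows the resulting quantifier-free approximant is a continuous function of these atomic measures given uniformly by restricted connectives with computable rational coefficients, yielding effectiveness. The parameters here are the events $\llbracket\theta(\vec f)\rrbracket$ for $L$-formulae $\theta$ appearing in the body, so the output is a genuine quantifier-free $L^R$-formula.

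For $z = f$ a variable of sort $\cal K$, I would combine Fact \ref{fullness} with a normal-form reduction. Collect all $L$-subformulae $\varphi_1(f,\vec g_1),\ldots,\varphi_M(f,\vec g_M)$ occurring inside the body that involve $f$, and for each $\sigma\in\{0,1\}^M$ let $\theta_\sigma(x,\vec y)$ be the $L$-formula asserting $\varphi_i(x,\vec g_i)\leftrightarrow\sigma(i)$; the $\{\theta_\sigma\}$ form a syntactic $L$-partition in $x$. Every Boolean combination of the $\llbracket\varphi_i\rrbracket$'s and $\cal B$-parameters then rewrites as a sum of terms $\mu(A_i\wedge\llbracket\theta_\sigma(f,\vec g)\rrbracket)$, where $A_i$ runs over the Boolean atoms in the remaining $\cal B$-parameters and $\vec g$ collects all $\vec g_i$. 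The body of $\sup_f$ thus becomes a continuous function of the real tuple $m_{i\sigma}:=\mu(A_i\wedge\llbracket\theta_\sigma(f,\vec g)\rrbracket)$, and $\sup_f$ equals the supremum of that function over the polytope cut out by the constraints $m_{i\sigma}\geq 0$, $\sum_\sigma m_{i\sigma}=\mu(A_i)$, and $m_{i\sigma}\leq\mu(A_i\wedge\llbracket\exists x\,\theta_\sigma(x,\vec g)\rrbracket)$. Realizability of every such tuple by some $f\in\cal K$ is what Fact \ref{fullness} provides (after selecting disjoint sub-events of the required measure and applying fullness on each). All bounding quantities are themselves atomic $L^R$-formulae in the remaining parameters, so the supremum is a computable continuous function of those parameters, approximable to within $\epsilon$ by a restricted-connective combination via standard effective analysis.

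The principal obstacle is the $\sup_f$ step, specifically the rigorous justification that the realizability polytope is exactly cut out by the $\exists x$-absorbed atomic formulae and that the parametric supremum over this polytope admits uniform restricted-connective approximation in the parameters. This is where the "definable family of definable subsets" formalism announced in the introduction must be developed: we need to treat the partition $\{\llbracket\theta_\sigma(f,\vec g)\rrbracket\}_\sigma$ as a definable family of definable partitions of $[0,1)$ depending continuously on $\vec g$, and verify that taking the supremum over admissible measure vectors preserves effective definability. Uniformity in $\cal M$ is then automatic, since the entire procedure is purely syntactic in $L$ and the only semantic ingredients invoked — Fact \ref{fullness} and atomless-algebra QE — hold uniformly for every randomization.
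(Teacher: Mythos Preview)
Your overall architecture matches the paper's: reduce to a single quantifier via Lemma~\ref{QEtest}, handle an $\inf$ over sort $\mathcal{B}$ by effective QE for atomless probability algebras, and handle an $\inf$ over sort $\mathcal{K}$ by passing to the syntactic partition $\{\theta_\sigma\}$ and replacing $\llbracket\theta_\sigma(f,\vec g)\rrbracket$ by fresh $\mathcal{B}$-variables. However, your polytope description of the realizable measure vectors is wrong: the constraints $m_{i\sigma}\geq 0$, $\sum_\sigma m_{i\sigma}=\mu(A_i)$, and $m_{i\sigma}\leq \mu(A_i\wedge\llbracket\exists x\,\theta_\sigma\rrbracket)$ are necessary but not sufficient. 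Concretely, with three cells and $\llbracket\exists x\,\theta_1\rrbracket=[0,0.4)$, $\llbracket\exists x\,\theta_2\rrbracket=[0.2,0.8)$, $\llbracket\exists x\,\theta_3\rrbracket=[0.6,1)$, the vector $(m_1,m_2,m_3)=(0.3,0.6,0.1)$ lies in your polytope but is not realizable, since $m_1+m_2=0.9>\mu([0,0.8))=0.8$. The true realizability region is cut out by the Hall-type inequalities $\sum_{\sigma\in S}m_{i\sigma}\leq \mu\bigl(A_i\wedge\bigcup_{\sigma\in S}\llbracket\exists x\,\theta_\sigma\rrbracket\bigr)$ for every subset $S$, which are not the atomic bounds you wrote down. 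Fact~\ref{fullness} alone does not give you the disjoint selection you need.

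The paper sidesteps this by \emph{not} collapsing to measures prematurely. It keeps the replacement variables $B_j$ as events, characterizes the admissible tuples $\vec B$ by Fact~\ref{WitnessPartition} (partition of $\top$ with $B_j\subseteq\llbracket\exists x\,\theta_j\rrbracket$), packages this as the zeroset of a single quantifier-free formula $\Phi(\vec B,\vec Y)$, and proves in Lemma~\ref{ClaimDef} that $\Phi$ defines a definable family of definable sets with a computable modulus independent of $\mathcal{M}$. Proposition~\ref{defprop} then converts $\inf\{\psi_3:\Phi=0\}$ into a genuine $\inf_{\vec B}$ of a restricted quantifier-free formula, after which the $\mathcal{B}$-sort QE finishes. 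Your final paragraph already gestures at exactly this mechanism; the point is that it replaces, rather than justifies, the polytope step.
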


We begin by developing a useful test for effective quantifier-elimination.  

\begin{lem}\label{QEtest}
Suppose that there is an algorithm such that, upon input a restricted quantifier-free formula $\varphi(x,\vec y)$ and $\epsilon>0$, returns a restricted quantifier-free formula $\psi(\vec y)$ such that $$\|\inf_x \varphi(x,\vec y)-\psi(\vec y)\|_T<\epsilon.$$  Then $T$ admits effective quantifier-elimination.
\end{lem}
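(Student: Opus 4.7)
My plan is to prove the lemma by induction on the quantifier complexity of a restricted formula $\varphi(\vec x)$, producing at each stage a quantifier-free restricted $\psi(\vec x)$ within the prescribed error. The algorithm would recurse on the structure of $\varphi$ in the obvious way, with the hypothesis invoked at $\inf$-quantifier steps. The base case (when $\varphi$ is already quantifier-free) returns $\varphi$ itself.

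For the connective case $\varphi = C(\varphi_1,\ldots,\varphi_k)$, where $C$ is a restricted connective, the key point is that $C$, being built by composition from $0$, $1$, $x/2$ and $x \dotminus y$, admits a computable modulus of uniform continuity $\delta_C$ (computable from the syntactic description of $C$), so that if each $\|\varphi_i - \psi_i\|_T < \delta_C(\epsilon)$ then $\|C(\vec \varphi) - C(\vec \psi)\|_T < \epsilon$. We then recursively compute restricted quantifier-free $\psi_i$ with this tolerance and return $C(\psi_1,\ldots,\psi_k)$.

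For the quantifier case $\varphi(\vec y) = \inf_x \varphi'(x,\vec y)$, we first recursively replace $\varphi'$ by a restricted quantifier-free $\psi'(x,\vec y)$ with $\|\varphi' - \psi'\|_T < \epsilon/2$; since $\inf_x$ is $1$-Lipschitz in the sup-norm, this yields $\|\inf_x \varphi' - \inf_x \psi'\|_T < \epsilon/2$. Then we feed $\psi'$ and $\epsilon/2$ into the algorithm supplied by the hypothesis to obtain a restricted quantifier-free $\psi(\vec y)$ with $\|\inf_x \psi' - \psi\|_T < \epsilon/2$, and the triangle inequality finishes the step. The $\sup$-quantifier case reduces to the $\inf$ case via the identity $\sup_x \varphi'(x,\vec y) = 1 \dotminus \inf_x\bigl(1 \dotminus \varphi'(x,\vec y)\bigr)$, which is valid since all formula values lie in $[0,1]$; this is syntactically a restricted connective applied to an $\inf$-formula, so it is already handled by the previous two cases.

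The main obstacle, such as it is, lies in the bookkeeping: at each connective step we must pass down an error tolerance determined by the modulus of continuity $\delta_C$, and we must verify that these moduli are themselves computable uniformly from the syntax of the restricted connectives (which follows because $x/2$ is $\tfrac12$-Lipschitz and $x \dotminus y$ is $1$-Lipschitz in each argument, and compositions of computable Lipschitz data remain computable). Everything else is a straightforward application of the triangle inequality and the hypothesized algorithm.
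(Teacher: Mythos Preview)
Your proof is correct and takes a somewhat different route from the paper's. The paper first invokes the fact (from \cite[Proposition~6.9]{BBHU}) that the conversion of a restricted formula into an equivalent restricted prenex-normal-form formula is effective; it then writes $\varphi(\vec x)=Q_1 y_1\cdots Q_m y_m\,\psi(\vec x,\vec y)$ with $\psi$ quantifier-free and strips the quantifiers one at a time from the inside out, applying the hypothesis at each step with tolerance $\epsilon/m$ and summing the $m$ errors. Your argument instead proceeds by direct structural induction on $\varphi$, handling connectives via their (syntactically computable) moduli of uniform continuity and quantifiers by first recursing on the body and then invoking the hypothesis. Your approach is more self-contained in that it avoids the external appeal to the effective prenex-normal-form lemma, at the cost of the extra bookkeeping about connective moduli that you correctly identify; conversely, the paper's approach trades that bookkeeping for a single citation and then enjoys a cleaner iteration. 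You also make explicit the $\sup$-to-$\inf$ reduction via $1\dotminus\inf_x(1\dotminus\varphi')$, which the paper leaves implicit when it applies the hypothesis to an arbitrary innermost quantifier $Q_m$. Both arguments are standard and equally valid.
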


\begin{proof}
Suppose a restricted formula $\varphi(\vec x)$ and rational $\epsilon>0$ are given.  First note that the procedure for converting a restricted formula into an equivalent restricted formula in prenex normal form is effective (see \cite[Proposition 6.9]{BBHU}), whence we may assume that $\varphi$ is itself in prenex normal form.  Write $$\varphi(\vec x)=Q_1y_1\cdots Q_my_m \psi(\vec x,\vec y)$$ with $\psi(\vec x,\vec y)$ a quantifier-free restricted formula.  By assumption, we may find a quantifier-free restricted formula $\psi_1(\vec x,y_1,\ldots,y_{m-1})$ such that $$\|Q_my_m\psi(\vec x,y_1,\ldots,y_{m-1},y_m)-\psi_1(\vec x,y_1,\ldots,y_{m-1})\|_T<\frac{\epsilon}{m}.$$  Similarly, one can find a quantifier-free restricted formula $\psi_2(\vec x,y_1,\ldots,y_{m-2})$ such that $$\|Q_{m-1}y_{m-1}\psi_1(\vec x,y_1,\ldots,y_{m-2},y_{m-1})-\psi_2(\vec x,y_1,\ldots,y_{m-2})\|_T<\frac{\epsilon}{m}.$$  Continuing in this way, we obtain a sequence of quantifier-free restricted formula $\psi_1(\vec x,y_1,\ldots,y_{m-1}),\ldots,\psi_m(\vec x)$.  It follows that $\|\varphi(\vec x)-\psi_m(\vec x)\|_T<\epsilon$.    
\end{proof}

Our next goal is to establish the fact that atomless probability algebras admit effective quantifier elimination.  Before doing so, we need a few preparatory results.  The first one is well-known and follows from the proof of the corresponding classical fact:

\begin{lem}\label{compmod}
There is an algorithm such that, upon input $n\geq 1$ and a code for a computable continuous function $u:[0,1]^n\to [0,1]$, returns a code for a modulus of uniform continuity for $u$.
\end{lem}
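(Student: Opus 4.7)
The plan is to effectivize the classical proof of the Heine--Cantor theorem. Given a code for $u \colon [0,1]^n \to [0,1]$ and a rational $\epsilon > 0$, I will describe an algorithm producing a rational $\delta > 0$ such that $\|x - y\|_\infty < \delta$ implies $|u(x) - u(y)| < \epsilon$; uniformity in the code and in $\epsilon$ then yields the desired algorithm for the modulus.

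First I would record the following standard feature of computable continuous functions on a compact box: from the code for $u$ and a rational closed box $B \subseteq [0,1]^n$, one can effectively compute, to arbitrary precision, an upper bound on the oscillation $\operatorname{osc}(u, B) := \sup_B u - \inf_B u$.  In any of the standard frameworks (Type-2 machines, Pour-El--Richards computable analysis, effective Cauchy representations) this amounts to running the machine for $u$ on a Cauchy name compatible with every point of $B$, which yields an enclosure for $u(B)$ that refines to the true range as the precision of the input name increases.

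With this in hand, the main algorithm is a dovetailed search. Given $\epsilon$, in parallel over $k = 1, 2, \ldots$, partition $[0,1]^n$ into the $k^n$ closed subcubes $C_1, \ldots, C_{k^n}$ of side length $1/k$, and refine the upper bounds on $\operatorname{osc}(u, C_i)$. As soon as a $k$ is found for which every current upper bound is strictly less than $\epsilon/2$, return $\delta := 1/(2k)$. Termination follows from the classical Heine--Cantor theorem: $u$ is uniformly continuous, so for sufficiently large $k$ the actual oscillations drop below $\epsilon/4$, and our upper semi-computable approximations will eventually confirm this. Correctness is immediate: any pair $x, y$ with $\|x - y\|_\infty < 1/(2k)$ lies in the union of two subcubes $C_i, C_j$ sharing a common point $p$, and the triangle inequality gives $|u(x) - u(y)| \leq \operatorname{osc}(u, C_i) + \operatorname{osc}(u, C_j) < \epsilon$.

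The main obstacle is the first step: making precise the effective upper semi-computability of oscillations from a code for $u$.  This is routine in any of the standard effective-analysis frameworks, but is the place where the genuine \emph{computability} of the function $u$ (as opposed to mere computability of its values at rational points) is really invoked.
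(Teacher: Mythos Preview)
Your proposal is correct and is precisely in the spirit of what the paper intends: the paper does not actually prove this lemma, stating only that it ``is well-known and follows from the proof of the corresponding classical fact.'' Your effectivization of the Heine--Cantor argument via upper-semicomputable oscillations on a dyadic grid is a standard way to carry this out, and your correctness and termination arguments are fine.

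One small remark: depending on which notion of ``code for a computable continuous function'' is in force, the lemma may be even more immediate than your argument suggests. In several of the standard frameworks (e.g.\ Pour-El--Richards, or the representation by rational polynomial approximants), a modulus of uniform continuity is essentially part of the data of the code, and can be read off directly. Your argument has the virtue of working from the weaker assumption that one can merely evaluate $u$ on arbitrary computable inputs (Type-2 style), where the extraction of a modulus genuinely requires a compactness argument of the kind you give. You correctly flag this as the only nontrivial step.
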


\begin{lem}\label{compresetricted}
There is an algorithm such that, upon input a computable continuous function $u:[0,1]^n\to [0,1]$ and rational $\epsilon>0$, returns a restricted function $v:[0,1]^n\to [0,1]$ such that $\|u-v\|_\infty<\epsilon$.
\end{lem}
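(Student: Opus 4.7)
The plan is to perform an effective enumerate-and-test search: list all restricted terms $v_1, v_2, \ldots$ in $n$ variables (which is immediate, as the generators $0, 1, x/2, x \dminus y$ are finite in number and the set of terms built from them is computably enumerable), and return the first $v_i$ found to satisfy $\|u - v_i\|_\infty < \epsilon$.

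The feasibility of the test rests on the standard fact from computable analysis that, given a computable continuous function $w : [0,1]^n \to [0,1]$ together with a computable modulus of uniform continuity for $w$, the real number $\|w\|_\infty$ is uniformly computable. Indeed, for any rational $\eta > 0$ one uses the modulus to select a rational mesh size $\delta$ on which $w$ oscillates by less than $\eta$ on each cell, evaluates $w$ to within $\eta$ at each grid point, and takes the maximum, obtaining an approximation to $\|w\|_\infty$ within $2\eta$. We apply this with $w := u - v_i$: Lemma \ref{compmod} supplies a modulus of uniform continuity for $u$, and a modulus for $v_i$ can be read off from its syntactic form by a direct induction on restricted terms, so together these give a modulus for $w$. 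Thus we may compute a rational $\alpha_i$ with $|\alpha_i - \|u - v_i\|_\infty| < \epsilon/4$, uniformly in the inputs. The algorithm then returns the first $v_i$ with $\alpha_i < 3\epsilon/4$, which will satisfy $\|u - v_i\|_\infty < \epsilon$.

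The search terminates by appeal to the (classical, non-effective) density of restricted functions in $C([0,1]^n, [0,1])$ with respect to the uniform norm, which is the content of the treatment of connectives in \cite[Chapter 6]{BBHU}: by density some $v_i$ satisfies $\|u - v_i\|_\infty < \epsilon/2$, whence the computed $\alpha_i$ falls below $3\epsilon/4$ and the algorithm halts. The main obstacle is not conceptual but a matter of bookkeeping: one must pin down what a ``code for a computable continuous function $u$'' provides as input (typically, a procedure returning rational approximations to $u$ at rational points within any prescribed tolerance) and verify that, together with Lemma \ref{compmod}, this yields the uniformity needed to compute $\|u - v_i\|_\infty$ from the pair $(u, i)$. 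Under the standard convention this goes through without further difficulty, and the overall algorithm is a single index that works uniformly in $u$ and $\epsilon$.
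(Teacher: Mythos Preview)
Your proposal is correct and follows essentially the same approach as the paper: both enumerate restricted terms and, for each candidate $v$, test closeness to $u$ by using moduli of uniform continuity to reduce to a comparison at finitely many rational grid points. The only cosmetic difference is that the paper writes out the grid test explicitly (choosing mesh $2^{-n}$ below both moduli at $\epsilon/6$ and comparing rational intervals containing $u$- and $v$-values at grid points), whereas you package the same computation as ``compute $\|u-v_i\|_\infty$ to accuracy $\epsilon/4$''; termination in both cases is by the non-effective density of restricted connectives established in \cite[Section 6]{BBHU}.
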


\begin{proof}
Given a restricted function $v$, find $n\in \mathbb N$ such that $$2^{-n}<\min(\Delta_u(\epsilon/6),\Delta_v(\epsilon/6)).$$  For each $k=1,\ldots,2^n$, find open intervals $I_k,J_k\subseteq [0,1]$ with rational endpoints of length at most $\epsilon/6$ such that $u(k/2^n)\in I_k$ and $v(k/2^n)\in J_k$.  The algorithm then checks if $$\max\{|s-s'| \ : \ s\in I_k,s'\in J_k\}<\epsilon/2; \quad (\dagger)$$ if the answer is yes, then the algorithm returns $\nu$ as the desired restricted function.  To see that $\nu$ is as claimed, note that, for any $x\in [k/2^n,(k+1)/2^n)$, we have
$$|u(x)-v(x)|\leq |u(x)-u(k/2^n)|+|u(k/2^n)-v(k/2^n)|+|v(k/2^n)-v(x)|\leq 5\epsilon/6<\epsilon.$$  Also note that, if $\nu$ is such that $\|u-v\|_\infty<\epsilon/6$, then indeed $(\dagger)$ above holds, so the algorithm always terminates.
\end{proof}

For a continuous function $u:[0,1]^n\to [0,1]$, define $\min(u):[0,1]^n\to [0,1]$ by $\min(u)(r_1,\ldots,r_n):=\min\{u(s_1,\ldots,s_n) \ : \ s_i\leq r_i \ i=1,\ldots,n\}$.  Then $\min(u)$ is itself a continuous function.  Moreover, we have the following result, whose straightforward proof we leave to the reader:

\begin{lem}\label{compmin}
If $u$ is computable, then so is $\min(u)$.  Moreover, there is an algorithm such that, upon input the code for a computable continuous function $u:[0,1]^n\to [0,1]$, returns a code for $\min(u)$.
\end{lem}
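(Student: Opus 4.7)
The plan is straightforward: since $\min(u)(\vec r)$ is an infimum of $u$ over a compact box whose corner varies continuously with $\vec r$, we expect $\min(u)$ to inherit a modulus of continuity from $u$, and we expect to approximate $\min(u)(\vec r)$ by the minimum of $u$ over a finite grid.

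First I would verify that $\min(u)$ is uniformly continuous with essentially the same modulus as $u$. Given $\vec r, \vec r' \in [0,1]^n$, pick $\vec s \le \vec r$ achieving $\min(u)(\vec r)$, which exists by compactness, and set $s_i' := \min(s_i, r_i')$. Then $\vec s' \le \vec r'$ and $|s_i - s_i'| \le |r_i - r_i'|$, so uniform continuity of $u$ yields $\min(u)(\vec r') \le u(\vec s') \le u(\vec s) + \omega = \min(u)(\vec r) + \omega$, where $\omega$ is the oscillation of $u$ at scale $|\vec r - \vec r'|_\infty$. Symmetry gives the reverse bound, so $\min(u)$ has the same modulus of continuity as $u$, and in particular is itself a continuous function $[0,1]^n \to [0,1]$.

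Second, given a code for $u$, a computable $\vec r \in [0,1]^n$, and rational $\epsilon > 0$, I would approximate $\min(u)(\vec r)$ as follows. Invoke Lemma \ref{compmod} to extract a rational $\delta > 0$ with the property that $|\vec x - \vec y|_\infty < \delta$ implies $|u(\vec x) - u(\vec y)| < \epsilon/3$. Compute a rational $\vec q$ with $|\vec q - \vec r|_\infty < \delta$, and form a finite rational grid $G \subseteq [0, q_1] \times \cdots \times [0, q_n]$ of spacing at most $\delta$, so that every point of this box lies within $\delta$ of some point of $G$. For each $\vec p \in G$, use the code for $u$ to compute a rational $v_{\vec p}$ with $|u(\vec p) - v_{\vec p}| < \epsilon/3$, and return $\min_{\vec p \in G} v_{\vec p}$. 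The first step bounds the error incurred by replacing $\vec r$ with $\vec q$ by $\epsilon/3$; replacing the box by the grid contributes at most $\epsilon/3$ (by evaluating $u$ at a grid point near a minimizer); and the per-point evaluation error is $\epsilon/3$, for a total of less than $\epsilon$.

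There is no deep obstacle here; the only delicate aspect is the bookkeeping, i.e.\ verifying that the three sources of error combine correctly and that the chosen grid is genuinely fine enough on the perturbed box. Uniformity in $u$ is automatic: every step of the algorithm, including the call to Lemma \ref{compmod} and the oracle queries to the code for $u$, is uniform in that code, so the whole procedure transforms a code for $u$ into a code for $\min(u)$, which yields the moreover clause.
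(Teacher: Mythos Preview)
Your proposal is correct. The paper omits the proof entirely (``whose straightforward proof we leave to the reader''), and your argument is exactly the natural one any reader would supply: extract a modulus of continuity for $u$ via Lemma~\ref{compmod}, replace the infimum over the box $\prod_i[0,r_i]$ by a minimum over a finite rational grid, and bound the three error contributions (perturbing $\vec r$ to a rational, passing from the box to the grid, and approximating $u$ at grid points) by $\epsilon/3$ each. One cosmetic remark: when you choose the rational approximation $\vec q$ to $\vec r$, it does no harm to clip each $q_i$ into $[0,1]$ so that the box $\prod_i[0,q_i]$ is genuinely a subset of $[0,1]^n$; this avoids any ambiguity about evaluating $u$ outside its domain.
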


\begin{thm}
The theory of atomless probability algebras admits effective quantifier-elimination.
\end{thm}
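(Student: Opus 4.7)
By Lemma \ref{QEtest}, it suffices, given any restricted quantifier-free formula $\varphi(x, y_1, \ldots, y_n)$ and rational $\epsilon > 0$, to produce a restricted quantifier-free formula $\psi(\vec y)$ with $\|\inf_x \varphi - \psi\|_T < \epsilon$, where $T$ is the theory of atomless probability algebras. My plan is to recast the infimum as an infimum of a continuous function on a finite-dimensional cube parametrized by the measures of the atoms of the subalgebra generated by $\vec y$, and then approximate that function by a restricted one.

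I would first decompose $\varphi$ along the atoms over $\vec y$. Setting $m = 2^n$, let $A_1(\vec y), \ldots, A_m(\vec y)$ enumerate the Boolean terms $y_1^{\delta_1}\cap\cdots\cap y_n^{\delta_n}$ for $\delta\in\{0,1\}^n$ (with the convention $y^1 = y$, $y^0 = \lnot y$), and write $a_i := \mu(A_i)$ and $s_i(x) := \mu(x \cap A_i)$. Each atomic subformula $\mu(t(x, \vec y))$ of $\varphi$ is then a linear expression in $\vec s(x)$ and $\vec a$ whose coefficients are effectively computable from $t$, so there is a computable continuous function $F\colon [0,1]^{2m} \to [0,1]$ produced algorithmically from $\varphi$ such that $\varphi(x, \vec y)^{\mathcal B} = F(\vec s(x), \vec a)$ in every atomless probability algebra $\mathcal B$. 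The crucial use of atomlessness is that as $x$ varies in $\mathcal B$, the tuple $\vec s(x)$ sweeps out the full box $\prod_i [0, a_i]$; substituting $s_i = a_i t_i$ with $t_i \in [0,1]$ gives
\[
\bigl(\inf_x \varphi(x, \vec y)\bigr)^{\mathcal B} \;=\; \inf_{\vec t \in [0,1]^m} F(a_1 t_1, \ldots, a_m t_m, a_1, \ldots, a_m) \;=:\; g(\vec a),
\]
so the infimum in any atomless probability algebra is governed by a single continuous function $g\colon [0,1]^m \to [0,1]$ of the atom-measures.

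The remaining step is to convert $g$ into a restricted formula in $\vec y$, uniformly in $\varphi$. Using Lemma \ref{compmod} to extract a modulus of uniform continuity for $F$, one computes $g(\vec a)$ to within any prescribed $\delta > 0$ by evaluating $F$ on a sufficiently fine grid of $\vec t$-values and taking the minimum; the resulting modulus for $g$ is inherited from that of $F$, so $g$ is a computable continuous function in the sense of Lemma \ref{compresetricted}. Applying Lemma \ref{compresetricted} with tolerance $\epsilon$ yields a restricted function $\tilde g$ with $\|g - \tilde g\|_\infty < \epsilon$, and setting $\psi(\vec y) := \tilde g(\mu(A_1(\vec y)), \ldots, \mu(A_m(\vec y)))$ delivers a restricted quantifier-free formula in $\vec y$ satisfying $\|\inf_x \varphi - \psi\|_T \leq \|g - \tilde g\|_\infty < \epsilon$. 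The main obstacle I anticipate is bookkeeping: verifying that each of the passages $\varphi \mapsto F$, $F \mapsto g$, and $g \mapsto \tilde g$ is carried out by an algorithm that depends only on $\varphi$ and $\epsilon$ (and not on any ambient model), so that the resulting procedure is a genuine effective quantifier-elimination algorithm. No deep new idea should be required beyond combining the standard atom decomposition with the three preparatory lemmas.
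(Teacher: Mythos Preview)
Your proposal is correct and follows essentially the same route as the paper: both arguments use the test of Lemma~\ref{QEtest}, decompose $\varphi$ through the atoms $\vec y^k$ to express it as a restricted (hence computable) function of the measures $\mu(x\cap \vec y^k)$ and $\mu(\vec y^k)$, invoke atomlessness to identify $\inf_x\varphi$ with a coordinatewise minimum over a box, and then apply Lemma~\ref{compresetricted} to approximate the resulting computable function by a restricted one. The only cosmetic differences are that the paper packages the minimization step as Lemma~\ref{compmin} (which you effectively reprove by the grid argument) and that you are a bit more careful to carry the parameters $a_i=\mu(A_i)$ as separate inputs to $F$.
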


\begin{proof}
Let $T$ denote the theory of atomless probability algebras.  We apply the test described in Lemma \ref{QEtest}.  Suppose that $\varphi(x,\vec y)$ and $\epsilon>0$ are given.  Write $\vec y=(y_1,\ldots,y_n)$.  Without loss of generality, we may write 
$$\varphi(x,\vec y)=u((\mu(x\cap \vec y^k)_{k\in 2^n})),$$ where $u:[0,1]^{2^n}\to [0,1]$ is a restricted function and $\vec y^k:=y_1^{k(1)}\cap \cdots \cap y_n^{k(n)}$.  We then have that $\inf_x\varphi(x,\vec y)$ is equivalent to $\min(u)((\mu(\vec y^k)_{k\in 2^n}))$.  By Lemma \ref{compmin}, we can compute a code for $\min(u)$ from a code for $u$; by Lemma \ref{compresetricted}, we can then compute a restricted function $v$ such that $\|\min(u)-v\|_\infty<\epsilon$.  It follows that $\|\inf_x\varphi(x,\vec y)-v((\mu(\vec y^k))_{k\in 2^n})\|_T<\epsilon$.
\end{proof}

The other ingredient needed in the proof of Theorem \ref{randomQE} is an appropriate notion of a family of definable sets in continuous logic.  To the best of our knowledge, such a definition has not appeared in the literature thus far.

\begin{defn}
Suppose that $\cal N$ is a metric structure.  A formula $\varphi(\vec x,\vec y)$ defines a \textbf{definable family of definable subsets of $\cal N$} if there is a function $\Delta:(0,1)\to (0,1)$ such that, for all $\vec a,\vec b\in \cal N$ and $\epsilon>0$, if $\varphi(\vec a,\vec b)<\Delta(\epsilon)$, then there is $\vec a'\in \cal N$ such that $d(\vec a,\vec a')<\epsilon$ and $\varphi(\vec a',\vec b)=0$.
\end{defn}

In what follows, given a formula $\varphi(\vec x,\vec y)$ and $\vec b\in \cal N$, we denote the zeroset of $\varphi(\vec x,\vec b)$ by $Z(\varphi(\vec x,\vec b)):=\{\vec a\in \cal N \ : \ \varphi(\vec a,\vec b)\}=0$. 

\begin{prop}\label{defprop}
Suppose that $\varphi(\vec x,\vec y)$ defines a definable family of definable subsets of $\cal N$.  Then for any formula $\psi(\vec x,\vec y,\vec z)$, the predicate 
$$P(\vec b,\vec c):=\inf\{\psi(\vec a,\vec b,\vec c) \ : \ \varphi(\vec a,\vec b)=0\}$$ is a definable predicate in $\cal N$.  Moreover:
\begin{enumerate}
    \item If $\varphi$ and $\psi$ are quantifier-free, then $P$ can be uniformly approximated (in $\cal N$) by formulae of the form $\inf_{\vec w}\chi(\vec x,\vec y,\vec z)$ with $\chi$ quantifier-free and $|\vec w|=2|\vec x|$.
    \item If the modulus $\Delta$ witnessing that $\varphi$ defines a definable family of definable subsets of $\cal N$ is computable, then there is an algorithm such that, upon input a restricted quantifier-free formula $\psi(\vec x,\vec y,\vec z)$ and rational $\epsilon>0$, returns a restricted quantifier-free formula $\chi(\vec x,\vec y,\vec z)$ such that $$\|P(\vec b,\vec c)-\inf_{\vec x}\chi(\vec x,\vec b,\vec c)^{\cal N}\|<\epsilon$$ for all $\vec b,\vec c\in \cal N$.  Moreover, a code for this algorithm can be computed from a code for $\Delta$.
\end{enumerate}
\end{prop}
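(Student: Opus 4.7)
The definability statement in the displayed claim follows from (1), since any uniform limit of definable predicates is definable, so my plan is to produce the quantifier-free witness $\chi$ of part (1) and then verify that the construction is effective in the sense of (2) when the inputs are. The core idea is to relax the hard constraint $\varphi(\vec a,\vec b)=0$ into a soft penalty, exploiting the fact that $\varphi(\vec a,\vec b)<\Delta(\eta)$ already forces a true zero of $\varphi(\cdot,\vec b)$ within distance $\eta$ of $\vec a$; the extra factor of two on $|\vec w|$ comes from bookkeeping both the candidate approximate zero $\vec x$ and the point $\vec x'$ at which $\psi$ is evaluated.

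Given $\epsilon>0$, let $\omega$ be a modulus of uniform continuity for $\psi$ in its first argument, pick $\eta>0$ with $\omega(2\eta)<\epsilon$, and set $\delta:=\Delta(\eta)$. Choose continuous connectives $f,g\colon[0,1]\to[0,1]$ with $f(0)=g(0)=0$, $f(t)=1$ for $t\ge\delta$, and $g(t)=1$ for $t\ge\eta$. Define
\[
\chi(\vec x,\vec x',\vec y,\vec z):=\max\bigl(\psi(\vec x',\vec y,\vec z),\ g(d(\vec x,\vec x')),\ f(\varphi(\vec x,\vec y))\bigr),
\]
which is quantifier-free with $\vec w:=(\vec x,\vec x')$ of length $2|\vec x|$. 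For the inequality $\inf_{\vec w}\chi(\vec w,\vec b,\vec c)\le P(\vec b,\vec c)$, pick $\vec a\in Z(\varphi(\cdot,\vec b))$ with $\psi(\vec a,\vec b,\vec c)$ arbitrarily close to $P(\vec b,\vec c)$ and take $\vec x=\vec x'=\vec a$; both bump terms vanish. For $\inf_{\vec w}\chi\ge P-\epsilon$, if $\chi(\vec a,\vec a',\vec b,\vec c)=q<1$, then $d(\vec a,\vec a')<\eta$ and $\varphi(\vec a,\vec b)<\delta$, so the definable-family assumption yields $\vec a''$ with $d(\vec a,\vec a'')<\eta$ and $\varphi(\vec a'',\vec b)=0$; then $d(\vec a',\vec a'')<2\eta$, so $|\psi(\vec a',\vec b,\vec c)-\psi(\vec a'',\vec b,\vec c)|<\epsilon$, whence $P(\vec b,\vec c)\le\psi(\vec a'',\vec b,\vec c)\le q+\epsilon$.

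For part (2), the identical construction goes through effectively: when $\psi$ is a restricted quantifier-free formula, Lemma \ref{compmod} computes a modulus $\omega$ from its code, hence $\eta$ and $\delta=\Delta(\eta)$ can be computed from $\epsilon$ using the given algorithm for $\Delta$; the continuous bumps $f$ and $g$ are replaced by uniformly close restricted functions via Lemma \ref{compresetricted}, and the composed restricted quantifier-free formula $\chi$ is returned. The main obstacle, as often in continuous logic, is pure modulus bookkeeping: matching the tolerance $\omega$ of $\psi$ to the amount of wiggle room $\Delta$ provides, and then controlling the additional errors introduced by approximating $f$ and $g$ so that all accumulated errors fit inside the target $\epsilon$. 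None of the individual steps is deep, but the argument needs disciplined tracking of constants to come out cleanly.
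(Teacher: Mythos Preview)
Your argument is correct but follows a different construction from the paper's.  The paper mirrors \cite[Theorem 9.17 and Proposition 9.19]{BBHU}: it first produces gauges $\alpha$ and $\beta$ with $|\psi(\vec a,\cdot)-\psi(\vec a',\cdot)|\le\alpha(d(\vec a,\vec a'))$ and $d(\vec a,Z(\varphi(\cdot,\vec b)))\le\beta(\varphi(\vec a,\vec b))$, sets $\Phi(\vec x,\vec y)=\inf_{\vec x'}\min(\beta(\varphi(\vec x',\vec y))+d(\vec x,\vec x'),1)$ so that $\Phi(\vec a,\vec b)=d(\vec a,Z(\varphi(\cdot,\vec b)))$, and then shows the \emph{exact} identity $P(\vec b,\vec c)=\inf_{\vec x}(\psi(\vec x,\vec b,\vec c)+\alpha(\Phi(\vec x,\vec b)))$.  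The $\epsilon$-approximation by restricted formulae then comes entirely from approximating the connectives $\alpha$ and $\beta$ via Lemma~\ref{compresetricted}.  By contrast, your construction never writes $P$ exactly: you build the $\epsilon$-approximant directly with max-type bump penalties $f,g$ calibrated to $\epsilon$.  Your route is arguably more elementary (no need to manufacture the gauge $\beta$ or to verify that $\Phi$ realizes the distance to the zero set), while the paper's route has the advantage of an exact closed-form predicate and a clean reference to the existing BBHU machinery.  One small caution: your modulus convention ``$\omega(2\eta)<\epsilon$'' is the forward (gauge) convention, whereas Lemma~\ref{compmod} and the surrounding text use the BBHU convention $d<\Delta(\epsilon)\Rightarrow|\cdot|<\epsilon$; when you write this up, just pick $\eta$ with $2\eta<\Delta_\psi(\epsilon)$ instead.
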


\begin{proof}
We follow the proofs of \cite[Theorem 9.17 and Proposition 9.19]{BBHU}, just noting that they hold in this more general setting and that they hold effectively.  Take a continuous, non-decreasing function $\alpha:[0,1]\to [0,1]$ with $\alpha(0)=0$ such that $|\psi(\vec a,\vec b)-\psi(\vec a',\vec b)|\leq \alpha(d(\vec a,\vec a'))$ for all $\vec a,\vec a',\vec b\in \cal N$.  Note that if $\psi$ is restricted, then $\alpha$ is computable and a code for $\alpha$ can be computed from a code for the modulus of uniform continuity of $\psi$ (see \cite[Section 2]{goldhart}), which in turn can be computed from a code for $\psi$ by Lemma \ref{compmod} above.  Since $\varphi$ defines a definable family of definable subsets of $\cal N$, there is a continuous, non-decreasing function $\beta:[0,1]\to [0,1]$ such that $\beta(0)=0$ and such that, for all $\vec a,\vec b\in \cal N$, one has $d(\vec a,Z(\varphi(\vec x, \vec b)))\leq \beta(\varphi(\vec a,\vec b))$.  As in the previous sentence, if if $\Delta$ is computable, then a code for $\beta$ can be computed from a code for $\Delta$.  

Consider the formula $\Phi(\vec x,\vec y)=\inf_{\vec x'}\min(\beta(\varphi(\vec x',\vec y))+d(\vec x,\vec x'),1)$.  Then for any $\vec b\in \cal N$, one has $\Phi(\vec a,\vec b)=d(\vec a,Z(\varphi(\vec x,\vec b)))$.   Finally, set $$\zeta(\vec y,\vec z)=\inf_{\vec x}\left(\psi(\vec x,\vec y,\vec z)+\alpha(\Phi(\vec x,\vec y))\right).$$  One checks as in \cite[Theorem 9.17]{BBHU} that $P(\vec b,\vec c)=\zeta(\vec b,\vec c)^\cal N$ for all $\vec b,\vec c\in \cal N$.  

Setting $\Lambda(\vec x,\vec x',\vec y)$ to be the quantifier-free portion of $\Phi$, we have that $\Phi(\vec x,\vec y)$ is equivalent to the formula
$$\inf_{\vec x}\inf_{\vec x'}(\psi(\vec x,\vec y,\vec z)+\alpha(\Lambda(\vec x,\vec x',\vec y)).$$

As a result, if $\varphi$ and $\psi$ are quantifier-free, then $\zeta$ is an existential formulae with $2|\vec x|$ many existential quantifiers.  If $\beta$ is computable and $\varphi$ is quantifier-free, then by Lemma \ref{compresetricted}, one can effectively approximate $\Lambda$ by restricted quantifier-free formulae. Another application of Lemma \ref{compresetricted} shows that if $\psi$ is also quantifier-free and $\alpha$ is computable, then $\zeta$ can be effectively approximated by restricted quantifier-free formulae, uniformly in codes for $\alpha$ and $\psi$, completing the proof of the proposition.  
\end{proof}

We now describe the particular family of definable sets that concern us in our proof of effective quantifier-elimination for randomizations.  The fact motivating the family we describe below is the following:

\begin{fact}\label{WitnessPartition}
    Let $\theta_i(x,\vec{y})$, $i=1,...,n$ be a sequence of $L$-formulas such that 
    $$\models(\theta_1\vee...\vee\theta_n)\text{ and }\models\neg(\theta_i\wedge\theta_j)\text{ for }1\leq i<j\leq n.$$
    Consider a Borel randomization $\mbor$ of an $L$-structure $\cal M$.  Fix $B_1,...,B_n\in\mathcal{B}$ and random variables $\vec{f}\in \mathcal{K}$. Then the following are equivalent:
    \begin{itemize}
        \item[i)] There exists a random variable $g\in \cal K$ such that, for each $i=1,\ldots n$, we have
        $$B_i=\llbracket\theta_i(g,\vec{f})\rrbracket.$$
        \item[ii)] The events $B_1,\ldots,B_n$ form a partition of $\top$ and, for each $i=1,\ldots,n$, we have
        $$ B_i\subseteq\llbracket\exists x\theta_i(x,\vec{f})\rrbracket.$$
    \end{itemize}
\end{fact}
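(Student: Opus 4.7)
The plan is to prove the nontrivial direction (ii)$\Rightarrow$(i) by a Borel selection argument; direction (i)$\Rightarrow$(ii) should be essentially immediate, since if $g\in\mathcal K$ witnesses (i), then the hypothesis that $\theta_1,\ldots,\theta_n$ is a definable partition forces $\llbracket\theta_1(g,\vec f)\rrbracket,\ldots,\llbracket\theta_n(g,\vec f)\rrbracket$ to partition $\top$, and the inclusion $B_i=\llbracket\theta_i(g,\vec f)\rrbracket\subseteq\llbracket\exists x\,\theta_i(x,\vec f)\rrbracket$ is obvious.

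For the harder direction, I would first adjust representatives on a null set so that $B_1,\ldots,B_n$ are actually pairwise disjoint Borel subsets of $[0,1)$ whose union is all of $[0,1)$. Next I would fix an enumeration $(a_k)_{k\in\mathbb N}$ of $\mathcal M$ and define $g:[0,1)\to\mathcal M$ by the rule: for $\omega\in B_i$, set $g(\omega):=a_k$, where $k$ is least with $\mathcal M\models\theta_i(a_k,\vec f(\omega))$. Such a $k$ exists because $B_i\subseteq\llbracket\exists x\,\theta_i(x,\vec f)\rrbracket$, which by the definition of the event means precisely that for every $\omega\in B_i$ there is some $a\in\mathcal M$ with $\mathcal M\models\theta_i(a,\vec f(\omega))$.

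Then I would verify that $g\in\mathcal K$ by writing, for each $k$,
$$g^{-1}(\{a_k\})=\bigcup_{i=1}^{n}\Bigl(B_i\cap\llbracket\theta_i(a_k,\vec f)\rrbracket\setminus\bigcup_{j<k}\llbracket\theta_i(a_j,\vec f)\rrbracket\Bigr),$$
which is Borel as a finite union of finite intersections and set differences of Borel sets. Finally I would check $B_i=\llbracket\theta_i(g,\vec f)\rrbracket$ for each $i$: the inclusion $\subseteq$ is immediate from the construction of $g$; for $\supseteq$, if $\omega\in\llbracket\theta_i(g,\vec f)\rrbracket$ and $\omega\in B_j$, then $\mathcal M\models\theta_i(g(\omega),\vec f(\omega))$ and (by definition of $g$) $\mathcal M\models\theta_j(g(\omega),\vec f(\omega))$, so the mutual exclusivity $\models\neg(\theta_i\wedge\theta_j)$ for $i\neq j$ forces $i=j$, giving $\omega\in B_i$.

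The only delicate point is checking that $g$ is indeed a Borel-measurable random variable, and this is routine once the sets are written out as above; nothing in the construction uses more than a countable wellordering of $\mathcal M$ together with finitely many formulae, so no deeper descriptive set-theoretic selection theorem is required.
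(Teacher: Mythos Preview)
The paper states this result as a \textbf{Fact} without proof, so there is no argument in the paper to compare against; your proof is correct and is exactly the natural construction one would expect.

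One small point worth tightening: the hypothesis $B_i\subseteq\llbracket\exists x\,\theta_i(x,\vec f)\rrbracket$ is an inclusion in the probability algebra $\mathcal B$, not a literal set inclusion. After you choose Borel representatives for the $B_i$ and adjust them to a genuine partition of $[0,1)$, there may still be a null set of $\omega\in B_i$ for which no $a\in\mathcal M$ satisfies $\theta_i(a,\vec f(\omega))$. You can handle this either by noting that $\bigcup_{i=1}^n\llbracket\exists x\,\theta_i(x,\vec f)\rrbracket=[0,1)$ (which follows from $\models\theta_1\vee\cdots\vee\theta_n$) and folding this into your null-set adjustment, or simply by defining $g(\omega)$ arbitrarily on that null set, since the desired equality $B_i=\llbracket\theta_i(g,\vec f)\rrbracket$ is in $\mathcal B$ and is insensitive to null sets. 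Either way the argument goes through.
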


For $\theta_1,\ldots,\theta_n$ as in the previous fact, consider the formula $\Phi(\vec{A},\vec{Y})$, where $\vec A$ is a tuple of variables in sort $\mathcal B$ and $\vec Y$ is a tuple of variables in sort $\mathcal{K}$, defined by $$\max\left\{\displaystyle\sum_{i=1}^n\mu(A_i)\dot-\mu\left(\bigcup_{i=1}^nA_i\right),1\dot-\mu\left(\bigcup_{i=1}^nA_i\right),\left\{\mu(A_i)\dot-\mu(A_i\cap\llbracket\exists x\theta_i(x,\vec{Y})\rrbracket)\right\}_{i=1}^n \right\}.$$
Note that, for a given tuple of random variables $\vec f$ from $\cal K$, the zeroset of $\Phi(\vec A,\vec f)$ are precisely those tuples $\vec B$ satisfying (ii) in the previous fact.

 \begin{lem}\label{ClaimDef}
       For any $L$-structure $\cal M$, the formula $\Phi(\vec{A},\vec{Y})$ defines a definable family of definable subsets of $\mbor$.  Moreover, the modulus for $\Phi$ is computable and independent of the choice of $\cal M$.
    \end{lem}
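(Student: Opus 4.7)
The plan is: given any tuple $\vec A$ in $\mathcal B$ and $\vec f\in\mathcal K$ with $\Phi(\vec A,\vec f)<\delta$, explicitly produce a tuple $\vec A'$ in the zeroset of $\Phi(\cdot,\vec f)$ whose distance to $\vec A$ is bounded by a constant multiple of $\delta$ depending only on $n$. Write $E_i:=\llbracket\exists x\,\theta_i(x,\vec f)\rrbracket\in\mathcal B$. A preliminary observation is that validity of $\theta_1\vee\cdots\vee\theta_n$ forces $\bigcup_i E_i=\top$: for actual random variables $\vec f$ this is immediate from the definition of $\llbracket\cdot\rrbracket$ (every $\omega$ contributes via some $i$), and hence for arbitrary $\vec f\in\mathcal K$ by continuity of the Boolean operations, the event symbols, and $\mu$. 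The inequality $\Phi(\vec A,\vec f)<\delta$ then decomposes into three usable facts: $\mu(A_i\setminus E_i)<\delta$ for each $i$, the total overlap $\sum_i\mu(A_i)-\mu(\bigcup_i A_i)<\delta$, and the deficit $1-\mu(\bigcup_i A_i)<\delta$.

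The correction proceeds in three stages, each expressible purely in the Boolean algebra $\mathcal B$. First, cut: set $B_i:=A_i\cap E_i$, so that $B_i\subseteq E_i$ and $d(A_i,B_i)<\delta$. Second, disjointify: set $B'_i:=B_i\setminus\bigcup_{j<i}B_j$, so that the $B'_i$ are pairwise disjoint and still contained in $E_i$; using $\mu(\bigcup B_i)\geq \mu(\bigcup A_i)-n\delta$, one gets the telescoping bound $\sum_i\mu(B_i\setminus B'_i)=\sum_i\mu(B_i)-\mu(\bigcup_i B_i)<(n+1)\delta$. Third, fill: let $F:=\top\setminus\bigsqcup_i B'_i$, which has $\mu(F)<(n+1)\delta$, and partition $F=\bigsqcup_i F_i$ with $F_i:=(F\cap E_i)\setminus\bigcup_{j<i}(F\cap E_j)\subseteq E_i$ (possible since $F\subseteq\top=\bigcup_i E_i$). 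Set $A'_i:=B'_i\sqcup F_i$. By construction the $A'_i$ are pairwise disjoint, their union is $\top$, and $A'_i\subseteq E_i$, so $\Phi(\vec A',\vec f)=0$. Combining the three estimates yields $d(A_i,A'_i)\leq\delta+(n+1)\delta+(n+1)\delta=(2n+3)\delta$ in each coordinate, so the choice $\Delta(\epsilon):=\epsilon/(2n+3)$ is a modulus of the required form; it is trivially computable, and since the whole argument uses $\mathcal M$ only through the validity of $\theta_1\vee\cdots\vee\theta_n$, the modulus is independent of $\mathcal M$.

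The main point requiring care is the passage from the pre-structure to the completed metric structure: both the defining identity $\bigcup_i E_i=\top$ and the three measure-theoretic stages must be interpreted for arbitrary $\vec A\in\mathcal B$ and arbitrary $\vec f\in\mathcal K$, not just for Borel subsets of $[0,1)$ and Borel-measurable functions. This is handled uniformly by noting that each operation used (the $\llbracket\cdot\rrbracket$ symbols, Boolean operations in $\mathcal B$, and $\mu$) is a part of the two-sorted language $L^R$ and is continuous, so identities valid on the dense pre-substructure extend automatically; apart from this continuity issue, the construction is entirely elementary.
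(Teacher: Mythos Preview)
Your proof is correct, and it is both cleaner and more elementary than the paper's.  The paper proceeds in the opposite order: it first repairs the ``partition'' defect by splitting the overlaps $A_i\cap A_j$ and the leftover $\neg\bigcup_i A_i$ into \emph{fair} pieces of equal measure (thereby implicitly invoking atomlessness of $\mathcal B$), obtaining a genuine partition $\vec V$ close to $\vec A$; only then does it repair the containment defect by excising $V_i\cap\llbracket\neg\exists x\theta_i\rrbracket$ and redistributing those pieces among the $V_j$ according to which $E_j$ they meet.  Your three-step ``cut, disjointify, fill'' argument handles the two defects in the opposite order and uses only greedy Boolean constructions (no splitting of sets into prescribed-measure pieces), so it works verbatim in any probability algebra and yields the sharper modulus $\Delta(\epsilon)=\epsilon/(2n+3)$ against the paper's $n\epsilon/6(n+1)^2$.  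The continuity remark you make for passing from random variables $\vec f$ to arbitrary $\vec f\in\mathcal K$ is exactly the step the paper also waves through (``we leave it to the reader\ldots by a simple continuity argument''), and in your version it is needed only to secure $\bigcup_i E_i=\top$, since the rest of your construction lives entirely in $\mathcal B$.
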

    \begin{proof}
    Given $\epsilon>0$, we will show that that, for any tuple of events $\vec E$ and tuple of random variables $\vec f$, if $$\Phi(\vec E,\vec f)<n\epsilon/6(n+1)^2,$$ then $d\left(\vec E,Z(\Phi(\cdot,\vec f))\right)<\epsilon.$  We leave it to the reader to check that, in what follows, we may assume, without loss of generality, that $\vec f$ is a tuple of random variables; indeed, this follows from a simple continuity argument.

     \textbf{Step 1:} Fix $\delta>0$ and take $\eta<(n/n+1)\delta$.  For any $\eta$-partition $\vec E$, that is, a tuple $\vec E$ such that $$\max\left(\sum_{i=1}^m\mu(E_i)\dot-\mu(\bigcup_{i=1}^m E_i),1\dot-\mu(\bigcup_{i=1}^m E_i)\right)<\eta,$$ there is a partition $\vec V$ such that $d(\vec E,\vec V)<\delta$.

     To establish step 1, let $N=2^n$ and for each $k=1,...,N$, consider all the possible events given by 
     $$E_1^{k(1)}\cap...\cap E_n^{k(n)},$$
     where each $k:\{1,...,n\}\to\{0,1\}$, $E_i^0=\neg E_i$, and $E_i^1=E_i$. For each $k=1,...,N,$ set 
     $$\Gamma(k)=\{i\in\{1,...,n\}\ : \ k(i)=1\}.$$
     If $\Gamma(k)\neq \emptyset$, let $\{V_i^k\}_{i\in\Gamma(k)}$ be a ``fair'' (measurable) partition of $E_1^{k(1)}\cap...\cap E_n^{k(n)}$, that is, for each $i\in \Gamma(k)$, we have 
     $$\mu(V_i^k)=\dfrac{\mu(E_1^{k(1)}\cap...\cap E_n^{k(n)})}{|\Gamma(k)|}.$$
     If $\Gamma(k)=\emptyset$, let $W_1,...,W_n$ be a fair partition of $E_1^0\cap...\cap E_n^0$. For each $i=1,\ldots, n$, define 
     $$V_i=\left(\bigcup_{\{k\  \ : \ i\in \Gamma(k)\}}V_i^k\right)\cup W_i.$$
     By construction, $\vec V$ form a partition of $\top$. We now show that $d(\vec E,\vec V)<\delta.$ First note that, by construction, for each $i=1,...,n$, we have $V_i\backslash W_i\subset E_i$ and $E_i\cap W_i=\emptyset $, and thus
     $$d(E_i,V_i)=\mu(E_i\backslash(V_i\backslash W_i))+\mu(W_i) $$
     $$ =\mu\left(E_i\backslash\bigcup_{\{k\ |\ i\in \Gamma(k)\}}V_i^k\right)+\mu(W_i).$$
     Since $1-\mu(\bigcup_{i=1}^n E_i)<\eta$, then $\mu(W_i)<\eta/n$. In the same way, since 
     $$E_i\backslash\bigcup_{\{k\ |\ i\in \Gamma(k)\}}V_i^k\subset\bigcup_{\ell=1,\ell\neq i}^n(E_i\cap E_\ell)\subset\bigcup_{i\neq j}(E_i\cap E_j)$$
     and $$\mu\left(\bigcup_{i=1}^n E_i\right)+\mu\left(\bigcup_{i\neq j}(E_i\cap E_j)\right)\leq \sum_{i=1}^n\mu(E_i),$$ we have $\mu\left(\bigcup_{i\neq j}^n(E_i\cap E_j)\right)<\eta$ and so $$\mu\left(E_i\backslash\bigcup_{\{k\ |\ i\in \Gamma(k)\}}V_i^k\right)<\eta.$$
     Consequently, 
     $$d(E_i,V_i)<\eta+\eta/n<\delta,$$
     and $d(\vec E,\vec V)<\delta$ as desired. 

     \textbf{Step 2:} Fix $\epsilon>0$, take $\delta<\epsilon/4(n+1)$ and $\eta<n/(n+1)\delta$.  Let $\vec E$ be such that $\Phi(\vec E,\vec f)<\eta$. Since $\vec E$ is an $\eta$-partition, by Step 1, there is a partition $\vec V$ such that $d(\vec E,\vec V)<\delta$. We now show how to find $\vec B$ such that $\Phi(\vec B,\vec f)=0$ and $d(\vec V,\vec B)< \epsilon/2$, whence $d(\vec E,\vec B)<\epsilon$, as desired.

     First note that, since for each $i=1,...,n$, $\mu(E_i)\dot-\mu(E_i\cap\llbracket\exists x\theta_i(x,\vec f)\rrbracket)<\eta$, we have:
     $$\mu(V_i\cap \llbracket\neg\exists x\theta_i(x,\vec f)\rrbracket)=\mu((V_i\cap E_i)\cap \llbracket\neg\exists x\theta_i(x,\vec f)\rrbracket)+\mu((V_i\setminus E_i)\cap \llbracket\neg\exists x\theta_i(x,\vec f)\rrbracket)<\eta+\delta.$$

     
     Now for each $i=1,...,n$, set 
     $$\Sigma(i)=\{ j\in\{1,...,n\}\ |\ (V_i\cap\llbracket\neg\exists x\theta_i(x,\vec f)\rrbracket)\cap\llbracket\exists x\theta_j(x,\vec f)\rrbracket\neq \emptyset\}.$$
     If $\Sigma(i)\neq\emptyset$, let $\{B_j^i\}_{j\in\Sigma(i)}$ be a partition of $V_i\cap \llbracket\neg\exists x\theta_i(x,\vec f)\rrbracket$ such that, for all $j\in \Sigma(i)$, we have $B_j^i\subseteq \llbracket\exists x\theta_j(x,\vec f)\rrbracket$.  For $k=1,\ldots,n$, set
     $$B_k:=(V_k\cap\llbracket\exists x\theta_k(x,\vec f)\rrbracket)\cup\bigcup_{\{i\ |\ k\in\Sigma(i)\}}B_k^i. $$
     By construction, $\Phi(\vec B,\vec f)=0$. Now, note that for each $j=1,...,n$, we have
     $$d(V_j,B_j)=\mu(V_j)\dot-\mu(V_j\cap\llbracket\exists x\theta_j(x,\vec f)\rrbracket)+\sum_{\{i\ |\ j\in\Sigma(i)\}}\mu(B_j^i) $$
     $$ <(\delta+\eta)+n(\delta+\eta)<2(n+1)\delta<\epsilon/2$$
     and thus $d(\vec V,\vec B)<\epsilon/2$, as desired. 
    \end{proof}



\begin{proof}[Proof of Theorem \ref{randomQE}]
We apply the test from Lemma \ref{QEtest}.  Towards that end, suppose we are given an existential restricted $L^R$-formula $\varphi(\vec X,\vec A)$, where $\vec X$ is a tuple of variables of sort $\cal K$ and $\vec A$ is a tuple from sort $\cal B$.  If $\varphi$ has the form $\inf_C \psi(\vec X,\vec A,C)$, where $C$ ranges over $\cal B$, then by (the proof of) effective quantifier-elimination for atomless probability algebras, we can effectively approximate $\varphi$ by quantifier-free restricted formulae.

We thus assume that $\varphi$ is an existential formula in $L^R$ of the form 
    $$\displaystyle\inf_X\psi_1(X,\vec{Y},\vec{A}),$$
    where $\psi_1$ is a quantifier-free formula in $L^R$ and $X,\vec{Y}$ are variables in sort $\mathcal{K}$ and $\vec{A}$ are variables in sort $\mathcal{B}$. The variable $X$ occurs in $\psi_1$ in finitely many terms $\llbracket\varphi_i(X,\vec{Y})\rrbracket$, $i\leq m$ of sort $\mathcal{B}$, where each $\varphi_i(x,\vec{y})$ is an $\mathcal{L}$ formula. Let $\theta_j(x,\vec{y})$ for $j\leq n=2^m$ be the list of all possible  conjunctions
    $$\varphi_1^{k(1)}\wedge...\wedge\varphi_m^{k(m)}$$
    where each $k:\{1,...,n\}\to\{0,1\}$, $\varphi_i^0=\neg\varphi_i$ and $\varphi_i^1=\varphi_i$ for all $i\leq m$. So using Boolean Axioms, the formula $\psi_1$ is equivalent  to a quantifier-free $\psi_2$ in which the variable $X$ occurs only in terms $\llbracket\theta_j(X,\vec{Y})\rrbracket$. Let $\psi_3(\vec Y,\vec A,\vec B)$ be quantifier-free formula obtained from $\psi_2$ by replacing each term $\llbracket\theta_j(X,\vec{Y})\rrbracket$ by a new variable $B_j$ of sort $\mathcal{B}$. 
    
    Note that the variable $X$ does no occurs in $\psi_3$, and now $\varphi$ is equivalent to 
    $$\inf\{\psi_3(\vec{Y},\vec{A},\vec{B}) \ : \ \Phi(\vec B,\vec Y)=0\}.$$  By Proposition \ref{defprop}, this latter expression is effectively approximable by formulae of the form $\inf_{\vec B}\chi(\vec Y,\vec A,\vec B)$ with $\chi$ a restricted quantifier-free formula.  However, we already know from the first case that such existential formulae can themselves be effectively approximated by quantifier-free restricted formulae, finishing the proof of the theorem.
\end{proof}

\end{document}